\newcommand{\E}{\mathbb{E}}
\DeclareMathOperator{\Cov}{Cov}
\DeclareMathOperator{\vol}{vol}
\DeclareMathOperator{\Span}{Span}
\DeclareMathOperator{\Var}{Var}
\DeclareMathOperator{\Aut}{Aut}
\theoremstyle{plain}
\newtheorem{theorem}{Theorem}[section]
\newtheorem{lemma}[theorem]{Lemma}
\newtheorem{proposition}[theorem]{Proposition}
\theoremstyle{definition}
\newtheorem{definition}[theorem]{Definition}
\newtheorem{example}[theorem]{Example}
\theoremstyle{remark}
\newtheorem{remark}[theorem]{Remark}
\title{Noise sensitivity and noise stability for Markov chains: Existence results}
\author{Malin Palö Forsström}
\date{\today}
\begin{document}

\maketitle

\begin{abstract}
During the past 15 years, several extensions of the concepts noise sensitivity and noise stability, first coined in~\cite{schramm2000}, has been studied. The purpose in this paper is to give definitions of this concepts in the setting of continuous time Markov chains, which then unifies many of the previously considered generalizations. In addition, a considerable amount of time is spent on proving the existence of sequences of noise stable and nondegenerate functions with respect to various classes of Markov chains, a problem which interestingly will appear to have close connections to the so called localization of eigenvectors, a problem which in the setting of random graphs has recently been given a lot of attention.
\end{abstract}

\vspace{4em}
\setcounter{secnumdepth}{1}
\setcounter{tocdepth}{1}
\tableofcontents

\section{Introduction}

In~\cite{schramm2000}, Benjamini, Kalai and Schramm coined the term noise sensitivity, looking at how likely the occurance of events where to differ at the starting point and the ending point of sequences of continuous time  random walks on Hamming cubes. Since this paper was published, several extensions of this model, as well as similar definitions in slightly different settings, have been studied, including changing the random walk into an exclusion processes on the Hamming cube \cite{bgs2013} or into Brownian motion on \( \mathbb{R}^n\)  \cite{kms2012b,kms2012a}, as well as trying to understand how the definitions can be applied in the context of functions defined on the leaves of binary trees~\cite{st1999}. Several of the results for the Hamming cube case, which are proven using the theory developed from this research, have also been extended to other settings, such as to the symmetric group or to slices of Hamming cubes~\cite{eff2012, eff2013b, eff2013a, f2014}.

Our main goal of this paper will be to propose a definition of noise sensitivity for general Markov chains and to show that this definition preserves many of the properties from the original setting. 
Interestingly, we will see that some of the questions that arise will have connections to the so called localization of eigenvectors studied recently in eg.~\cite{ab2014},~\cite{dll2011},~\cite{dp2012} and~\cite{esy2009}.

Throughout this paper, we will  be concerned with sequences \( ( X^{(n)})_{n \geq 1} \) of reversible and irreducible continuous time Markov chains \( X^{(n)} \). For each \(n \geq 1\), let \( S^{(n)} \) be the state space, \( Q_n = (q_{ij}^{(n)})_{i,j \in S^{(n)}}\) be the generator and \( \pi_n \) be the stationary distribution of \( X^{(n)} \). We write \( X_t^{(n)} \) to denote the position of \( X^{(n)} \) at time \( t \in \mathbb{R}_+ \), and will always assume that \( X_0^{(n)} \) has been choosen according to \( \pi_n \).

Next, for all \( n \geq 1 \) and \( t \geq 0 \), let \( \smash{H_t^{(n)}}  \)  denote the continuous time Markov semigroup for the Markov chain given by 
\[
H_t^{(n)} = \exp(tQ_n) .
\]
In other words, \( H_t^{(n)} \) operates on a function \( f \) with domain \( S^{(n)} \) by
\[
H_t^{(n)} f(w) = \E[f(X_t^{(n)}) \mid X_0^{(n)} = w].
\]

For functions \( f \) and \( g  \) with domain \( S^{(n)} \), we will use the inner product 
\[
 \langle f , g \rangle = \langle f,g \rangle_{\pi_n}= \E [ f(w)g(w) ].
\]
As \( X^{(n)} \) is assumed to be reversible and irreducible, we can find a set, \( \{ \psi_ j^{(n)}\}_j \) of eigenvectors to \( { -Q_n} \), with corresponding eigenvalues 
\begin{equation}
0 = \lambda_0^{(n)} < \lambda_1^{(n)} \leq \lambda_2^{(n)} \leq \ldots \leq \lambda_{|S^{(n)}|-1}^{(n)}
\label{equation: positive eigenvalues}
\end{equation}
such that  \( \{ \psi_ j^{(n)}\}_j \) is an orthonormal basis with respect to \( \langle \cdot, \cdot \rangle \) for the space of real valued functions on \( S^{(n)} \).
The eigenvectors \( \{ \psi_ j^{(n)}\}_j \) will also be eigenvectors to \( H_t^{(n)} \) with corresponding eigenvalues \( \{ e^{-\lambda_j^{(n)} t} \}_j \).
Since the set \( \{ \psi_ j^{(n)}\}_j \) is an orthonormal basis, for any \( f \colon S^{(n)} \to \mathbb{R} \) we can write
\[
f(w) = \sum_{i=0}^{|S^{(n)}|-1} \langle f, \psi_i^{(n)} \rangle \, \psi_i^{(n)} (w).
\]
To simplify notations, we will write  \( \hat f(j) \) instead of \( \langle f, \psi_j^{(n)} \rangle \).  Note that with this notation, for any function \( f \colon S^{(n)} \to \mathbb{R} \), 
\[
 \E[f] = \E [f \cdot 1] = \langle f, 1 \rangle = \langle f, \psi_0^{(n)} \rangle = \hat f(0) 
\]
and
\begin{equation*}
\begin{split}
\Var(f_n) &= \E[f\cdot f] - \E[f]^2 = \langle f, f \rangle - \hat f(0)^2 
\\&= \left\langle \sum_{i=0}^{|S^{(n)}|-1}  \hat f(i) \psi_i^{(n)} , \sum_{j=0}^{|S^{(n)}|-1} \hat f(j) \psi_j^{(n)} \right\rangle - \hat f(0)^2 
\\&=  \sum_{i=0}^{|S^{(n)}|-1} \sum_{j=0}^{|S^{(n)}|-1}  \hat f(i) \hat f(j) \left\langle \psi_i^{(n)}, \psi_j^{(n)} \right\rangle - \hat f(0)^2
\\&= \sum_{i=0}^{|S^{(n)}|-1} \hat f(i)^2 - \hat f(0)^2= \sum_{i=1}^{|S^{(n)}|-1} \hat f(i)^2
\end{split}
\end{equation*}

The smallest nonzero eigenvalue, \( \lambda_1^{(n)} \), is called the spectral gap of the Markov chain \( X^{(n)} \), and its inverse, \( t_{rel}^{(n)} \coloneqq 1/\lambda_1^{(n)} \) is called the relaxation time. Another characterization of the spectral gap which will be useful for us is
\begin{equation}
\lambda_1^{(n)}
= 
\min_{f \colon \E[f]=0, f \not \equiv 0} \frac{\langle -Q^{(n)}f, f \rangle}{\langle f, f \rangle},
\label{equation: rayleigh quotient}
\end{equation}
where  the minimum is attained by the corresponding eigenvector \( \psi_1^{(n)} \). The right  hand side of~\eqref{equation: rayleigh quotient} is called the Rayleigh quotient of \( -Q^{(n)} \).

We will now define the concepts with which we will be concerned in the rest of these notes.

\begin{definition}
Let \( (X^{(n)})_{n \geq 1} \) be a sequence of reversible and irreducible continuous time Markov chains, with state spaces \( (S^{(n)})_{n \geq 1} \) and stationary distributions \( (\pi_n)_{n \geq 1} \), and let \( t_{rel}^{(n)} \) be the relaxation time of the \( n\)th Markov chain. A sequence \( (f_n )_{n \geq 1} \)  of Boolean functions, \( f_n \colon S^{(n)} \to \{ 0,1 \} \), is said to be \emph{noise sensitive} with respect to \( (X^{(n)})_{n \geq 1} \) , if for all \( \alpha > 0 \)
\begin{equation}
\lim_{n \to \infty} \Cov \left(f_n(X_0^{(n)}), f_n(X_{\alpha t_{rel}^{(n)}}^{(n)})\right) = 0.
\label{def: possible definition rt}
\end{equation}
\label{definition: noise sensitivity}
\end{definition}

Note that as \(\E \left[ f_n(X_0^{(n)}) \right] = \E \left[ f_n(X_{\alpha t_{rel}^{(n)}}^{(n)}) \right]\), we have that
\[
\Cov(f_n(X_0^{(n)}), f_n(X_{\alpha t_{rel}^{(n)}}^{(n)})
= 
 \E \left[ f_n(X_0^{(n)}) f_n(X_{\alpha  t_{rel}^{(n)}}^{(n)}) \right] - \E \left[ f_n(X_0^{(n)}) \right]^2 .
\]

In addition to the definition for noise sensitivity given above, we will use the following definition of \emph{noise stability}, which captures the opposite behaviour.
\begin{definition}
Let \( (X^{(n)})_{n \geq 1} \) be a sequence of reversible and irreducible continuous time Markov chains, with state spaces \( (S^{(n)})_{n \geq 1} \) and stationary distributions \( (\pi_n)_{n \geq 1} \), and let \( t_{rel}^{(n)} \) be the relaxation time of the \( n\)th Markov chain. A sequence \( (f_n )_{n \geq 1} \)   of Boolean functions, \( f_n \colon S^{(n)} \to \{ 0,1 \} \), is said to be \emph{noise stable} with respect to \( (X^{(n)})_{n \geq 1} \) if 
\begin{equation}
\lim_{\alpha \to 0} \sup_n P \left( f_n(X_0^{(n)}) \not = f_n(X_{\alpha t_{rel}^{(n)}}^{(n)}) \right) = 0.
\end{equation}
\end{definition}

As we will see later, if \( (X^{(n)})_{n \geq 1} \) is a sequence of reversible and irreducible continuous time Markov chains and  \( (f_n)_{n\geq 1} \) is a sequence of Boolean functions with domain \( S^{(n)} \)  such that \( \lim_{n\to \infty} \Var(f_n) = 0 \), then \( (f_n)_{n\geq 1} \) will be both noise stable and noise sensitive with respect to \( (X^{(n)})_{n \geq 1} \). For this reason, we are only interested in sequences of Boolean functions with
\[
\lim_{n\to \infty} \Var(f_n) > 0.
\]
If this is satisfied for  a sequence \( (f_n)_{n\geq 1} \) of Boolean functions, we say that \( (f_n)_{n\geq 1} \) is \emph{nondegenerate}.

\begin{remark}
In~\cite{schramm2000}, the concept of noise sensitivity was defined as follows. Given \( {X_0^{(n)} = (X_0^{(n)}(1), \ldots, X_0^{(n)}(n)) \in \{ 0,1 \}^n }\), let  \( {\tilde X_\alpha^{(n)} = (\tilde X_\alpha^{(n)}(1), \ldots, \tilde X_\alpha^{(n)}(n)) \in \{ 0,1 \}^n} \) be a random perturbation of \( X_0^{(n)} \), i.e. for every \( j \in \{1, \ldots , n\} \) independently, set \( {\tilde X_\alpha^{(n)}(j) = X_0^{(n)}(j)}\) with probability \( 1-\alpha \), and \( \tilde X_\alpha^{(n)}(j)) = 1-X_0^{(n)}(j)\)  else. A sequence \( (f_n)_{n \geq 1 } \), \( f_n\colon \{ 0,1 \}^n \to 0\), was said to be \emph{asymptotically noise sensitive} if for all \( \alpha > 0 \), 
\[
\lim_{n \to \infty} \Cov(f_n(X_0^{(n)}), f_n(\tilde X_{\alpha}^{(n)})= 0.
\]

Now consider the continuous time Markov chain \( X^{(n)} \) on \( \{ 0,1 \}^n \) which for each coordinate \( i \), at times that are exponentially distributed with parameter \( 1 \), rerandomizes the value at \( i \) by setting the value to 1 with probability \( 1/2 \) and 0 with probability \( 1/2 \). This Markov chain has relaxation time \( 1 \), and for any specific coordinate \( i \), the probability that the value at \( i \) is not the same at time \( 0 \) as at time \( \alpha t_{rel}^{(n)} = \alpha \) is  \( (1-e^{-\alpha })/2 \).
This implies that the law of \( \smash{X_{\alpha t_{rel}^{(n)}  }^{(n)}} \) is the same as the law of \( \tilde X^{(n)}_{(1-e^{-\alpha})/2} \), why Definition~\ref{definition: noise sensitivity} in this special case is equivalent with the definition of asymptotic noise sensitivity given in~\cite{schramm2000}.

Comparing Definition~\ref{definition: noise sensitivity} with the definition of \emph{complete graph noise sensitivity} given  in~\cite{bgs2013}, the two definitions coincide if we consider the continuous time Markov chain corresponding to a  random walk on the union of the graphs \( G_n \) with vertices \( \{ w \in \{ 0,1 \}^n \colon \| w \| = k  \} \) and an edge between two vertices \( u \) and \( w \) iff \( \| u \| = \| w \| \) and \( \| u - v \| = 2 \), and choose \( \pi_n \) to be the uniform measure on \( \{ 0,1 \}^n \). However, note that as these graphs are not connected, the corresponding Markov chains are not irreducible. We will therefore not consider this model directly in this paper, even though some of our results applies to this model as well.
\end{remark}

Using the eigenvalues of the generator, we will now give another characterization of noise sensitivity, which generalizes the first part of Theorem 1.9 in~\cite{schramm2000}.

\begin{proposition}
A sequence of Boolean functions \( ( f_n )_{n \geq 1} \), \( f_n \colon S^{(n)} \to \{ 0,1 \} \), is noise sensitive with respect to \( (X^{(n)})_{n \geq 1} \)  if and only if for all \( k > 0 \), 
\begin{equation}
\lim_{n \to \infty} \sum_{i\colon \lambda_1^{(n)} \leq \lambda_i^{(n)} < k\lambda_1^{(n)}}  \hat f_n(i)^2 = 0.
\label{eq: epsilon dependency}
\end{equation}
\label{proposition: noise sensitivity fourier}
\end{proposition}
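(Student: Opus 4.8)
The plan is to reduce everything to the spectral expansion of the covariance. First I would record that, for each $n$ and each $t\ge 0$, reversibility makes $H_t^{(n)}$ self-adjoint with the orthonormal eigenbasis $\{\psi_i^{(n)}\}_i$ and eigenvalues $e^{-\lambda_i^{(n)}t}$, so expanding $f_n=\sum_i\hat f_n(i)\psi_i^{(n)}$ gives
\[
\E\!\left[f_n(X_0^{(n)})\,f_n(X_t^{(n)})\right]=\langle f_n,H_t^{(n)}f_n\rangle=\sum_{i=0}^{|S^{(n)}|-1}e^{-\lambda_i^{(n)}t}\,\hat f_n(i)^2 .
\]
Subtracting $\E[f_n]^2=\hat f_n(0)^2$ and taking $t=\alpha t_{rel}^{(n)}=\alpha/\lambda_1^{(n)}$, this yields
\[
\Cov\!\left(f_n(X_0^{(n)}),f_n(X_{\alpha t_{rel}^{(n)}}^{(n)})\right)=\sum_{i=1}^{|S^{(n)}|-1}e^{-\alpha\lambda_i^{(n)}/\lambda_1^{(n)}}\,\hat f_n(i)^2,
\]
which is a sum of nonnegative terms; in particular the covariance is nonnegative, so ``tends to $0$'' is the same as ``has limit superior $0$''. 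From here the whole proposition is a matter of splitting this sum at a threshold.

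For the ``if'' direction, assume~\eqref{eq: epsilon dependency} and fix $\alpha>0$. Given $k>1$ (the values $k\le1$ being vacuous since $\lambda_i^{(n)}\ge\lambda_1^{(n)}$ for $i\ge1$), split the sum above into the indices with $\lambda_i^{(n)}<k\lambda_1^{(n)}$ and the rest. The first group contributes at most $\sum_{i:\lambda_1^{(n)}\le\lambda_i^{(n)}<k\lambda_1^{(n)}}\hat f_n(i)^2$, which tends to $0$ as $n\to\infty$ by hypothesis; the second group contributes at most $e^{-\alpha k}\sum_{i\ge1}\hat f_n(i)^2=e^{-\alpha k}\Var(f_n)\le e^{-\alpha k}$, using that $f_n$ is Boolean so $\Var(f_n)\le1$ uniformly in $n$. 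Hence $\limsup_{n\to\infty}\Cov(\cdots)\le e^{-\alpha k}$ for every $k>1$, and letting $k\to\infty$ forces the limit to be $0$, i.e.\ $(f_n)_{n\ge1}$ is noise sensitive.

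For the ``only if'' direction, assume $(f_n)_{n\ge1}$ is noise sensitive and fix $k>1$. Every index $i$ with $\lambda_1^{(n)}\le\lambda_i^{(n)}<k\lambda_1^{(n)}$ satisfies $e^{-\lambda_i^{(n)}/\lambda_1^{(n)}}\ge e^{-k}$, so, specializing the identity above to $\alpha=1$,
\[
\sum_{i:\lambda_1^{(n)}\le\lambda_i^{(n)}<k\lambda_1^{(n)}}\hat f_n(i)^2\le e^{k}\sum_{i=1}^{|S^{(n)}|-1}e^{-\lambda_i^{(n)}/\lambda_1^{(n)}}\,\hat f_n(i)^2=e^{k}\,\Cov\!\left(f_n(X_0^{(n)}),f_n(X_{t_{rel}^{(n)}}^{(n)})\right),
\]
and the right-hand side tends to $0$ as $n\to\infty$ by noise sensitivity, giving~\eqref{eq: epsilon dependency}. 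I do not expect a genuine obstacle: once the spectral identity for the covariance is in hand, the argument is a one-line two-sided estimate, and the only things to be careful about are the order of the quantifiers ($\alpha$ and $k$ fixed before $n\to\infty$, then $k\to\infty$) and the uniform bound $\Var(f_n)\le1$ from Booleanness that makes the tail term vanish as $k\to\infty$ independently of $n$.
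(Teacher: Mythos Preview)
Your proof is correct and follows the same approach as the paper: you derive the spectral identity $\Cov(f_n(X_0^{(n)}),f_n(X_{\alpha t_{rel}^{(n)}}^{(n)}))=\sum_{i\ge1}e^{-\alpha\lambda_i^{(n)}/\lambda_1^{(n)}}\hat f_n(i)^2$ and then argue the equivalence by splitting at the threshold $k\lambda_1^{(n)}$. The paper's version is terser---after establishing the identity it simply asserts that the equivalence with~\eqref{eq: epsilon dependency} is ``easy to see''---so your explicit two-sided estimate is exactly the intended filling-in of that step.
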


\begin{proof}
Fix \( \alpha > 0 \). Then
\begin{equation*}
\begin{split}
\E \left[ f_n(X_0^{(n)})f_n(X_{\alpha t_{rel}^{(n)}}^{(n)}) \right]
&=
\E \left[f_n(X_0^{(n)})H^{(n)}_{\alpha t_{rel}^{(n)}} f_n(X_0^{(n)}) \right]
\\&=
\E \left[\sum_{i=0}^{|S^{(n)}|-1}\hat f_n(i) \psi_i^{(n)} (X_0^{(n)}) \sum_{j=0}^{|S^{(n)}|-1} \hat f_n(j) H^{(n)}_{\alpha t_{rel}^{(n)}} \psi^{(n)}_j (X_0^{(n)}) \right]
\\&=
\E \left[\sum_{i=0}^{|S^{(n)}|-1}\hat f_n(i) \psi_i^{(n)} (X_0^{(n)}) \sum_{j=0}^{|S^{(n)}|-1} \hat f_n(j) e^{ - \alpha t_{rel}^{(n)} \cdot \lambda_j^{(n)} }\psi^{(n)}_j (X_0^{(n)}) \right]
\\&=
\sum_{i,j} e^{{-\alpha t_{rel}^{(n)}} \cdot \lambda_j^{(n)} } \hat f_n(i) \hat f_n(j)  \E \left[\psi_i^{(n)} (X_0^{(n)}) \psi^{(n)}_j (X_0^{(n)}) \right]
\\&=
\sum_{i=0}^{\mathclap{|S^{(n)}|-1}} e^{{-\alpha \lambda_i^{(n)}/\lambda_1^{(n)}} } \hat f_n(i)^2 .
\end{split}
\end{equation*}
Here the last equality follows from the fact that   \( \{ \psi_i^{(n)} \}_{i = 0}^{|S^{(n)}|-1} \) is an orthonormal set, together with the definition of the relaxation time.
As \(\E \left[ f_n (w) \right]^2 = \hat f_n(0)^2\), it follows that 
\begin{equation}
\begin{split}
\Cov( f_n(X_0^{(n)}), f_n(X_{\alpha t_{rel}^{(n)}}^{(n)}))  
&= 
\E \left[ f_n(X_0^{(n)})f_n(X_{\alpha t_{rel}^{(n)}}^{(n)}) \right] -\E \left[ f_n (X_0^{(n)}) \right]^2  
\\&= 
\sum_{j=1}^{\mathclap{|S^{(n)}|-1}} e^{{-\alpha \lambda_j^{(n)}/\lambda_1^{(n)}} } \hat f_n(i)^2 .
\label{equation: spectral formulation of noise sensitivity}
\end{split}
\end{equation}
For any \( \alpha > 0 \), it is easy to see that the left hand side of~\eqref{equation: spectral formulation of noise sensitivity} tends to zero as \( n \to \infty \) if and only if~\eqref{eq: epsilon dependency} holds. From this the desired conclusion follows.
\end{proof}

\begin{remark}
By the last lines of the proof above it follows that  if a sequence of functions satisfies~\eqref{def: possible definition rt} for one \( \alpha > 0 \), then it does so for all \( \alpha > 0 \), i.e. the proof of Proposition~\ref{proposition: noise sensitivity fourier} in fact shows that a sequence of Boolean functions \( (f_n)_{n \geq 1} \) is noise sensitive with respect to \( (X^{(n)})_{n \geq 1} \) if and only if
\begin{equation*}
\lim_{n \to \infty}\Cov( f_n(X_0^{(n)}), f_n(X_{ t_{rel}^{(n)}}^{(n)}))  = 0.
\end{equation*}
\label{remark: epsilon dep}
\end{remark}

We can easily obtain a similar characterization of noise stability,  which generalizes the second part of Theorem 1.9 in~\cite{schramm2000}.

\begin{proposition}
Let \( (X^{(n)} )_{n\geq 1 } \) be a sequence of   reversible and irreducible continuous time Markov chains. A sequence of Boolean functions \( (f_n)_{n \geq 1} \), \( f_n \colon S^{(n)} \to \{ 0,1 \} \), is noise stable with respect to \( (X_n)_{n\geq 1} \) if and only if for all \( \delta > 0 \) there is \( k \in \mathbb{N} \) such that
\begin{equation}
\sup_{n} \sum_{i \colon \lambda_i^{(n)} \geq k\lambda_1^{(n)}} \hat f_{n}(i)^2 < \delta.
\label{equation: noise stability fourier}
\end{equation}
\label{proposition: noise stability fourier}
\end{proposition}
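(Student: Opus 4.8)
The plan is to reduce the disagreement probability to a spectral sum by exploiting that each $f_n$ is Boolean, and then argue purely real-analytically. First, since $f_n$ takes values in $\{0,1\}$, the event $\{f_n(X_0^{(n)}) \neq f_n(X_{\alpha t_{rel}^{(n)}}^{(n)})\}$ has indicator $(f_n(X_0^{(n)}) - f_n(X_{\alpha t_{rel}^{(n)}}^{(n)}))^2$, so
\[
P\left(f_n(X_0^{(n)}) \neq f_n(X_{\alpha t_{rel}^{(n)}}^{(n)})\right) = \E\left[\left(f_n(X_0^{(n)}) - f_n(X_{\alpha t_{rel}^{(n)}}^{(n)})\right)^2\right].
\]
Expanding the square, using stationarity of $X^{(n)}$ to identify $\E[f_n(X_0^{(n)})^2] = \E[f_n(X_{\alpha t_{rel}^{(n)}}^{(n)})^2] = \langle f_n, f_n\rangle = \sum_i \hat f_n(i)^2$, and inserting the identity $\E[f_n(X_0^{(n)}) f_n(X_{\alpha t_{rel}^{(n)}}^{(n)})] = \sum_i e^{-\alpha\lambda_i^{(n)}/\lambda_1^{(n)}}\hat f_n(i)^2$ already established in the proof of Proposition~\ref{proposition: noise sensitivity fourier}, I obtain the clean formula
\[
P\left(f_n(X_0^{(n)}) \neq f_n(X_{\alpha t_{rel}^{(n)}}^{(n)})\right) = 2\sum_{i \geq 1}\left(1 - e^{-\alpha\lambda_i^{(n)}/\lambda_1^{(n)}}\right)\hat f_n(i)^2,
\]
the $i = 0$ term dropping out since $\lambda_0^{(n)} = 0$. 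Writing $S_n(\alpha)$ for the sum on the right, noise stability is exactly the statement that $\lim_{\alpha\to 0}\sup_n S_n(\alpha) = 0$, and I will also use the uniform a priori bound $\sum_{i\geq 1}\hat f_n(i)^2 = \Var(f_n) \leq 1$.

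For the implication~\eqref{equation: noise stability fourier} $\Rightarrow$ noise stability, fix $\delta > 0$ and pick $k$ with $\sup_n \sum_{i:\lambda_i^{(n)}\geq k\lambda_1^{(n)}}\hat f_n(i)^2 < \delta$. Splitting $S_n(\alpha)$ according to whether $\lambda_i^{(n)} < k\lambda_1^{(n)}$ or not, I bound the low-frequency part using $1 - e^{-x}\leq x$, namely $1 - e^{-\alpha\lambda_i^{(n)}/\lambda_1^{(n)}} \leq \alpha\lambda_i^{(n)}/\lambda_1^{(n)} < \alpha k$, which together with the a priori bound contributes at most $\alpha k$; and I bound the high-frequency part by $\sum_{i:\lambda_i^{(n)}\geq k\lambda_1^{(n)}}\hat f_n(i)^2 < \delta$. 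Hence $\sup_n S_n(\alpha) \leq \alpha k + \delta$ for all $\alpha > 0$, so $\limsup_{\alpha\to 0}\sup_n S_n(\alpha) \leq \delta$; since $\delta$ was arbitrary, $\sup_n S_n(\alpha)\to 0$.

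For the converse, fix $\delta > 0$; by noise stability choose $\alpha > 0$ with $\sup_n S_n(\alpha) < \delta/2$. For any $k\in\mathbb{N}$ and any index $i$ with $\lambda_i^{(n)}\geq k\lambda_1^{(n)}$ one has $1 - e^{-\alpha\lambda_i^{(n)}/\lambda_1^{(n)}} \geq 1 - e^{-\alpha k}$, hence
\[
S_n(\alpha) \geq \left(1 - e^{-\alpha k}\right)\sum_{i:\lambda_i^{(n)}\geq k\lambda_1^{(n)}}\hat f_n(i)^2 ,
\]
so $\sum_{i:\lambda_i^{(n)}\geq k\lambda_1^{(n)}}\hat f_n(i)^2 \leq S_n(\alpha)/(1 - e^{-\alpha k})$. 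Choosing $k$ large enough that $1 - e^{-\alpha k}\geq 1/2$ then gives $\sup_n \sum_{i:\lambda_i^{(n)}\geq k\lambda_1^{(n)}}\hat f_n(i)^2 \leq 2\sup_n S_n(\alpha) < \delta$, which is~\eqref{equation: noise stability fourier}.

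I do not expect a genuine obstacle here: the entire content is the spectral identity above, after which both directions are one-line estimates on $1 - e^{-x}$ (the bound $1 - e^{-x}\leq x$ near $0$ for one direction, monotonicity and a lower bound on the tail for the other). The only point requiring care is the bookkeeping of quantifiers and uniformity in $n$ — in particular making sure that the choices of $k$ and $\alpha$ in each direction depend only on $\delta$ and on the uniform bound $\Var(f_n)\leq 1$, never on $n$ — which parallels the remark following Proposition~\ref{proposition: noise sensitivity fourier} that the normalization constants are immaterial.
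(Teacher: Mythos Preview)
Your proof is correct and follows essentially the same approach as the paper: derive the spectral identity $P(f_n(X_0^{(n)}) \neq f_n(X_{\alpha t_{rel}^{(n)}}^{(n)})) = 2\sum_{i\geq 1}(1-e^{-\alpha\lambda_i^{(n)}/\lambda_1^{(n)}})\hat f_n(i)^2$ and then split at the threshold $k\lambda_1^{(n)}$. The only cosmetic differences are that you bound the low-frequency part via $1-e^{-x}\leq x$ whereas the paper uses $1-e^{-\alpha k_\delta}\to 0$ directly, and that you prove the converse directly (choose $\alpha$, then $k$) while the paper argues by contrapositive with the specific choice $k=\alpha^{-1}$; both routes are equivalent.
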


\begin{remark}
Note that the proposition above directly implies that if \( ( X^{(n)} )_{n\geq 1 } \) is a  sequence of Markov chains corresponding to random walks on a family of expander graphs, i.e. a sequence of graphs where the correspondinging sequence of spectral gaps is bounded from below, then all sequences of Boolean functions on \( (S^{(n)})_{n \geq 1} \) will be noise stable with respect to \(( X_n)_{n \geq 1 } \).
\label{remark: expander graphs}
\end{remark}

\begin{proof}
First note that since \( f_n \) is Boolean, we have that
\begin{equation*}
\begin{split}
P\left( f_n(X^{(n)}_{\alpha t_{rel}^{(n)}}) \not = f_n(X^{(n)}_0)\right)
&= 
\E \left[ f_n(X^{(n)}_{\alpha t_{rel}^{(n)}}) (1-f_n(X^{(n)}_0)) \right] + \E \left[ f_n(X^{(n)}_0) (1-f_n(X^{(n)}_{\alpha t_{rel}^{(n)}}))\right]
\\&= 2 \left( \E \left[ f_n(X^{(n)}_0) \right]  - \E \left[f_n(X^{(n)}_0)  f_n(X^{(n)}_{\alpha t_{rel}^{(n)}}) \right] \right) .
\end{split}
\end{equation*}
Using this, as well as the the proof of the previous proposition, we obtain
\begin{equation*}
\begin{split}
P \left( f_n(X_0^{(n)}) \not = f_n(X_{\alpha t_{rel}^{(n)}}^{(n)}) \right)
&=
2 \left(\E \left[ f_n(X^{(n)}_0) \right]  -  \sum_{i=0}^{|S^{(n)}|-1} e^{-\alpha \lambda_i^{(n)}/\lambda_1^{(n)}} \hat f_n(i)^2\right)
\\&=
2 \left(\E \left[ f_n(X^{(n)}_0)^2 \right]  -  \sum_{i=0}^{|S^{(n)}|-1} e^{-\alpha \lambda_i^{(n)}/\lambda_1^{(n)}} \hat f_n(i)^2\right)
%\\&=
%2 \left( \langle f_n, f_n\rangle  -  \sum_i e^{-\alpha \lambda_i^{(n)}/\lambda_1^{(n)}} \hat f(i)^2\right)
\\&=
2 \left(\sum_{i=0}^{|S^{(n)}|-1} \hat f(i)^2  -  \sum_{i=0}^{|S^{(n)}|-1} e^{-\alpha \lambda_i^{(n)}/\lambda_1^{(n)}} \hat f_n(i)^2\right)
\\&=
2 \sum_{i=0}^{|S^{(n)}|-1}\left(1-e^{-\alpha \lambda_i^{(n)}/\lambda_1^{(n)}} \right) \hat f_n(i)^2.
\end{split}
\end{equation*}

For the \emph{if} direction of the proof, suppose that there for any any \( \delta > 0 \) is \( k_\delta \geq 1\) such that
\begin{equation*}
\sup_{n} \sum_{i \colon \lambda_i^{(n)} \geq k_\delta \lambda_1^{(n)}} \hat f_{n}(i)^2 < \delta.
\end{equation*}
Then for all \( \delta > 0 \),
\begin{equation*}
\begin{split}
\lim_{\alpha \to 0 } \sup_n P\left( f_n(X^{(n)}_{\alpha t_{rel}^{(n)}}) \not = f_n(X^{(n)})\right)
&=
2\lim_{\alpha \to 0 } \sup_n  \sum_{i=0}^{|S^{(n)}|-1} \left(1-e^{-\alpha \lambda_i^{(n)}/\lambda_1^{(n)}} \right) \hat f_n(i)^2
\\&\leq
2\delta + 2\lim_{\alpha \to 0 } \sup_n  \sum_{i \colon \lambda_i^{(n)} < k_\delta \lambda_1^{(n)}} \left(1-e^{-\alpha k_\delta \lambda_1^{(n)}/\lambda_1^{(n)}} \right) \hat f_n(i)^2
\\&=
2\delta +2\lim_{\alpha \to 0 }   \sup_n  \sum_{i \colon \lambda_i^{(n)} < k_\delta \lambda_1^{(n)}} \left(1-e^{-\alpha k_\delta } \right) \hat f_n(i)^2
\\&\leq
2\delta +2\lim_{\alpha \to 0 }  \left(1-e^{-\alpha k_\delta } \right) 
\\&= 2\delta.
\end{split}
\end{equation*}
As \( \delta \) can be chosen to be arbitrarily small, this implies that \( (f_n)_{n \geq 1} \) is noise stable with respect to \( (X_n)_{n\geq 1} \).

For the \emph{only if} direction, suppose that there is \( \delta > 0 \) such that for all \( k \geq 1\),
\begin{equation*}
\sup_{n} \sum_{i \colon \lambda_i^{(n)} \geq k\lambda_1^{(n)}} \hat f_{n}(i)^2 \geq \delta
\end{equation*}
for all \( k>0\). Then in particular, this is true for \( k = \alpha^{-1} \). This implies that
\begin{equation*}
\begin{split}
\lim_{\alpha \to 0 } \sup_{n} P \left( f_n(X_0^{(n)}) \not = f_n(X_{\alpha t_{rel}^{(n)}}^{(n)}) \right)
&=
2 \lim_{\alpha \to 0 } \sup_{n}    \sum_{i=0}^{|S^{(n)}|-1} \left(1-e^{-\alpha \lambda_i^{(n)}/\lambda_1^{(n)}} \right) \hat f_n(i)^2
\\&\geq
2 \lim_{\alpha \to 0 } \sup_{n}    \sum_{i \colon \lambda_i^{(n)} \geq k \lambda_1^{(n)}} \left(1-e^{-\alpha \lambda_i^{(n)}/\lambda_1^{(n)}} \right) \hat f_n(i)^2
\\&\geq
2 \lim_{\alpha \to 0 } \sup_{n}    \sum_{i \colon \lambda_i^{(n)} \geq k \lambda_1^{(n)}} \left(1-e^{-\alpha k} \right) \hat f_n(i)^2
\\&=
2 \lim_{\alpha \to 0 } (1-e^{-1}) \delta.
\end{split}
\end{equation*}
In particular, \( (f_n)_{n \geq 1} \) cannot be noise stable.
\end{proof}

\begin{remark}
The proof of Proposition~\ref{proposition: noise sensitivity fourier} shows that Definition~\ref{definition: noise sensitivity} is sharp in the following sense. Let \( ( T_n)_{n \geq 1} \) be any sequence such that   \(\lim_{n \to \infty} T_n / t_{rel}^{(n)} = \infty \). Then for any sequence \( (f_n)_{n \geq 1}\) of Boolean functions with domain \( S^{(n)} \), and any \( \alpha > 0 \), 
\begin{equation}
\lim_{n \to \infty} \Cov(f_n(X_0^{(n)}), f_n(X_{\alpha T_n}^{(n)}) = 0
\end{equation}
as we, by the same method as the one used in this proof, can show that
\begin{equation*}
\begin{split}
\Cov(f_n(X_0^{(n)}), f_n(X_{\alpha T_n}^{(n)}) 
&= \sum_{i=1}^{\mathclap{|S^{(n)}|-1}} e^{{-\alpha \lambda_i^{(n)}T_n} } \hat f_n(i)^2
\leq  
e^{{-\alpha \lambda_1^{(n)}T_n} } \sum_{i=1}^{\mathclap{|S^{(n)}|-1}} \hat f_n(i)^2 
\\&\leq 
e^{{-\alpha \lambda_1^{(n)}T_n} } \langle f, f  \rangle 
=
e^{{-\alpha \lambda_1^{(n)}T_n} } \E [f^2]
\leq 
e^{{-\alpha \lambda_1^{(n)}T_n} }
\end{split}
\end{equation*}
and this tends to zero as \( n \to \infty \). The sequence of relaxation times is thus the largest sequence of times we can look at and still possibly obtain a nontrivial definition of noise sensitivity. By a similar argument, we find that for any  \( \alpha > 0 \) and sequence \( ( T_n)_{n \geq 1} \)  such that \( {\lim_{n\to \infty} T_n / t_{rel}^{(n)} = \infty} \), then for all sequences \( (f_n)_{n\geq 1} \) of Boolean functions, 
\begin{equation*}
\begin{split}
P(f_n(X_0^{(n)}) \not = f_n(X_{\alpha T_n}^{(n)})) 
&= 
2 \sum_{i=0}^{|S^{(n)}|-1} \left(1-e^{-\alpha \lambda_i^{(n)}T_n} \right) \hat f_n(i)^2 
\\&\geq 
2 \left(1-e^{-\alpha \lambda_1^{(n)}T_n} \right)\sum_{i \in S^{(n)} \colon i \geq 1}  \hat f_n(i)^2  
\\&\geq 
2 \Var(f_n)
\end{split}
\end{equation*}
for all large enough \( n \). This implies that if we replaced \( t_{rel}^{(n)} \) by any sequence \( (T_n)_{n \geq 1} \) with \( \lim_{n\to \infty} T_n / t_{rel}^{(n)} = \infty\) in the definition of noise stability, then no {nondegenerate} sequence of Boolean functions  would be noise stable with respect to \( (X^{(n)})_{n \geq 1} \).
\end{remark}

\begin{remark}
From the fact that \( \Var(f_n) = \sum_{i = 1}^{|S^{n}|-1} \hat f(i)^2 \), using  Proposition~\ref{proposition: noise sensitivity fourier} and Proposition~\ref{proposition: noise stability fourier} we see that degenerate  sequence \( (f_n)_{n\geq 1} \) of Boolean functions with domains \( (S^{(n)})_{n\geq 1} \) will be both noise stable and noise sensitive  with respect to any sequence of reversible and irreducible Markov chains with state spaces \( (S^{(n)})_{n\geq 1} \).
\end{remark}

In the remainder of these notes, we will often be concerned with sequences of Markov chains \( (X^{(n)})_{n \geq 1} \) corresponding to random walks on sequences of connected graphs \( (G_n)_{n \geq 1} \). By a \emph{random walk} on a graph \( G_n \) we mean a continuous time Markov chain with state space \( V(G_n) \) and generator \( Q_n = (q_{vw}^{(n)})_{v,w \in V(G_n)} \) with
\[ 
q^{(n)}_{vw} = 
  \begin{cases} 
    1/\deg(v) &\textnormal{when \( v \) and \( w \) are neighbours} 
    \cr -1 &\textnormal{if \( v=w \)} \cr 
    0 &\textnormal{else}
  \end{cases}
\]
where \( \deg(v) \) is the degree of the vertex \( v \).

Whenever we talk about a sequence of graphs \( (G_n)_{n \geq 1 }\), we will assume that  \( |V(G_n)| \to \infty \) as \( n \to \infty \) and that \( G_n \) is connected for every \( n \). For a graph \( G \), we will use \( \vol(G) \) to denote twice the number of (undirected) edges in the graph. For example, for the complete graph on \( n \) vertices, \( K_n \), we have that \( \vol(K_n) = n(n-1) \).

\section{The existence of noise sensitive functions}

As our definitions of noise sensitivity and noise stability coincide with the definitions given in~\cite{schramm2000}, we already know that both noise stable and noise sensitive nondegenerate sequences of Boolean functions exist  with respect to \( (X^{(n)})_{n\geq 1} \) when \( X^{(n)} \) is a random walk on a {\(n\)-dimensional} Hamming cube. A natural question is if such sequences will exist in general, or if the Hamming cube is a very special case. Recall that by Remark~\ref{remark: expander graphs}, there are sequences of Markov chains for which no such sequences exists.  In this section we will focus on the question about the existence of sequences of noise sensitive functions, and try to give criteria on the Markov chain to guarentee just that.

We begin this section by considering an example of a family of expander graphs, and give a more concrete explanation of why no noise sensitive sequences of functions can exist on these graphs.

\begin{example}
Fix \( k \geq 1 \) and for each \( n \geq k \), let \( X^{(n)} \) be the continuous time Markov chain with state space \( \smash{\binom{[n]}{k}} \coloneqq \{ (w_1, \ldots, w_n) \in \{0,1\}^n \colon w_1 + \cdots + w_n = k \}\)  which evolves as follows. 

At times which are exponentially distributed with parameter 1, two indices \( i,j \in [n] \) are chosen independently at random, and the digits at these positions in the current string are switched. Equivalently,  \( X^{(n)} \) is an exclusion process on the binary strings of length \( n \) and Hamming weight \( k \). For any fixed \(k \), as \( n \to \infty \) this Markov chain has relaxation time of order \(1\) (see e.g.~\cite{ds1987}). As a consequence of this, as \( \alpha \to 0 \), at time \( \alpha \cdot 1 \) the probability is very high that none of the balls have moved at all, and thus all sequences of functions are noise stable with respect to this sequence of Markov chains.
\end{example}

\begin{remark}
The simplest of the models in the previous example is given by choosing \( k = 1 \), in which case we obtain a random walk on the complete graph on \( n \) vertices.
\end{remark}

The moral of the previous example is that if for some sequence of random walks on a sequence of graphs, the relaxation time 
is such that \(t_{rel}^{(n)} \) is bounded from above when \( n \) tends to infinity, then all functions will be noise stable, the reason being that with high probability, the random walker on the \(n\)th graph never moves at all between time 0 and time \( \alpha t_{rel}^{(n)} \). 
Proposition~\ref{proposition: noise stability fourier} shows that another condition which guarantees that all functions are noise stable is that \( \smash{\lambda_{|S^{(n)}|-1}^{(n)} }/ \lambda_1^{(n)} = \mathcal{O}(1) \). However, as for random walks on graphs, \( \lambda_{|S^{n-1}|} \leq 2 \), these conditions are equivalent.

In the rest this section, our goal will be to prove the following result, which shows that when with high  probability,  the position of the random walker at the relaxation time is  not the same as the position of the random walker at time zero, there will be at least one nondegenerate sequence of  functions which is noise sensitive.

\begin{proposition}
Let \( ( X^{(n)} )_{n \geq 1} \) be a sequence of reversible and irreducible continuous time Markov chains. Then  if 
\begin{equation}
\lim_{n \to \infty} \left\{ P( X^{(n)}_0 = X^{(n)}_{ t_{rel}^{(n)}}) -  \sum_{i } \pi_n(i)^2  \right\} = 0
\label{equation: condition for existence of sensitiive function}
\end{equation}
there is at least one nondegenerate sequence of functions \( ( f_n )_{n \geq 1}  \) which is noise sensitive with respect to \( (X^{(n)})_{n\geq 1 } \).

\label{proposition: noise sensitive functions}
\end{proposition}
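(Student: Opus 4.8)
The plan is to recast both the hypothesis and the conclusion in spectral terms, and then to build the required sequence by a moment argument over a random subset of the state space.

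\emph{Spectral reformulation.} Since $\{\psi_k^{(n)}\}_k$ is orthonormal in $L^2(\pi_n)$ one has $\sum_k\psi_k^{(n)}(i)\psi_k^{(n)}(j) = \pi_n(i)^{-1}\mathbf 1\{i=j\}$, hence $H_t^{(n)}(i,i) = \pi_n(i)\sum_k e^{-\lambda_k^{(n)}t}\psi_k^{(n)}(i)^2$, and so, writing $a_k^{(n)} := \sum_i\pi_n(i)^2\psi_k^{(n)}(i)^2$,
\[
P\bigl(X_0^{(n)} = X_t^{(n)}\bigr) \;=\; \sum_i\pi_n(i)\,H_t^{(n)}(i,i) \;=\; \sum_k e^{-\lambda_k^{(n)}t}\,a_k^{(n)} .
\]
Because $\psi_0^{(n)}\equiv 1$ and $\sum_k\psi_k^{(n)}(i)^2 = \pi_n(i)^{-1}$, we have $a_0^{(n)} = \sum_i\pi_n(i)^2$ and $\sum_k a_k^{(n)} = 1$; thus, evaluated at $t = t_{rel}^{(n)} = 1/\lambda_1^{(n)}$, condition~\eqref{equation: condition for existence of sensitiive function} says precisely that $\sum_{k\ge 1}e^{-\lambda_k^{(n)}/\lambda_1^{(n)}}a_k^{(n)}\to 0$. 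Since $e^{-\lambda_k^{(n)}/\lambda_1^{(n)}}\ge e^{-K}$ whenever $\lambda_1^{(n)}\le\lambda_k^{(n)}<K\lambda_1^{(n)}$, this forces
\[
b_K^{(n)} \;:=\; \sum_{k\,:\,\lambda_1^{(n)}\le\lambda_k^{(n)}<K\lambda_1^{(n)}} a_k^{(n)} \;\longrightarrow\; 0 \qquad\text{for every fixed }K .
\]
Writing $L_K^{(n)}(f) := \sum_{k\,:\,\lambda_1^{(n)}\le\lambda_k^{(n)}<K\lambda_1^{(n)}}\hat f(k)^2$ for the ``low-frequency weight of $f$ below $K\lambda_1^{(n)}$'', Proposition~\ref{proposition: noise sensitivity fourier} reduces the task to producing Boolean $f_n$ with $\liminf_n\Var(f_n) > 0$ and $L_K^{(n)}(f_n)\to 0$ for every fixed $K$.

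\emph{A random subset.} Let $A_n\subseteq S^{(n)}$ be random, containing each state independently with probability $\tfrac12$, and set $f_n := \mathbf 1_{A_n}$. For $k\ge 1$ we have $\hat f_n(k) = \sum_i\varepsilon_i\pi_n(i)\psi_k^{(n)}(i)$ with $\varepsilon_i$ i.i.d.\ $\mathrm{Bernoulli}(\tfrac12)$, and since $\sum_i\pi_n(i)\psi_k^{(n)}(i) = \langle 1,\psi_k^{(n)}\rangle = 0$, a one-line variance computation gives $\E\bigl[\hat f_n(k)^2\bigr] = \tfrac14 a_k^{(n)}$. Summing (and using $\Var(f_n) = \sum_{k\ge 1}\hat f_n(k)^2$),
\[
\E\bigl[L_K^{(n)}(f_n)\bigr] = \tfrac14\,b_K^{(n)}, \qquad \E\bigl[\Var(f_n)\bigr] = \tfrac14\bigl(1-a_0^{(n)}\bigr).
\]
Choose $K_n\to\infty$ slowly enough that $b_{K_n}^{(n)}\to 0$ (possible since $b_K^{(n)}\to 0$ for each fixed $K$ and $b_K^{(n)}$ is nondecreasing in $K$); Markov's inequality then gives $\pr\bigl(L_{K_n}^{(n)}(f_n) > \sqrt{b_{K_n}^{(n)}}\bigr) \le \tfrac14\sqrt{b_{K_n}^{(n)}}\to 0$. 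On the other hand $\Var(f_n) = \pi_n(A_n)\bigl(1-\pi_n(A_n)\bigr)\le\tfrac14$ deterministically, so the Paley--Zygmund inequality gives $\pr\bigl(\Var(f_n) > \tfrac12\E[\Var(f_n)]\bigr) \ge \E[\Var(f_n)] = \tfrac14(1-a_0^{(n)})$. Assume $\limsup_n a_0^{(n)} < 1$; this is exactly the condition under which nondegenerate Boolean sequences exist at all, since $(\max_i\pi_n(i))^2 \le a_0^{(n)}\le\max_i\pi_n(i)$, and then the last displayed probability is bounded below by a positive constant uniformly in $n$. A union bound now shows that for all large $n$ there is a realization $A_n$ with $\Var(\mathbf 1_{A_n}) > \tfrac18(1-a_0^{(n)})$ and $L_{K_n}^{(n)}(\mathbf 1_{A_n}) \le \sqrt{b_{K_n}^{(n)}}$ simultaneously; fix such $A_n$ (and any $A_n$ for the finitely many remaining $n$). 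Since $K_n\to\infty$, for each fixed $K$ we have $L_K^{(n)}(\mathbf 1_{A_n}) \le L_{K_n}^{(n)}(\mathbf 1_{A_n}) \le \sqrt{b_{K_n}^{(n)}}\to 0$ eventually, while $\liminf_n\Var(\mathbf 1_{A_n}) \ge \tfrac18(1-\limsup_n a_0^{(n)}) > 0$; so $(\mathbf 1_{A_n})_{n\ge 1}$ is noise sensitive and nondegenerate.

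\emph{Where the difficulty lies.} Once the first step's identity is in place the Fourier estimates are routine; the real issue is nondegeneracy. In contrast to the Hamming-cube setting, $\Var(f_n)$ for the random $f_n$ need not concentrate (it does so only when $\sum_i\pi_n(i)^2\to 0$), so one cannot argue pointwise that $\Var(f_n)\to\tfrac14$. The device is to show that with uniformly positive probability the random function is already nondegenerate (Paley--Zygmund together with $\limsup_n\sum_i\pi_n(i)^2 < 1$), and to marry this to the high-probability smallness of $L_{K_n}^{(n)}$ by a single union bound. The assumption $\limsup_n\sum_i\pi_n(i)^2 < 1$ can be removed by randomizing only over the light states $\{i:\pi_n(i)<\delta\}$ and fixing the at most $\delta^{-1}$ heavy states by hand — a heavy state $i$ contributes at most $\delta^{-2}b_K^{(n)}\to 0$ to the low-frequency weight — but in the complementary case $\sum_i\pi_n(i)^2\to 1$ no nondegenerate Boolean sequence exists, so the statement is to be understood with that degeneracy excluded; it holds automatically, e.g., for random walks on graphs with $\sum_v\deg(v)^2 = o(\vol(G_n)^2)$.
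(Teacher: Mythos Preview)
Your proof is correct and follows a genuinely different route from the paper's. Both arguments are probabilistic---they draw a random subset of $S^{(n)}$ and use a first-moment bound---but the execution differs substantially.

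The paper samples a subset of \emph{fixed} size $m_n$ (without replacement), then computes the expectation of the covariance $Y_n=\Cov(f_n(X_0^{(n)}),f_n(X_{t_{rel}^{(n)}}^{(n)}))$ directly, by conditioning on the event $\{X_0^{(n)}=X_{t_{rel}^{(n)}}^{(n)}\}$; the calculation collapses to
\[
\E[Y_n]=\frac{|S^{(n)}|}{|S^{(n)}|-1}\cdot\frac{m_n(|S^{(n)}|-m_n)}{|S^{(n)}|^2}\cdot\Bigl\{P(X_0^{(n)}=X_{t_{rel}^{(n)}}^{(n)})-\sum_i\pi_n(i)^2\Bigr\},
\]
and one simply picks any realization with $Y_n\le\E[Y_n]$. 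Your approach instead samples each state independently with probability $\tfrac12$, first rewrites the hypothesis spectrally as $\sum_{k\ge 1}e^{-\lambda_k^{(n)}/\lambda_1^{(n)}}a_k^{(n)}\to 0$ with $a_k^{(n)}=\sum_i\pi_n(i)^2\psi_k^{(n)}(i)^2$, and then controls the expected low-frequency Fourier weight $\E[L_K^{(n)}(f_n)]=\tfrac14 b_K^{(n)}$ level by level, finishing with a Markov/Paley--Zygmund combination to extract a realization that is simultaneously noise sensitive and nondegenerate.

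What each buys: the paper's argument is shorter and avoids the spectral decomposition of $H_t^{(n)}$ altogether, working entirely at the level of the covariance. Your argument is more transparent about \emph{why} the hypothesis suffices---it makes explicit that \eqref{equation: condition for existence of sensitiive function} forces the low-eigenvalue ``mass profile'' $b_K^{(n)}$ to vanish, which is exactly what Proposition~\ref{proposition: noise sensitivity fourier} needs. More importantly, you handle nondegeneracy with care: the paper asserts that its random $f_n$ has $\Var(f_n)=m_n(|S^{(n)}|-m_n)/|S^{(n)}|^2$, which is only the variance under the uniform measure, not under $\pi_n$, and it never checks that the particular realization achieving $Y_n\le\E[Y_n]$ has $\pi_n$-variance bounded away from zero. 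Your Paley--Zygmund step (using $\Var(f_n)\le\tfrac14$ deterministically) closes this gap cleanly, and your discussion of the boundary case $\limsup_n\sum_i\pi_n(i)^2=1$---where no nondegenerate Boolean sequence can exist---is a point the paper leaves implicit.
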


\begin{remark}
Note that~\eqref{equation: condition for existence of sensitiive function} is equivalent to that
\begin{equation}
\lim_{n \to \infty} \left\{ \sum_{w \in S^{(n)}  } \pi_n(w) \cdot \left( P(  X^{(n)}_{ t_{rel}^{(n)}} = w \mid X^{(n)}_0 = w) -  P( X^{(n)}_{ t_{rel}^{(n)}} = w) \right)   \right\} =0
\end{equation}
i.e.~\eqref{equation: condition for existence of sensitiive function} is a measure of how different the probability of being at \( X_0^{(n)} \) at time \( t_{rel}^{(n)} \) is from being in \( X_0^{(n)}  \) when in the stationary distribution. Also,~\eqref{equation: condition for existence of sensitiive function} can be rewritten as
\begin{equation}
\lim_{n \to \infty} \left\{ \sum_{w \in S^{(n)} } \Cov \Bigl( \mathbf{1}_w(X_0^{(n)}), \mathbf{1}_w(X^{(n)}_{t_{rel}^{(n)}}) \Bigl) \right\} = 0
\end{equation}
i.e. as the sum of the noise sensitivity of the indicator functions of all \( w \in S^{(n)} \).
\end{remark}

Even though in general, it might be hard to check whether~\eqref{equation: condition for existence of sensitiive function} holds, Proposition~\ref{proposition: noise sensitive functions} might be useful in some special cases. It is e.g. relatively simple to show that~\eqref{equation: condition for existence of sensitiive function} holds for graphs whose minimum degree tends to infinity, such as sequences of hypercubes. 
On the other hand, the following example, due to Johan Jonasson, shows that there are sequences of graphs on which~\eqref{equation: condition for existence of sensitiive function} do not hold, even though the corresponding sequence of relaxation times is unbounded.

\begin{example}
Let \( G_n\) be the graph obtained by joining \( 2n \) stars with \( n^2 \) outer vertices byadding all possible edges between their centers (see figure~\ref{figure: Johans example}). This relaxation time of the random walk on this graph can be at most of the same order as the expected time until one of the edges between the centers is used, which is of order \(n\). As the bottleneck ratio can be at most \( 1/n \), it follows that the relaxation time \( t_{rel}^{(n)} \) in fact equals \( n \).

Now pick \( \varepsilon>0 \) to be very small. Then with probability close to 0.5, given that the random walk started in one of the vertices of the inner complete graph, \( x \), at time \( \varepsilon t_{rel}^{(n)} \), the random walk is at the same position as where it started. By a similar argument, it follows that given that the random walk started in one of the outer vertices, \( y \), at time \( \varepsilon t_{rel}^{(n)} \) the probability is close to \( 1/2n^2 \) that the random walk is in \( y \). Using this, as well as standard results for random walk on discrete graphs, it follows that~\eqref{equation: condition for existence of sensitiive function} do not hold. However, as the mixing time of each individual star is of order one, it is relatiively easy to see that any sequence of functions with \( \lim_{n \to \infty} \Var_i(\E[f_n(X_0^{(n)} \mid X_0^{(n)} \textnormal{ in star number } i]) = 0 \) will be noise sensitive.

\begin{figure}[H]
\begin{tikzpicture}[scale = 1.5]
  \def\N{5}
  \def\a{6}

  \foreach \i in {0,...,\N}{
    \foreach \j in {\i,...,\N}{
    \draw ({cos((\i+0.5)*360/(\N+1))},{sin((\i+0.5)*360/(\N+1))}) -- ({cos((\j+0.5)*360/(\N+1))},{sin((\j+0.5)*360/(\N+1))});
    };
  };

  \foreach \i in {0,...,\N}{
    \foreach \j in {1,...,\a}{
      \draw ({cos((\i+0.5)*360/(\N+1))},{sin((\i+0.5)*360/(\N+1))})  --  
               ({1.7*cos((\a*(\i+0.5)+0.5*(\j-0.5*\a-0.5))*360/(\a*(\N+1)))},{1.7*sin((\a*(\i+0.5)+0.5*(\j-0.5*\a-0.5))*360/(\a*(\N+1)))});
      \draw[fill = black] ({1.7*cos((\a*(\i+0.5)+0.5*(\j-0.5*\a-0.5))*360/(\a*(\N+1)))},{1.7*sin((\a*(\i+0.5)+0.5*(\j-0.5*\a-0.5))*360/(\a*(\N+1)))}) circle (1pt);
    };
  };

  \foreach \i in {0,...,\N}{
    \draw[fill = white] ({cos((\i+0.5)*360/(\N+1))},{sin((\i+0.5)*360/(\N+1))})  circle (2pt);
  };

 \end{tikzpicture}

\caption{The figure above shows the graph \( G_n\), which is obtained by joining \( n \) stars with \( n \) leaves by adding all possible edges between their centers.}
\label{figure: Johans example}
\end{figure}
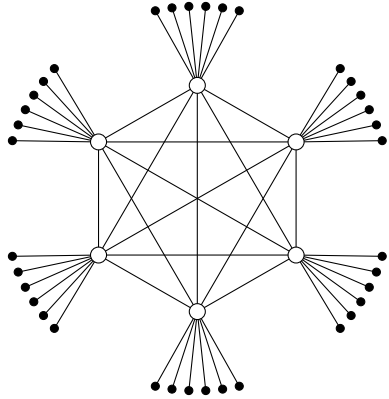

\end{example}

We now give a proof of Proposition~\ref{proposition: noise sensitive functions}.
\begin{proof}[Proof of Proposition~\ref{proposition: noise sensitive functions}]

Fix some \(n \geq 1 \). Let \( (m_n)_{n\geq 1} \) be a sequence of  integers s.t. \( \lim_{n \to \infty } m_n/|S^{(n)}| \in (0,1) \). Define \( f_n \) by choosing exactly \( m_n  \) of the states in \( S^{(n)} \) independently at random, and set \( f_n \) to be one on those vertices and \( 0 \) everywhere else. Then 
\[ 
\lim_{n \to \infty} \Var [f_n(w)] =  \lim_{n \to \infty} m_n(|S^{(n)}|-m_n)/|S^{(n)}|^2 \in (0,1) 
\]
so that any sequence \( ( f_n)_{n \geq 1 } \) made by picking a sequence of functions in this way is nondegenerate.

Define the random vaiable \( Y_m\) in terms of \( f_n\) be 
\[
Y_n = \Cov \left( f_n(X^{(n)}_0), f_n(X^{(n)}_{ t_{rel}^{(n)}}) \right) = \E \left[ f_n(X^{(n)}_0) f_n(X^{(n)}_{ t_{rel}^{(n)}})\right] - \E \left[ f_n(X^{(n)}_0) \right]^2 .
\]
and let \( v^{(n)}, w^{(n)} \in S^{(n)} \), \( v^{(n)} \not = w^{(n)} \) be fixed.
Then
\begin{equation}
\begin{split}
\E[Y_n] &=  \E \left[ \E \Bigl[ f_n(X^{(n)}_0) f_n(X^{(n)}_{ t_{rel}^{(n)}}) \mid f_n\Bigr] - \E \Bigl[ f_n(X^{(n)}_0) \mid f_n\Bigr]^2 \right] 
\\&=
E_{f_n} \left[ \E \Bigl[ f_n(X^{(n)}_0) f_n(X^{(n)}_{ t_{rel}^{(n)}}) \mid f_n \Bigr] \right] - \E \left[  \E \Bigl[ f_n(X^{(n)}) \mid f_n \Bigr]^2 \right] .
\end{split}
\label{equation: E of Xn}
\end{equation}
Rewriting the first of these two terms, we obtain
\begin{equation*}
\begin{split}
\E \left[ \E \left[ f_n(X^{(n)}_0) f_n(X^{(n)}_{ t_{rel}^{(n)}}) \mid f_n \right]  \right]
=
&\E \left[ \E \Bigl[  f_n(X^{(n)}_0) f_n(X^{(n)}_{ t_{rel}^{(n)}})\mid  X_0^{(n)},\, X_{t_{rel}^{(n)}}^{(n)}\Bigr]  \right]
\\=
&\E \left[ \E_{f_n} \Bigl[ f_n(X^{(n)}_0) f_n(X^{(n)}_{ t_{rel}^{(n)}}) \mid  X_0^{(n)},\, X_{t_{rel}^{(n)}}^{(n)} \Bigr] \mid X^{(n)}_0 = X^{(n)}_{ t_{rel}^{(n)}}\right]   \cdot P( X^{(n)}_0 = X^{(n)}_{ t_{rel}^{(n)}})
\\
+ &\E \left[ \E_{f_n} \Bigl[ f_n(X^{(n)}_0) f_n(X^{(n)}_{ t_{rel}^{(n)}}) \mid  X_0^{(n)},\, X_{t_{rel}^{(n)}}^{(n)} \Bigr] \mid X^{(n)}_0 \not = X^{(n)}_{ t_{rel}^{(n)}} \right] \cdot P( X^{(n)}_0 \not = X^{(n)}_{ t_{rel}^{(n)}}).
\end{split}
\end{equation*}
Here
\begin{equation*}
\begin{split}
\E \left[ \E \Bigl[ f_n(X^{(n)}_0) f_n(X^{(n)}_{ t_{rel}^{(n)}}) \mid  X_0^{(n)},\, X_{t_{rel}^{(n)}}^{(n)} \Bigr] \mid X^{(n)}_0 = X^{(n)}_{ t_{rel}^{(n)}}\right]  
&=
\E \left[ \E \Bigl[ f_n(X^{(n)}_0)^2 \mid  X_0^{(n)}  \Bigr] \mid X^{(n)}_0 = X^{(n)}_{ t_{rel}^{(n)}}\right]  
\\&=
\E \left[ \E \Bigl[ f_n(X^{(n)}_0) \mid  X_0^{(n)}  \Bigr] \mid X^{(n)}_0 = X^{(n)}_{ t_{rel}^{(n)}}\right]  
\\&=
\E \left[ \E \Bigl[ f_n(X^{(n)}_0) \mid  X_0^{(n)}  \Bigr] \right]  
\\&= {m_n}/{|S^{(n)}|}
\end{split}
\end{equation*}
and similarly
\begin{equation*}
\E \left[ \E \Bigl[ f_n(X^{(n)}_0) f_n(X^{(n)}_{ t_{rel}^{(n)}}) \mid  X_0^{(n)},\, X_{t_{rel}^{(n)}}^{(n)}  \Bigr] \mid X^{(n)}_0 \not = X^{(n)}_{ t_{rel}^{(n)}} \right]
=
 \left.  {\binom{m_n}{2}} \right/ {\binom{|S^{(n)}|}{2}} 
=
 \frac{m_n(m_n-1)}{|S^{(n)}|(|S^{(n)}|-1)}.
\end{equation*}
For the last term in~\eqref{equation: E of Xn}
\begin{equation*}
\begin{split}
\E  [ \E [f_n(X^{(n)}_0) \mid f_n]^2] 
&= \E  \left[ P(f_n(X_0^{(n)}) = 1 \mid f_n)^2\right]
\\&=
\binom{|S^{(n)}|}{m_n}^{-1} \cdot \left( \binom{|S^{(n)}|-1}{m_n-1} \sum_{i \in S^{(n)}}  \!\! \pi_n(i)^2 +  \binom{|S^{(n)}|-2}{m_n-2} \sum_{i,j \in S^{(n)}\colon  i\not=j} \!\!\!\! \pi_n(i)\pi_n(j) \right)
\\&=
\frac{m_n}{|S^{(n)}|} \sum_{i \in S^{(n)}} \pi_n(i)^2 +  \frac{m_n(m_n-1)}{|S^{(n)}|(|S^{(n)}|-1)} \sum_{i,j \in S^{(n)} \colon  i\not=j} \pi_n(i)\pi_n(j) 
\\&=
\left( \frac{m_n}{|S^{(n)}|} -  \frac{m_n(m_n-1)}{|S^{(n)}|(|S^{(n)}|-1)}  \right) \sum_{i } \pi_n(i)^2 +  \frac{m_n(m_n-1)}{|S^{(n)}|(|S^{(n)}|-1)} \sum_{ i,j} \pi_n(i)\pi_n(j) 
\\&=
 \frac{m_n(|S^{(n)}|-m_n)}{|S^{(n)}|(|S^{(n)}|-1)} \sum_{i } \pi_n(i)^2 +  \frac{m_n(m_n-1)}{|S^{(n)}|(|S^{(n)}|-1)} \left( \sum_{ i} \pi_n(i) \right)^2
\\&=
 \frac{m_n(|S^{(n)}|-m_n)}{|S^{(n)}|(|S^{(n)}|-1)} \sum_{i } \pi_n(i)^2 +  \frac{m_n(m_n-1)}{|S^{(n)}|(|S^{(n)}|-1)}.
\end{split}
\end{equation*}
Summing up, we obtain
\begin{align*}
\E [Y_n] 
=&
\frac{m_n}{|S^{(n)}|} \cdot P( X^{(n)}_0 = X^{(n)}_{ t_{rel}^{(n)}}) 
+
 \frac{m_n(m_n-1)}{|S^{(n)}|(|S^{(n)}|-1)} \cdot P( X^{(n)}_0 \not = X^{(n)}_{ t_{rel}^{(n)}})
\\&-
\left(  \frac{m_n(|S^{(n)}|-m_n)}{|S^{(n)}|(|S^{(n)}|-1)} \sum_{i } \pi(i)^2 +  \frac{m_n(m_n-1)}{|S^{(n)}|(|S^{(n)}|-1)} \right)
\\=&
\frac{m_n(|S^{(n)}|-m_n)}{|S^{(n)}|(|S^{(n)}|-1)} \cdot \left( P( X^{(n)}_0 = X^{(n)}_{ t_{rel}^{(n)}}) -  \sum_{i } \pi(i)^2  \right)
\\ = &
\frac{|S^{(n)}|}{|S^{(n)}|-1} \cdot \Var(f_n) \cdot \left\{ P( X^{(n)}_0 = X^{(n)}_{ t_{rel}^{(n)}}) -  \sum_{i } \pi(i)^2 \right\}.
\end{align*}

Now for each \( n \), fix a function \( f_n^* \) defined as above for which 
\[
\Cov \left( f_n^*(X^{(n)}_0), f_n^*(X^{(n)}_{ t_{rel}^{(n)}}) \right) \leq \E [Y_n].
\]
Then by assumption, \( (f_n^* )_{n \geq 1} \) is noise sensitive with respect to \( (X^{(n)})_{n \geq 1} \).

\end{proof}

\section{The existence of noise stable functions}
In analogue with Proposition~\ref{proposition: noise sensitive functions}, which gives a criteria for when there exist nondegenerate sequences of Boolean  functions  which are  \emph{noise sensitive} with respect to some Markov chain \( (X^{(n)})_{n\geq 1} \), the goal of this section is to obtain criteria for when nondegenerate \emph{noise stable} sequences of Boolean functions exist. In Proposition~\ref{proposition: noise stability existence equivalence}, we  show that the existence of noise stable functions is tightly connected with the so called \emph{delocalization} of eigenvectors which has recently been studied for random graphs and random matrices in eg.~\cite{ab2014}, ~\cite{dll2011},~\cite{dp2012} and~\cite{esy2009}. Proposition~\ref{proposition: vertex transitive functions} then provides a condition which guarantees the existence of nondegenerate noise stable sequence of Boolean functions whenever \( (X^{(n)})_{n \geq 1} \) is a sequence of transitive,  reversible and irreducible   Markov chains. In particular, combining the two propositions, we obtain that eigenvectors of transition  matrices of transitive Markov chains do not localize.

We now state our  main results of this section.

\begin{proposition}\label{proposition: noise stability existence equivalence}
Let \( (X^{(n)})_{n \geq 1} \)  be a sequence of reversible and irreducible  continuous time Markov chains. Then the following two conditions are equivalent.
\begin{enumerate}[label=(\Alph*)]
\item there exists a nondegenerate sequence of Boolean functions which is  noise stable with respect to \( (X^{(n)})_{n \geq 1} \)
%
%%if and only if
%
\item there is \( \varepsilon > 0 \), \( k> 0 \) and a sequence \( (\psi^{(n)} )_{ n \geq 1 } \), where \(\psi^{(n)} \in {\Psi_k^{(n)}} \coloneqq \Span( \{ \psi_i^{(n)}\}_{i \colon \lambda_1^{(n)} \leq \lambda_i^{(n)} \leq k \lambda_1^{(n)}}) \) and \( \langle \psi^{(n)}, \psi^{(n)} \rangle = 1 \), such that
\begin{equation}
P_w(\psi^{(n)}(w)^2 < \varepsilon) < 1 - \varepsilon
\label{equation: condition B}
\end{equation}
for all large enough \( n \).
\end{enumerate}
\end{proposition}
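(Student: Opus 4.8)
The plan is to prove the two implications separately, using the spectral characterization of noise stability from Proposition~\ref{proposition: noise stability fourier} throughout.

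First I would prove that (B) implies (A). Given the sequence $(\psi^{(n)})_{n \geq 1}$ with $\psi^{(n)} \in \Psi_k^{(n)}$, $\langle \psi^{(n)}, \psi^{(n)} \rangle = 1$, and $P_w(\psi^{(n)}(w)^2 < \varepsilon) < 1-\varepsilon$, the idea is to build a Boolean function from a thresholded version of $\psi^{(n)}$. Concretely, set $f_n = \mathbf{1}[\psi^{(n)} \geq c]$ for a suitable constant $c$ (or $\mathbf{1}[(\psi^{(n)})^2 \geq \varepsilon]$, choosing the sign of the level set so the mass is balanced). Condition~\eqref{equation: condition B} forces a positive fraction of the $\pi_n$-mass to land where $(\psi^{(n)})^2 \geq \varepsilon$, so $\Var(f_n)$ is bounded away from $0$, giving nondegeneracy. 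For noise stability one needs that $f_n$ is well-approximated by functions supported on the low eigenspaces: since $\psi^{(n)} \in \Psi_k^{(n)}$ is a $\pm$-bounded (in $L^2$) function whose high-frequency Fourier mass is literally zero, a thresholded version of it has most of its Fourier mass at eigenvalues $\le k\lambda_1^{(n)}$ — this is where one invokes that thresholding a low-frequency function cannot spread too much mass to high frequencies. The cleanest route is probably to note $f_n$ lies within $L^2$-distance $o(1)$ (uniformly in $n$) of a function in $\Psi_k^{(n)}$ is too strong; instead one should directly estimate $\sum_{i : \lambda_i^{(n)} \ge K\lambda_1^{(n)}} \hat f_n(i)^2$ for large $K$ using that $f_n$ is determined by $\psi^{(n)}$ pointwise. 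Actually the robust argument is: $f_n$ can be taken to equal a fixed Lipschitz function $g$ applied to $\psi^{(n)}$, and then use that applying a Lipschitz function contracts the Dirichlet form, so $\langle -Q_n f_n, f_n\rangle \le C\langle -Q_n \psi^{(n)}, \psi^{(n)}\rangle \le C k \lambda_1^{(n)}$, which by the spectral decomposition caps the high-frequency mass and yields~\eqref{equation: noise stability fourier}. The subtlety that $g$ of a Boolean-valued threshold is not Lipschitz is handled by smoothing the threshold over a window of width $\sqrt\varepsilon$; I expect this Dirichlet-form comparison to be the main technical obstacle, since one must simultaneously control variance (needs a sharp threshold) and the Dirichlet form (needs a gentle threshold).

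Next I would prove that (A) implies (B). Suppose $(f_n)_{n\geq 1}$ is nondegenerate and noise stable. By Proposition~\ref{proposition: noise stability fourier}, there is $k$ with $\sup_n \sum_{i : \lambda_i^{(n)} \geq k\lambda_1^{(n)}} \hat f_n(i)^2 < \tfrac14 \liminf_n \Var(f_n)$, so the projection $g_n$ of $f_n$ onto $\Psi_k^{(n)} \oplus \mathbb{R}\psi_0^{(n)}$ (equivalently, truncating $f_n$ to eigenvalues in $\{0\}\cup[\lambda_1^{(n)}, k\lambda_1^{(n)}]$) satisfies $\|f_n - g_n\|_2^2 \le \tfrac14\Var(f_n)$ for large $n$; subtracting the mean, $\psi^{(n)} := (g_n - \hat g_n(0))/\|g_n - \hat g_n(0)\|_2 \in \Psi_k^{(n)}$ is a unit vector that is $L^2$-close to the centered $f_n/\|f_n - \hat f_n(0)\|_2$. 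The point is that $\psi^{(n)}$ inherits spread-out-ness from $f_n$: since $f_n$ is Boolean and nondegenerate, $\E[f_n]$ and $\E[1-f_n]$ are both bounded away from $0$ only if $(f_n)$ is genuinely nondegenerate — one may need to pass to a subsequence or handle the case $\E[f_n]\to 0$ separately, but in any nondegenerate case a positive $\pi_n$-fraction of states have $f_n = 1$ and $(f_n - \E[f_n])^2 \ge c$ there. Transferring this through the $L^2$-approximation via Markov's inequality ($\pi_n(\{w : |\psi^{(n)}(w) - \text{centered }f_n(w)| > t\}) \le \|f_n-g_n\|_2^2 / (t^2 \|f_n - \hat f_n(0)\|_2^2)$) shows that on a set of $\pi_n$-measure $\ge \varepsilon$ we have $(\psi^{(n)}(w))^2 \ge \varepsilon$ for a suitable small $\varepsilon$, which is exactly~\eqref{equation: condition B}.

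The main obstacle, as flagged, is the (B) $\Rightarrow$ (A) direction: turning a low-frequency real eigenfunction combination with a non-negligible "bulk" into a genuinely Boolean function while keeping the high-frequency Fourier mass small. The Lipschitz/Dirichlet-form contraction estimate is the heart of it, and one must choose the smoothing window carefully so that the resulting (rounded) Boolean function is both nondegenerate and still low-frequency; I would expect the write-up to spend most of its effort there, whereas the (A) $\Rightarrow$ (B) direction is essentially a soft $L^2$-projection-plus-Markov-inequality argument.
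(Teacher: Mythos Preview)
Your (A)$\Rightarrow$(B) direction is correct and is essentially the paper's own argument (Lemma~\ref{lemma: third lemma}): project the noise-stable Boolean $f_n$ onto the low eigenspace, normalize, and use that this projection is $L^2$-close to the centered $f_n$, which by nondegeneracy is pointwise bounded away from zero; a Markov/Chebyshev step then gives~\eqref{equation: condition B}.

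For (B)$\Rightarrow$(A) you have the right template (threshold some $\psi^{(n)}\in\Psi_k^{(n)}$) and you correctly flag the obstacle, but the Lipschitz/Dirichlet-form plan does not close it. Smoothing over a window of width $\sqrt\varepsilon$ yields a $[0,1]$-valued function with Dirichlet form $\lesssim k\lambda_1^{(n)}/\varepsilon$, which is fine, but that function is not Boolean, and rounding it back to $\mathbf 1_{\psi^{(n)}\ge c}$ costs an $L^2$-error of order $P\bigl(\psi^{(n)}\in[c-\sqrt\varepsilon,\,c+\sqrt\varepsilon]\bigr)$. Condition~(B) gives no control on this quantity: it only says $P(|\psi^{(n)}|<\sqrt\varepsilon)<1-\varepsilon$, so even the window at $c=0$ may carry mass $1-\varepsilon$. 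Hence the high-frequency Fourier mass of the rounded Boolean is uncontrolled, for \emph{any} fixed choice of $c$, and your plan stalls exactly at the tension you identified.

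The paper resolves this by running the implication contrapositively over \emph{all} $\psi\in\Psi_k^{(n)}$ and \emph{all} thresholds at once. First (Lemma~\ref{lemma: technical part of first lemma}) it shows, by a direct Chebyshev bound on $|\psi^{(n)}(X_0)-\psi^{(n)}(X_{\alpha t_{rel}^{(n)}})|$, that $\mathbf 1_{\psi^{(n)}\ge c_n}$ is noise stable whenever the window mass $P\bigl(\psi^{(n)}\in[c_n-g(\alpha),\,c_n+g(\alpha)]\bigr)\to 0$ uniformly in $n$ as $\alpha\to 0$, with $\alpha/g(\alpha)^2\to 0$. Then (Lemma~\ref{lemma: first lemma}) it argues: if \emph{no} such level exists for \emph{any} unit $\psi^{(n)}\in\Psi_k^{(n)}$, then every level $c$ in the nondegenerate range $I_\delta(\psi^{(n)})$ carries a uniformly positive window-mass for arbitrarily small windows; a pigeonhole count then forces $|I_\delta(\psi^{(n)})|\to 0$, and since $\E[\psi^{(n)}]=0$ this interval must sit near the origin, giving $(\neg B)$ for every $\psi^{(n)}\in\Psi_k^{(n)}$. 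The quantifier swap---failure of \emph{every} threshold of \emph{every} low-frequency unit vector forces localization of \emph{every} such vector---is the idea your plan is missing.
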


Although the previous proposition might seem quite technical, we get the following result as a consequence.

\begin{proposition}
For all sequences of transitive,  reversible and irreducible continuous time Markov chains \( (X^{(n)})_{n \geq 1} \) there is  at least one nondegenerate  sequence of Boolean functions which is noise stable with respect to \( (X^{(n)})_{n \geq 1} \).
\label{proposition: vertex transitive functions}
\end{proposition}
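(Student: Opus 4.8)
## Proof proposal

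The plan is to verify condition (B) of Proposition~\ref{proposition: noise stability existence equivalence} for transitive chains, which by that proposition immediately yields a nondegenerate noise stable sequence of Boolean functions. So the whole task reduces to exhibiting, for each $n$, a constant $k$, a constant $\varepsilon>0$, and a unit vector $\psi^{(n)} \in \Psi_k^{(n)}$ whose mass is spread out, in the sense that $P_w(\psi^{(n)}(w)^2 < \varepsilon) < 1-\varepsilon$ for all large $n$.

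The key observation is that transitivity forces the stationary distribution to be uniform, $\pi_n(w) = 1/|S^{(n)}|$, and moreover makes the eigenspaces of $-Q_n$ permutation-invariant: if $\sigma \in \Aut(X^{(n)})$ and $\psi$ is an eigenvector with eigenvalue $\lambda$, then $\psi \circ \sigma$ is again an eigenvector with the same eigenvalue. I would use $k=1$, so $\Psi_1^{(n)}$ is exactly the eigenspace of the spectral gap $\lambda_1^{(n)}$ together with $\psi_0^{(n)}$ — actually the relevant span is of the $\psi_i^{(n)}$ with $\lambda_1^{(n)} \le \lambda_i^{(n)} \le \lambda_1^{(n)}$, i.e. the $\lambda_1$-eigenspace $V_1^{(n)}$. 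Pick any unit vector $\psi \in V_1^{(n)}$. For $\sigma \in \Aut(X^{(n)})$, $\psi \circ \sigma \in V_1^{(n)}$ as well, and $\langle \psi\circ\sigma, \psi\circ\sigma\rangle = 1$ since $\pi_n$ is uniform and hence $\Aut$-invariant. Now average the function $w \mapsto \psi(w)^2$ over the group: by transitivity, $\frac{1}{|\Aut|}\sum_{\sigma} \psi(\sigma w)^2$ is constant in $w$, and its value is $\frac{1}{|\Aut|}\sum_\sigma \langle \psi\circ\sigma,\psi\circ\sigma\rangle \cdot \frac{1}{|S^{(n)}|}\cdot|S^{(n)}| $... more precisely the average over $w$ of $\psi(w)^2$ is $\|\psi\|_2^2 = 1$ (with the uniform measure, $\E_w[\psi(w)^2] = 1$), so the constant equals $1$. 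Hence there exists $\sigma$ with $\E_w[\psi(\sigma w)^2] \ge 1$; better, I would argue directly that some $\psi^{(n)} := \psi \circ \sigma$ in the $\lambda_1$-eigenspace satisfies $P_w(\psi^{(n)}(w)^2 \ge 1/2) \ge$ a universal constant — indeed, since $\E_w[\psi^{(n)}(w)^2] = 1$ and $\psi^{(n)}(w)^2 \le \|\psi^{(n)}\|_\infty^2 \le |S^{(n)}|$, a reverse-Markov (Paley–Zygmund-type) argument gives $P_w(\psi^{(n)}(w)^2 \ge 1/2) \ge 1/(2|S^{(n)}|)$ which is too weak. The correct route: use that the \emph{function} $w\mapsto \E_{\sigma}[\psi(\sigma w)^2]$ is identically $1$, so we cannot conclude a single $\sigma$ works pointwise; instead observe $1 = \E_w[\psi^{(n)}(w)^2]$ and apply this to \emph{any} fixed unit $\psi^{(n)} \in V_1^{(n)}$ — then $P_w(\psi^{(n)}(w)^2 \ge \varepsilon) \ge$ some $\delta>0$ follows from $\E_w[\psi^{(n)}(w)^2]=1$ combined with an $L^4$ or $L^\infty$ bound only if we control the upper tail.

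So the one genuine technical point — and the main obstacle — is ruling out the possibility that the $\lambda_1$-eigenfunction is itself \emph{localized} (concentrated on a vanishing fraction of states with huge values there and nearly zero elsewhere). This is exactly the ``delocalization'' phenomenon the paper advertises. For transitive chains I would resolve it by symmetrization: fix a unit $\psi \in V_1^{(n)}$ and set $\psi^{(n)}$ to be the one among $\{\psi\circ\sigma : \sigma\in\Aut(X^{(n)})\}$ — no; rather, I would bound the \emph{fourth moment}. Because $V_1^{(n)}$ is $\Aut$-invariant, the orthogonal projection $P_1$ onto $V_1^{(n)}$ commutes with the $\Aut$-action, so its kernel (as a function of $(v,w)$) is $\Aut$-invariant, hence $P_1(w,w) = \frac{\dim V_1^{(n)}}{|S^{(n)}|} =: c_n$ is constant on the diagonal. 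For a unit $\psi\in V_1^{(n)}$ we have $\psi(w) = \sum_v P_1(w,v)\psi(v)$, so by Cauchy–Schwarz $\psi(w)^2 \le P_1(w,w)\sum_v|\psi(v)|^2 \cdot |S^{(n)}|^{-1}\cdot|S^{(n)}|$... carefully with the uniform measure: $\psi(w)^2 = (\sum_v P_1(w,v)\psi(v)\pi_n(v)\cdot\pi_n(v)^{-1})^2 \le (\sum_v P_1(w,v)^2 \pi_n(v))(\sum_v \psi(v)^2\pi_n(v)) \cdot \pi_n(v)^{-1}$, i.e. the uniform bound $\psi(w)^2 \le P_1(w,w) = c_n \le 1$, using $\sum_v P_1(w,v)^2\pi_n(v) = P_1(w,w)$ (idempotence and symmetry of the projection kernel). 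Thus $\|\psi^{(n)}\|_\infty^2 \le 1$ for \emph{every} unit vector in the $\lambda_1$-eigenspace, while $\E_w[\psi^{(n)}(w)^2]=1$; the reverse Markov inequality then gives $P_w(\psi^{(n)}(w)^2 \ge 1/2) \ge 1/2$, so condition (B) holds with $\varepsilon = 1/2$ and $k=1$.

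To write this up I would (i) record that transitivity implies $\pi_n$ uniform and each eigenspace $\Aut$-invariant; (ii) deduce that the projection kernel onto the $\lambda_1$-eigenspace is $\Aut$-invariant, hence constant on the diagonal, with diagonal value $c_n = \dim(V_1^{(n)})/|S^{(n)}| \le 1$; (iii) prove the pointwise bound $\psi(w)^2 \le c_n \le 1$ for every unit $\psi \in V_1^{(n)}$ via Cauchy–Schwarz and idempotence of the projection; (iv) combine with $\E_w[\psi(w)^2]=1$ and the reverse Markov inequality to get $P_w(\psi(w)^2 \ge 1/2) \ge 1/2$; (v) invoke Proposition~\ref{proposition: noise stability existence equivalence}(B)$\Rightarrow$(A) with $\varepsilon = 1/2$, $k=1$ to conclude. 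The one place to be careful is step (iii)/(ii): making precise that the projection onto an $\Aut$-invariant subspace has an $\Aut$-invariant kernel and that $\sum_v P_1(w,v)^2\pi_n(v) = P_1(w,w)$ — both are standard but deserve a line. Everything else is routine.
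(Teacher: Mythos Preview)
Your overall strategy --- verify condition (B) of Proposition~\ref{proposition: noise stability existence equivalence} with \(k=1\), using that transitivity forces \(\pi_n\) to be uniform and makes the \(\lambda_1\)-eigenspace \(\Aut\)-invariant --- is exactly the route the paper takes. The observations that the projection kernel onto \(V_1^{(n)}\) is \(\Aut\)-invariant and therefore has constant diagonal are correct and are essentially the content of the paper's Lemma~\ref{lemma: maximal value}.

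The argument breaks, however, at step (iii). You claim \(\psi(w)^2 \le P_1(w,w)\) with \(P_1(w,w)=\dim V_1^{(n)}/|S^{(n)}|\le 1\), but this mixes two normalisations. If \(P_1(w,v)=\sum_i \chi_i(w)\chi_i(v)\) is the kernel with respect to \(\pi_n\) (so that \(P_1f(w)=\sum_v \pi_n(v)P_1(w,v)f(v)\)), then Cauchy--Schwarz and idempotence do give \(\psi(w)^2\le P_1(w,w)\), but in this convention \(P_1(w,w)=\sum_i\chi_i(w)^2\), whose \(\pi_n\)-average is \(\dim V_1^{(n)}\), hence \(P_1(w,w)\equiv\dim V_1^{(n)}\) by transitivity. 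If instead \(P_1\) is the matrix (so \(\psi(w)=\sum_v P_1(w,v)\psi(v)\)), then indeed \(\sum_v P_1(w,v)^2=P_1(w,w)=\dim V_1^{(n)}/|S^{(n)}|\), but now \(\sum_v\psi(v)^2=|S^{(n)}|\), and Cauchy--Schwarz again gives \(\psi(w)^2\le\dim V_1^{(n)}\). Either way the correct bound is \(\|\psi\|_\infty^2\le\dim V_1^{(n)}\), and this is sharp (it is exactly Lemma~\ref{lemma: maximal value}). A quick sanity check: your inequality \(\|\psi\|_\infty^2\le 1\) together with \(\E_w[\psi(w)^2]=1\) would force \(\psi(w)^2\equiv 1\); already on the cycle \(\mathbb{Z}_n\) the eigenvector \(\psi(j)=\sqrt{2}\cos(2\pi j/n)\) has \(\|\psi\|_\infty^2=2\).

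With the correct bound, reverse Markov only yields \(P_w(\psi(w)^2\ge\varepsilon)\ge(1-\varepsilon)/\dim V_1^{(n)}\), which gives no uniform \(\varepsilon\) when \(\dim V_1^{(n)}\to\infty\) (e.g.\ the hypercube, where \(\dim V_1^{(n)}=n\)). The paper overcomes this by an additional averaging: instead of bounding a single \(\psi\), it integrates over the unit sphere in \(V_1^{(n)}\). Assuming (B) fails and applying Fubini produces a state \(w\) for which the spherical measure of \(\{\psi:\psi(w)^2<\varepsilon\}\) is at least \(1-\varepsilon\); since \(M_w=\dim V_1^{(n)}\) exactly, this set is a spherical band whose measure is bounded away from \(1\) uniformly in the dimension (heuristically, \(\psi(w)\) for random \(\psi\) is approximately \(N(0,1)\)), giving the contradiction. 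So your argument needs one more genuine idea --- the sphere average --- to handle the case of large eigenspace multiplicity.
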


By a \emph{transitive Markov chain} we mean a Markov chain with state space \( S \) and generator \( Q = (q_{ij})_{i,s \in S} \) such that there for any pair of states \( i, j \in S \) is a bijective function \( \varphi \colon S \to S \) such that \( \varphi(i) = j \) and \( q_{\varphi(i) \varphi(j)} = q_{ij} \). Let \( \Aut(X_n) \) denote the set of all such functions \( \varphi \) given some Markov chain \( X_n \). Note that a random walk on a vertex transitive graph is a transitive Markov chain.

We give a proof of Proposition~\ref{proposition: vertex transitive functions} after proving Proposition~\ref{proposition: noise stability existence equivalence}.

\begin{remark}
From Proposition~\ref{proposition: noise stability fourier} it is clear that the existence of noise stable nondegenerate sequences of Boolean functions is equivalent to that there must exists some integer \( k \) and a sequence of vectors \( (\psi^{(n)} )_{n \geq 1 } \) with \({ \psi^{(n)} \in \Psi_{k}^{(n)}} \), such that the distance between  \( \psi^{(n)} \) and \( f_n - \E[f_n] \), for a nondegenerate sequence \( (f_n)_{n \geq 1} \) of Boolean functions, is asymptotically zero. If this holds then clearly \emph{(B)} must happen, so one direction of the proposition above is obvious, although we formalize this argument in Lemma~\ref{lemma: third lemma}. The idea of the proof in the other direction is that if a noise stable sequence of functions exists, then there should be such a sequence which arises from truncating a sequence of functions \( (\psi^{(n)} )_{n\geq 1} \), where \( \psi^{(n)} \in \Psi_k^{(n)} \) for some \( k \geq 1 \). If this is not possible, we cannot have \emph{(B)}. This idea is very similar to the so called  \emph{best threshhold cut} algorithm which sweeps through all possible truncations of an eigenvector corresponding to the first eigenvalue in order to find a good cut in a graph, used in computer science for e.g. clustering (\cite{gm1998}).
\end{remark}

\begin{remark}
If for a sequence \( (X^{(n)} )_{n \geq 1} \) of Markov chains, \( {\lambda_{|S^{(n)}|}^{(n)}/\lambda_1^{(n)} = O(1)}\), then for all large enough \( k \),  \( \Span \big( \psi_0^{(n)}, \Psi_{k}^{(n)} \big) \equiv \mathbb{R}^{|S^{(n)}|} \), which implies that \emph{(B)}  holds in this case. In particular,  Proposition~\ref{proposition: noise stability existence equivalence} is consistent with Remark~\ref{remark: expander graphs}, stating that all sequences of Boolean functions on expander families are noise stable.
\end{remark}

Before moving on to the proof of Proposition~\ref{proposition: noise stability existence equivalence}, it should also be noted that there is in fact sequences of graphs such that no nondegenerate sequence of Boolean functions is noise stable with respect to the corresponding random walk. The next example provides such an example.
\begin{example}
\label{example: small set bottleneck}
Let \( G_n \) be the graph obtained by attaching a complete graph on \( n^2 \) vertices to a complete graph on \( n \) vertices by adding a single edge \( (v,w) \) between a vertex \( c \) in the  larger graph and a vertex \( w \) in the smaller graph. Consider the random walk \(X^{(n)} \) on \( G_n \). Let \( Q^{(n)} = (Q_{ij}^{(n)} )_{i,j \in V(G_n)} \) be the corresponding generator and \( \pi_n \) be the corresponding stationary distribution.
  \begin{figure}[H]
 \centering
 \begin{tikzpicture}[scale = 2]

\def\N{15}
\foreach \i in {0,...,\N}{
  \foreach \j in {\i,...,\N}{
    \draw ({cos(\i*360/(\N+1))},{sin(\i*360/(\N+1))}) -- ({cos((\j+1)*360/(\N+1)},{sin((\j+1)*360/(\N+1)});
  };
};
  \fill ({cos(0*360/(\N+1))},{sin(0*360/(\N+1))})  circle (1.5pt);

\draw[dashed] (1,0) -- (3,0);

\def\M{3}
\def\scaling{0.3}
\foreach \i in {0,...,\M}{
  \foreach \j in {\i,...,\M}{
    \draw ({3+\scaling*(1-cos(\i*360/(\M+1)))},{\scaling*sin(\i*360/(\M+1))}) -- ({3+\scaling*(1-cos((\j+1)*360/(\M+1))},{\scaling*sin((\j+1)*360/(\M+1)});
  };
};

  \fill ({3+\scaling*(1-cos(0*360/(\M+1)))},{\scaling*sin(0*360/(\M+1))})  circle (1.5pt);

\draw (1,0) node[above right] {\( v \)};
\draw (3,0) node[above left] {\( w \)};

\draw (-0.0,-1.3) node {\(K_{n^2}\)};
\draw (3.3,-1.3) node {\(K_n\)};

 \end{tikzpicture}
 \end{figure}
 
Write \( K_n \) and \( K_{n^2} \) for the two subgraphs. Then
\begin{equation*}
\begin{cases}
 \vol(G_n) &=  n^2(n^2-1)  + n(n-1) +2 \cr
\pi_n \left(  K_{n} \right) &=({ n \cdot(n - 1)+1})/{\vol(G_n)}  \cr 
\pi_n \left(  K_{n^2} \right) &=({ n^2 \cdot(n^2 - 1)+1})/{\vol(G_n)}  \cr
\pi_n  (v) &= {n^2}/{\vol(G_n)} \cr
\pi_n (w) &=  {n}/{\vol(G_n)} \cr
q^{(n)}_{vw} &= {1}/{n^2 } \cr
q^{(n)}_{wv} &= {1}/{n}
\end{cases}
\end{equation*}
Define a function \( f_n(u) \) for \( u \in V(G_n) \) by setting
\begin{equation*}
f_n(u) = 
\begin{cases}
1 &\textnormal{when } u \in K_{n^2} \cr
\frac{-\pi_n(K_{n^2})}{\pi_n(K_{n})} &\textnormal{when } u \in K_{n}
\end{cases}
\end{equation*}
Then \( \E [f_n(u)] = 0 \), and clearly \( f_n \not \equiv 0 \). Using~\eqref{equation: rayleigh quotient}, we obtain
\begin{equation*}
\begin{split}
\lambda_1^{(n)}
&= 
\min_{f \colon \E[f]=0, f \not \equiv 0} \frac{1}{2} \cdot \frac{\sum_{u_1,u_2 \in V(G_n)} \left| f(u_1) - f(u_2) \right|^2 \pi_n(u_1) \, q_{u_1u_2}^{(n)}}{\langle f,f\rangle}
\\[1ex]
&\leq 
\frac{1}{2} \cdot \frac{\sum_{u_1,u_2\in V(G_n)} \left| f_n(u_1) - f_n(u_2) \right|^2 \pi_n(u_1) \, q_{u_1u_2}^{(n)}}{\langle f_n, f_n \rangle }\\
\\[-2ex]
&=
\frac{1}{2} \cdot \frac{ \left( 1 + \frac{\pi_n(K_{n^2})}{\pi_n(K_{n})} \right)^2 \cdot \left( \pi_n(v) \, q_{vw}^{(n)} + \pi_n(w) \, q_{wv}^{(n)} \right)  }{\pi_n(K_{n^2}) \cdot 1^2 + \pi_n(K_{n}) \cdot \left( \frac{\pi_n(K_{n^2})}{\pi_n(K_{n})}\right)^2} 
\end{split}
\end{equation*}
Using that random walks are reversible, and simplifying, we obtain
\begin{equation*}
\begin{split}
\lambda_1^{(n)} 
&\leq
\frac{1}{2} \cdot \frac{ \left( 1 + \frac{\pi_n(K_{n^2})}{\pi_n(K_{n})} \right)^2 \cdot \left( \pi_n(v) \, q_{vw}^{(n)} + \pi_n(w) \, q_{wv}^{(n)} \right)  }{\pi_n(K_{n^2}) \cdot 1^2 + \pi_n(K_{n}) \cdot \left( \frac{\pi_n(K_{n^2})}{\pi_n(K_{n})}\right)^2} 
\\&= 
 \frac{ \left( 1 + \frac{\pi_n(K_{n^2})}{\pi_n(K_{n})} \right)^2 \cdot \pi_n(v) \, q_{vw}^{(n)}   }{\pi_n(K_{n^2}) + \pi_n(K_{n}) \cdot \left( \frac{\pi_n(K_{n^2})}{\pi_n(K_{n})}\right)^2} 
= 
 \frac{ \left(  \frac{\pi_n(K_n) + \pi_n(K_{n^2})}{\pi_n(K_{n})} \right)^2 \cdot \pi_n(v) \, q_{vw}^{(n)}   }{\pi_n(K_{n^2}) \cdot  \frac{\pi_n(K_{n})  + \pi_n(K_{n^2})}{\pi_n(K_{n})}} 
\\&= 
 \frac{ \left(\pi_n(K_n) + \pi_n(K_{n^2}) \right)  \cdot \pi_n(v) \, q_{vw}^{(n)}   }{\pi_n(K_{n}) \cdot \pi_n(K_{n^2}) } 
= 
 \frac{ 1 \cdot \pi_n(v) \, q_{vw}^{(n)}   }{\pi_n(K_{n}) \cdot \pi_n(K_{n^2}) } 
= 
 \frac{ 1 \cdot n^2/\vol(G_n) \cdot 1/n^2  }{\pi_n(K_{n}) \cdot \pi_n(K_{n^2}) } 
\\&=\frac{1}{\vol(G_n)} \cdot \frac{ 1  }{ \pi_n(K_{n})\cdot \pi_n(K_{n^2}) } \asymp \frac{1}{n^2}.
\end{split}
\end{equation*}

As \( \pi_n (K_{n}) \to 0 \), no nondegenerate sequence of Boolean functions with domains \( (V(G_n))_{n \geq 1} \) can be noise stable with respect to \( (X^{(n)})_{n \geq 1} \) due to its values on \( (K_{n})_{n \geq 1 } \). As the mixing time of \( K_{n^2}\) is of order 1 for each \( n \), while the calculations above show that the the relaxation time of \( G_n \) is of order \(n^2 \), this implies that all nondegenerate sequences of Boolean functions are noise sensitive with respect to \( (X^{(n)})_{n \geq 1} \).
 \end{example}

Not much is known about neither the eigenvectors of transition matrices of Markov chains in general, nor of the eigenvectors of transition matrices  of random walks on graphs in particular. However, in recent years, conditions similar to (B) have been studied for random graphs. To simplify the notation, consider first the following definition, used in~\cite{dp2012} and~\cite{esy2009}. 

\begin{definition}
Let \( T \) be a subset of \( \{ 1,2,\ldots, n \} \) of size \( L \) and let \( \delta > 0 \) be a fixed number.  A vector \( \psi \in \mathbb{R}^n \)  exhibits \emph{\( (T, \delta ) \)-localization} if 
\[
\| \psi(\cdot)  \mathbf{1}_{\cdot \in  T}(\cdot) \|_2^2 \geq (1 - \delta)  \| \psi\|_2^2.
\]
A vector \( \psi \) is said to be \emph{\( (L,\delta)\)-localized} is there is some set \( T \subset \{ 1,2,\ldots, n\} \) with \( \| \mathbf{1}_{T} \|_2^2 \leq L \) such that \( \psi \) is \( (T,\delta) \)-localized.
\end{definition}

Note that the definition above is dependent of the chosen basis. If we let \( \| \cdot \|_2^2 = \langle \cdot, \cdot \rangle \) and note that the opposite of \( (B) \) is given by
\begin{enumerate}
\item[\textit{(\(\neg\)B)}]  \textit{for each \( k> 0 \) and  \( \varepsilon > 0 \)  there is arbitrarily large \( n \) such that }
\begin{equation*}
P(\psi^{(n)}(w)^2 < \varepsilon) \geq 1 - \varepsilon
\end{equation*}
\textit{for all  \(\psi^{(n)} \in \Psi_k^{(n)} \) with \( \langle \psi^{(n)}, \psi^{(n)} \rangle =1\). }
\end{enumerate}
It is now easy to show that \( (\neg B ) \) is equivalent to  
\begin{enumerate}
\item[\textit{(\(\neg\)B)}]  \textit{for each \( k> 0 \) and  \( \varepsilon > 0 \)  there is arbitrarily large \( n \) such that all  \(\psi^{(n)} \in \Psi_k^{(n)} \) is \( (\varepsilon n, \varepsilon)\)-localized.}
\end{enumerate}

For the usual \( L_2 \)-norm on \( \mathbb{R}^n \), which agrees with \( \langle \cdot , \cdot\rangle \) only for Markov chains with uniform stationary distribution \( \pi \), the following is known.

\paragraph{\textbf{Corollary 3.2 in~\cite{dll2011}}} \textit{For every \( p > 0 \) there is \( \delta = \delta(p)>0 \) and \( \varepsilon(p)\in(0,0.5) \) such that for almost all \( G \sim G(n,p) \), no eigenvector of the adjency matrix except \( \psi_0 \) is \( (\varepsilon n,\delta)\)-localized. }

\paragraph{\textbf{Theorem 3 in~\cite{dp2012}}}%
\textit{Fix \( \delta > 0 \). Let \( d_n = (\log n)^\gamma \) for some \( \gamma > 0 \) and let \( T_n \) be a deterministic sequence of sets of size }
\[
L_n = o(2/(\exp(d_n^{-\alpha}) - \exp(-d_n^{-\alpha})) )
\]
\textit{where  \( \alpha \in (0,\min (1,1/\gamma) \). Then if \( A_n \) is the adjency matrix of a random regular graph with valency \( d_n \), then}
\[
P(\textnormal{no eigenvector of } A_n \textnormal{ is } (T_n,\delta) \textnormal{-localized}) \geq 1-o(1/d_n).
\]

\paragraph{\textbf{Theorem 7.1 in~\cite{esy2009}}}%
\textit{Let \( H \) be an \( n \times  n \) Hermitean random matrix from a Wigner ensamble satisfying two technical conditions (see~\cite{esy2009}). Suppose that \( L \in [n]\) and \( \eta > 0 \)  are chosen such that \( \eta \) and \( \nu \coloneqq L/n \) is sufficiently small. Then there is a constant \( c > 0 \) which do not depend on \( n \) such that }
\[
P(\textnormal{no eigenvector of \( H \) is } (L,\eta) \textnormal{-localized}) \geq 1-e^{-cn}.
\]

Note that none of the results above is exactly what we need. The first and third results are both of the type we need. However, in the first case, the result is for eigenvectors of the adjency matrix instead of for the generator and in addition, the norm is wrong. In the third case, the result is not for transition matrices of Markov chains at all, but is of the correct type. In the second case,  the eigenvectors are eigenvectors for the generator as well as the random graphs are regular, and for the same reason the norm is correct as well. However, there are two other problems with this result. Firstly, we need a result for all sequences of sets and not for a beforehand chosen sequence. More problematic for us  however, is that the measure of the sets \( T_n \) will tend to zero. 
 
Given what is currently known however, there seems to be nothing preventing that \( (B)\) would hold for a large family of Markov chains, and further results in this direction would be interesting in the light of Proposition~\ref{proposition: noise stability existence equivalence}.

We will now give a proof of Proposition~\ref{proposition: noise stability existence equivalence} through a sequence of lemmas. The first of these lemmas is particularly interesting since it provides means by which one can validate that a sequence of truncated real-valued functions is noise stable.

\begin{lemma}
Let \( (X^{(n)})_{n \geq 1 } \) be a sequence of   reversible and irreducible continuous time Markov chains, and let \( g(\varepsilon) \) be a real valued function such that \( \varepsilon /g(\varepsilon)^2 \to 0 \) as \( \varepsilon \to 0 \).
Suppose that there is \( k > 0 \), \( \delta>0 \), a sequence \( (\psi^{(n)})_{n \geq 1 } \) where \( \psi^{(n)} \in \Psi_k^{(n)} \) and \( \langle \psi^{(n)}, \psi^{(n)} \rangle = 1 \), and a sequence \( ( c_n)_{n \geq 1} \) such that for all \( n \geq 1 \),
\begin{enumerate}[label=(\roman*)]
\item \( P(\psi^{(n)}(w) \geq c_n) > \delta \)
\item \( P(\psi^{(n)}(w) < c_n) > \delta \)
\item \( \lim_{\varepsilon \to 0} \sup_n P(\psi^{(n)}(w) \in [c_n -{g(\varepsilon)}, c_n + {g(\varepsilon)}] ) = 0 \)
\end{enumerate}
Then the sequence  \( ( \mathbf{1}_{\psi^{(n)} \geq c_n} )_{n\geq 1}\) is nondegenerate and noise stable with respect to \( (X^{(n)})_{n\geq 1} \).
\label{lemma: technical part of first lemma}
\end{lemma}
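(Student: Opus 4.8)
The plan is to use the spectral characterization of noise stability from Proposition~1.7, so it suffices to show that for every $\delta' > 0$ there is $k'$ such that $\sup_n \sum_{i : \lambda_i^{(n)} \geq k' \lambda_1^{(n)}} \widehat{f_n}(i)^2 < \delta'$, where $f_n = \mathbf{1}_{\psi^{(n)} \geq c_n}$. The key idea is that $\psi^{(n)}$ lies in the low-frequency space $\Psi_k^{(n)}$, so if we can show that $f_n$ is $L^2$-close to a scalar multiple (plus constant) of $\psi^{(n)}$, then $f_n$ itself has almost all of its Fourier mass on eigenvalues $\leq k\lambda_1^{(n)}$, and Proposition~1.7 applies with $k' = k$. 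So the real content is an $L^2$-approximation statement: I would first establish nondegeneracy from (i) and (ii) (they force $\Var(f_n)$ to be bounded away from $0$, since $P(f_n = 1) = P(\psi^{(n)}(w) \geq c_n) \in (\delta, 1-\delta)$), and then turn to the approximation.

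For the approximation, the natural move is to compare $f_n$ with an affine function of $\psi^{(n)}$. Since $f_n$ is the indicator of $\{\psi^{(n)} \geq c_n\}$, it agrees with a suitably scaled-and-shifted version of $\psi^{(n)}$ away from the "transition region" $\{\psi^{(n)}(w) \in [c_n - g(\varepsilon), c_n + g(\varepsilon)]\}$: on the event $\psi^{(n)}(w) \geq c_n + g(\varepsilon)$ we have $f_n = 1$ while $(\psi^{(n)}(w) - c_n)/g(\varepsilon) \geq 1$, and symmetrically below, so a truncation $\phi^{(n)} := \max(-1, \min(1, (\psi^{(n)} - c_n)/g(\varepsilon)))$ equals $2f_n - 1$ outside the transition region. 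I would bound $\E[(\phi^{(n)} - (2f_n-1))^2]$ by noting the integrand is supported on the transition region and is there at most $4$; hypothesis (iii) makes this uniformly small as $\varepsilon \to 0$. Separately I need to control how far $\phi^{(n)}$ is from its untruncated counterpart $(\psi^{(n)} - c_n)/g(\varepsilon)$: the truncation only changes values where $|\psi^{(n)} - c_n| > g(\varepsilon)$, and there $(\psi^{(n)} - c_n)^2/g(\varepsilon)^2 \leq (\psi^{(n)})^2$-type bounds combined with $\|\psi^{(n)}\|_2^2 = 1$ give a bound of order $1/g(\varepsilon)^2$ on $\E[((\psi^{(n)}-c_n)/g(\varepsilon))^2]$ minus a controlled piece — this is where the hypothesis $\varepsilon/g(\varepsilon)^2 \to 0$ is designed to be used, to balance the size of the transition region (which is $O(\varepsilon)$) against the $1/g(\varepsilon)^2$ scaling. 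Since $(\psi^{(n)} - c_n)/g(\varepsilon)$ is itself affine in $\psi^{(n)} \in \Psi_k^{(n)}$ (plus a constant, i.e.\ a multiple of $\psi_0^{(n)}$), it has all its Fourier weight on frequencies $0$ and those with $\lambda_i^{(n)} \le k\lambda_1^{(n)}$; pushing the $L^2$-errors through, $f_n - \E[f_n]$ is within $o(1)$ (as $\varepsilon \to 0$, uniformly in $n$) of the projection of $\psi^{(n)}$ onto $\Psi_k^{(n)}$, so $\sup_n \sum_{i: \lambda_i^{(n)} \ge k \lambda_1^{(n)}} \widehat{f_n}(i)^2 \to 0$, giving the Fourier-tail condition.

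The main obstacle I anticipate is getting the bookkeeping on the two error terms to genuinely be uniform in $n$ and to collapse under a single $\varepsilon \to 0$ limit: one error (the transition-region mismatch between $\phi^{(n)}$ and $2f_n - 1$) is $O(P(\text{transition region}))$ and vanishes by (iii), while the other (the truncation error in passing from $\phi^{(n)}$ to the affine function) scales like $1/g(\varepsilon)^2$ times a tail probability, and I must check that $\|\psi^{(n)}\|_2 = 1$ together with $\varepsilon/g(\varepsilon)^2 \to 0$ is exactly enough to kill it — in particular one has to be careful that $c_n$ can drift with $n$, so I would use Chebyshev-type bounds on $\psi^{(n)}$ rather than anything assuming $c_n$ is bounded, and verify that (i)--(iii) pin down $c_n$ tightly enough (they force $c_n$ to sit near a quantile of $\psi^{(n)}$ with no mass concentrating nearby). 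Once these two errors are both shown to be $o_\varepsilon(1)$ uniformly in $n$, the conclusion is immediate from Proposition~1.7.
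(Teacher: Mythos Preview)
Your approach has a genuine gap. You try to show that the Fourier tail of \(f_n\) beyond \(k\lambda_1^{(n)}\) is small by approximating \(f_n\) in \(L^2\) by the affine function \((\psi^{(n)}-c_n)/g(\varepsilon)\) (via the intermediate truncation \(\phi^{(n)}\)). But the quantity \(\sum_{i:\lambda_i^{(n)}\ge k\lambda_1^{(n)}}\hat f_n(i)^2\) does not depend on \(\varepsilon\) at all, so it cannot ``\(\to 0\) as \(\varepsilon\to 0\)''; either it is zero or it is not. In general it is not: take \(X^{(n)}\) the random walk on \(\mathbb{Z}_n\), \(\psi^{(n)}(j)=\sqrt{2}\cos(2\pi j/n)\in\Psi_1^{(n)}\), \(c_n=0\). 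Conditions (i)--(iii) hold, yet \(f_n\) is an indicator of a half-circle, a square wave whose Fourier tail beyond any fixed level is bounded away from zero uniformly in \(n\). So no \(L^2\) approximation by elements of \(\Span(\psi_0^{(n)},\Psi_k^{(n)})\) can have error tending to zero. Concretely, your ``truncation error'' \(\|\phi^{(n)}-(\psi^{(n)}-c_n)/g(\varepsilon)\|_2^2\) is of order \((1+c_n^2)/g(\varepsilon)^2\), which blows up as \(\varepsilon\to 0\); the hypothesis \(\varepsilon/g(\varepsilon)^2\to 0\) is far too weak to kill a \(1/g(\varepsilon)^2\) term.

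The paper's proof avoids the Fourier characterization entirely and works directly with the definition of noise stability. Writing \(J_n(\varepsilon)=[c_n-g(\varepsilon),c_n+g(\varepsilon)]\), one observes that if \(\psi^{(n)}(X_0^{(n)})\notin J_n(\varepsilon)\) and yet \(f_n(X_0^{(n)})\neq f_n(X_{\varepsilon t_{rel}^{(n)}}^{(n)})\), then necessarily \(|\psi^{(n)}(X_0^{(n)})-\psi^{(n)}(X_{\varepsilon t_{rel}^{(n)}}^{(n)})|\ge g(\varepsilon)\). Chebyshev then gives a bound involving \(\E\bigl[(\psi^{(n)}(X_0^{(n)})-\psi^{(n)}(X_{\varepsilon t_{rel}^{(n)}}^{(n)}))^2\bigr]/g(\varepsilon)^2\), and this \emph{dynamical} second moment is exactly \(2\sum_i(1-e^{-\varepsilon\lambda_i^{(n)}/\lambda_1^{(n)}})\hat\psi^{(n)}(i)^2\le 2(1-e^{-\varepsilon k})\le 2\varepsilon k\) since \(\psi^{(n)}\in\Psi_k^{(n)}\). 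The resulting bound \(P(\psi^{(n)}(X_0^{(n)})\in J_n(\varepsilon))+2\varepsilon k/\bigl(g(\varepsilon)^2\,P(\psi^{(n)}(X_0^{(n)})\notin J_n(\varepsilon))\bigr)\) tends to zero by (iii) and the assumption \(\varepsilon/g(\varepsilon)^2\to 0\). The point is that the spectral information about \(\psi^{(n)}\) is used to control how fast \(\psi^{(n)}\) \emph{moves} under the chain, not to place \(f_n\) near a low-frequency subspace.
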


\begin{proof}
Note first that for all functions \( f \) with range \( \{ 0,1 \} \),
\begin{equation}
P(f(X_0) \not = f(X_{t})) = \E \left[ (f(X_0)-f(X_{t}))^2\right] = 2\sum_{i=1}^{|S^{(n)}|-1} (1-e^{-  \lambda_i t}) \hat f(i)^2
\label{equation: inf norm and stability 1}
\end{equation}
where the last equality, by the proof of Proposition~\ref{proposition: noise sensitivity fourier} in fact holds for all real valued functions with the same domain as \( f \).

Set \( J_n(\varepsilon) =  [c_n - g(\varepsilon), c_n + g(\varepsilon)] \) and \( f_n = \mathbf{1}_{\psi^{(n)} \geq c_n} \). Then \( ( f_n)_{n \geq 1 } \) is nondegenerate by (i) and (ii).

To show that \( (f_n)_{n \geq 1} \) will be noise stable with respect to \( (X^{(n)})_{n \geq 1} \), note that
\begin{equation*}
\begin{split}
P(f_n(X_0^{(n)}) \not = f_n(X^{(n)}_{\varepsilon t_{rel}^{(n)}})) 
&= 
\E \left[ (f_n(X^{(n)}_0)-f_n(X^{(n)}_{\varepsilon t_{rel}^{(n)}}))^2\right] 
\\&\leq 
P(\psi^{(n)}(X^{(n)}_0) \in J_n(\varepsilon) ) +P\left(\left| \psi^{(n)}(X_0^{(n)})-\psi^{(n)}(X^{(n)}_{\varepsilon t_{rel}^{(n)}}) \right|  \geq {g(\varepsilon)} \mid \psi^{(n)}(X_0^{(n)}) \not \in J_n(\varepsilon)  \right)
\\&\leq
P(\psi^{(n)}(X^{(n)}_0) \in J_n(\varepsilon) ) + \frac{\E \left[ (\psi^{(n)}(X_0^{(n)})-\psi^{(n)}(X^{(n)}_{\varepsilon t_{rel}^{(n)}}))^2  \mid \psi^{(n)}(X_0^{(n)}) \not \in J_n(\varepsilon)  \right]}{g(\varepsilon)^2}
\\&\leq
P(\psi^{(n)}(X^{(n)}_0) \in J_n(\varepsilon) ) + \frac{\E \left[ (\psi^{(n)}(X_0^{(n)})-\psi^{(n)}(X^{(n)}_{\varepsilon t_{rel}^{(n)}}))^2   \right]}{g(\varepsilon)^2 P( \psi^{(n)}(X_0^{(n)}) \not \in J_n(\varepsilon) )}
\\&\leq
P(\psi^{(n)}(X^{(n)}_0) \in J_n(\varepsilon) ) + \frac{2\sum_i (1-e^{-  \varepsilon \lambda_i t_{rel}^{(n)}}) \hat \psi^{(n)}(i)^2}{g(\varepsilon)^2 P( \psi^{(n)}(X_0^{(n)}) \not \in J_n(\varepsilon) )}
\\&\leq
P(\psi^{(n)}(X^{(n)}_0) \in J_n(\varepsilon) ) + \frac{2(1-e^{- \varepsilon k})  \sum_i  \hat \psi^{(n)}(i)^2}{g(\varepsilon)^2 P( \psi^{(n)}(X_0^{(n)}) \not \in J_n(\varepsilon) )}
\\&=
P(\psi^{(n)}(X^{(n)}_0) \in J_n(\varepsilon) ) + \frac{2(1-e^{- \varepsilon k}) }{g(\varepsilon)^2 P( \psi^{(n)}(X_0^{(n)}) \not \in J_n(\varepsilon) )}
\\&\leq
P(\psi^{(n)}(X^{(n)}_0) \in J_n(\varepsilon) ) + \frac{2\varepsilon k}{g(\varepsilon)^2 P( \psi^{(n)}(X_0^{(n)}) \not \in J_n(\varepsilon) )}.
\end{split}
\end{equation*}
Using (iii), we obtain
\begin{equation*}
\lim_{\varepsilon \to 0} \sup_n P(f_n(X_0^{(n)}) \not = f_n(X^{(n)}_{\varepsilon t_{rel}^{(n)}})) 
=
\lim_{\varepsilon \to 0} \sup_n
\left\{ P(\psi^{(n)}(w) \in J_n(\varepsilon) ) + \frac{2\varepsilon k}{g(\varepsilon)^2 P( \psi^{(n)}(X_0^{(n)}) \not \in J_n(\varepsilon) )} \right\}
=
0
\end{equation*}
i.e. \( (f_n)_{n \geq 1 } \) is noise stable with respect to \( (X^{(n)})_{n \geq 1} \).
\end{proof}

\begin{lemma}
\label{lemma: first lemma}
Let \( (X^{(n)})_{n \geq 1} \) be a sequence of reversible and irreducible continuous time Markov chains. Then either
\begin{enumerate}
\item[(C)]  there is  \( k\geq 1 \), a sequence  \( (c_n)_{n \geq 1} \) and a sequence \( (\psi^{(n)})_{n \geq 1} \), where \( \psi^{(n)} \in \Psi_k^{(n)} \) such that \( ( \mathbf{1}_{\psi^{(n)}(w)>c_n} )_{n \geq 1} \) is nondegenerate and noise stable with respect to \( (X^{(n)})_{n \geq 1} \), or
\item[(\(\neg\)B)]  for each \( k> 0 \) and  \( \varepsilon > 0 \)  there is arbitrarily large \( n \) such that  % subsequence \( n''' \) such that
\begin{equation*}
P(\psi^{(n)}(w)^2 < \varepsilon) \geq 1 - \varepsilon
\end{equation*}
for all  \(\psi^{(n)} \in \Psi_k^{(n)} \) with \( \langle \psi^{(n)}, \psi^{(n)} \rangle =1\). 
\end{enumerate}
Note in particular that this entails that \( (B) \Rightarrow (C) \Rightarrow (A) \).
\end{lemma}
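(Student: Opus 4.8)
The plan is to prove the dichotomy by showing that the failure of $(\neg B)$ forces (C); since (C) exhibits a concrete nondegenerate noise stable Boolean sequence and hence implies (A), this also yields the entailment $(B)\Rightarrow(C)\Rightarrow(A)$. The first observation is that $(\neg B)$ is the logical negation of condition (B) of Proposition~\ref{proposition: noise stability existence equivalence}: if $(\neg B)$ fails, then for some $\epsilon_0\in(0,1)$ and some integer $k_0\ge 1$ we may pick, for each large $n$, a unit vector of $\Psi_{k_0}^{(n)}$ witnessing the failure and an arbitrary unit vector of $\Psi_{k_0}^{(n)}$ for the finitely many remaining $n$, obtaining a sequence $(\psi^{(n)})_{n\ge1}$ with $\psi^{(n)}\in\Psi_{k_0}^{(n)}$, $\langle\psi^{(n)},\psi^{(n)}\rangle=1$, and an $N_0$ with $P((\psi^{(n)}(w))^2<\epsilon_0)<1-\epsilon_0$ for all $n\ge N_0$; this is exactly (B). So assume (B) with this notation. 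The goal is then to construct thresholds $c_n$ for which the hypotheses (i)--(iii) of Lemma~\ref{lemma: technical part of first lemma} hold with $g(\varepsilon)=\varepsilon^{1/3}$, and to invoke that lemma.

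Write $\mu_n$ for the law of $\psi^{(n)}(w)$ under $\pi_n$. Since $\psi^{(n)}\in\Psi_{k_0}^{(n)}$ is orthogonal to the constants and has unit norm, $\mu_n$ has mean $0$ and second moment $1$, and $\mu_n(\{|x|\ge\sqrt{\epsilon_0}\})\ge\epsilon_0$ for $n\ge N_0$ by (B). Set $R:=8\epsilon_0^{-3/2}$. Chebyshev's inequality gives $\mu_n(\{|x|\ge R\})\le R^{-2}$, so $\mu_n$ puts at least $\epsilon_0-R^{-2}\ge\epsilon_0/2$ of its mass in the annulus $\{\,\sqrt{\epsilon_0}\le|x|<R\,\}$; replacing $\psi^{(n)}$ by $-\psi^{(n)}$ for those $n\ge N_0$ where the negative half of the annulus is the heavier one (harmless, as $\Psi_{k_0}^{(n)}$ is a linear subspace), we may assume $\mu_n([\sqrt{\epsilon_0},R))\ge\epsilon_0/4$ for all $n\ge N_0$. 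Plugging this into $\int x\,d\mu_n=0$ and using $\int_{\{|x|\ge R\}}|x|\,d\mu_n\le R^{-1}\int x^2\,d\mu_n=R^{-1}$, one deduces $\mu_n((-\infty,0))\ge\epsilon_0^3/64$. Consequently \emph{every} threshold $c_n\in[0,\sqrt{\epsilon_0}]$ meets hypotheses (i) and (ii) of Lemma~\ref{lemma: technical part of first lemma} with a $\delta>0$ depending only on $\epsilon_0$, because $\mu_n([c_n,\infty))\ge\mu_n([\sqrt{\epsilon_0},\infty))\ge\epsilon_0/4$ and $\mu_n((-\infty,c_n))\ge\mu_n((-\infty,0))\ge\epsilon_0^3/64$.

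It remains to place $c_n$ inside $[0,\sqrt{\epsilon_0}]$ so that $\mu_n$ is uniformly anti-concentrated about $c_n$ at all small scales. Fix dyadic scales $\rho_j=2^{-j}\rho_0$, $j\ge0$, with $\rho_0=\rho_0(\epsilon_0)>0$ to be chosen, and put $B_j:=\{c:\mu_n([c-\rho_j,c+\rho_j])>\sqrt{\rho_j}\}$. By Tonelli, $\int_{\mathbb{R}}\mu_n([c-\rho_j,c+\rho_j])\,dc=2\rho_j$, so the Lebesgue measure of $B_j$ is at most $2\sqrt{\rho_j}$, whence $\sum_{j\ge0}|B_j|\le 2\sqrt{\rho_0}/(1-2^{-1/2})<\sqrt{\epsilon_0}/2$ provided $\rho_0<\epsilon_0(1-2^{-1/2})^2/16$. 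For such $\rho_0$ the set $[0,\sqrt{\epsilon_0}]\setminus\bigcup_{j\ge0}B_j$ is nonempty; choose $c_n$ in it. Interpolating over scales (for $0<\rho\le\rho_0$ pick $j$ with $\rho_{j+1}<\rho\le\rho_j$ and use $\mu_n([c_n-\rho,c_n+\rho])\le\mu_n([c_n-\rho_j,c_n+\rho_j])\le\sqrt{\rho_j}<\sqrt{2\rho}$) gives $\mu_n([c_n-\rho,c_n+\rho])\le\sqrt2\,\sqrt{\rho}$ for all $0<\rho\le\rho_0$; in particular $\mu_n$ has no atom at $c_n$. With $g(\varepsilon)=\varepsilon^{1/3}$, which satisfies $\varepsilon/g(\varepsilon)^2\to0$, this yields $\sup_{n\ge N_0}P\big(\psi^{(n)}(w)\in[c_n-g(\varepsilon),c_n+g(\varepsilon)]\big)\le\sqrt2\,\varepsilon^{1/6}\to0$. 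For the finitely many $n<N_0$, take $\psi^{(n)}=\psi_1^{(n)}$ and let $c_n$ lie strictly between two consecutive values attained by $\psi_1^{(n)}$, chosen to give both sides positive mass; (i)--(iii) then hold for these $n$ too, after shrinking $\delta$, since $\mu_n$ again has no atom at $c_n$. Lemma~\ref{lemma: technical part of first lemma} now gives that $(\mathbf 1_{\psi^{(n)}\ge c_n})_{n\ge1}$ is nondegenerate and noise stable with respect to $(X^{(n)})_{n\ge1}$; and since $\mu_n(\{c_n\})=0$ and $\pi_n$ has full support, $\mathbf 1_{\psi^{(n)}\ge c_n}=\mathbf 1_{\psi^{(n)}>c_n}$ as functions on $S^{(n)}$. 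This is (C), so either (C) or $(\neg B)$ holds, and in particular $(B)\Rightarrow(C)\Rightarrow(A)$.

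The step I expect to be the real obstacle is the third paragraph: constructing a \emph{single} threshold that is anti-concentrated at \emph{every} small scale simultaneously (not one scale at a time), with all the constants --- $\delta$, $\rho_0$, and the exponent in $\varepsilon^{1/6}$ --- independent of $n$. This, together with the "mass on both sides" estimate of the second paragraph, is precisely where the hypothesis (B) is used: it is what prevents $\mu_n$ from collapsing toward a point mass or from being so lopsided that no level has substantial mass on both sides, and hence what makes the two-sided bound and the nonemptiness of the low-density subset of $[0,\sqrt{\epsilon_0}]$ hold with constants that do not deteriorate in $n$.
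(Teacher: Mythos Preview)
Your proof is correct and takes a genuinely different route from the paper's. The paper argues by contrapositive: assuming (C) fails, it shows that for \emph{every} sequence \((\psi^{(n)})\) with \(\psi^{(n)}\in\Psi_k^{(n)}\), the failure of hypothesis~(iii) of Lemma~\ref{lemma: technical part of first lemma} at all thresholds in a ``middle'' interval \(I_\delta(\psi^{(n)})\) forces \(|I_\delta(\psi^{(n)})|\) to be small (packing intervals of fixed mass), and then uses the mean-zero constraint (Lemma~\ref{lemma: delocalization around zero}) to center \(I_\delta\) near the origin, yielding \((\neg B)\). You instead argue directly: given the single witness sequence from (B), you first extract uniform two-sided mass at any threshold in \([0,\sqrt{\epsilon_0}]\) from mean zero, unit variance, and the Chebyshev annulus, and then construct the good threshold by a dyadic Tonelli/Markov anti-concentration argument, showing that the set of thresholds that are bad at \emph{some} scale has small Lebesgue measure. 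This is cleaner and more explicit than the paper's argument; it avoids the somewhat delicate interplay of \(\limsup/\sup/\inf\) (Lemma~\ref{lemma: technical lemma on the combination of limits and sups}) and gives concrete constants \((\delta,\rho_0)\) and a concrete \(g(\varepsilon)=\varepsilon^{1/3}\). The paper's approach, on the other hand, yields the qualitative picture that if thresholding \emph{never} produces a stable function then all of \(\Psi_k^{(n)}\) must concentrate near zero, which is slightly more informative about the structure of \(\Psi_k^{(n)}\), though not needed for the statement.
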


\begin{proof}
For each \( \delta > 0 \), \( n \geq 1\), \( k > 0 \) and \( \psi^{(n)} \in \Psi_k^{(n)} \), define
\[
c_\delta(\psi^{(n)}) = \sup \{ c \in \mathbb{R} \colon P(\psi^{({n})}(w) \leq c ) < \delta \} 
\]
and
\[
c_{1-\delta}(\psi^{(n)})  = \inf \{ c \in \mathbb{R} \colon P(\psi^{({n})}(w) \geq c )  < \delta \}  .
\]
and set \(  I_\delta(\psi^{(n)}) \coloneqq [ c_\delta^{(n)}(\psi) , c_{1-\delta}(\psi^{(n)})] \). Note in particular that this implies that
\begin{equation}\label{equation: measure of I}
P(\psi^{({n})}(w) \in I_\delta(\psi^{(n)})) \geq 1 - 2\delta.
\end{equation}

Suppose that \( (C) \) does not hold. Then the assumptions of Lemma~\ref{lemma: technical part of first lemma} cannot be satisfied, and in particular, for all \( k > 0 \) and \( \delta > 0 \), there is \( \alpha = \alpha (k, \delta) > 0 \) such that 
\[
\limsup_{\varepsilon' \to 0}\sup_n \inf_{c \in I_\delta (\psi^{(n)})}P(\psi^{(n)}(w) \in [c -\sqrt{\varepsilon'}, c + \sqrt{\varepsilon'}] ) \geq \alpha
\]
for all \( (\psi^{(n)})_{n\geq 1} \) with \( \psi^{(n)} \in \Psi_k^{(n)} \) and \( \langle \psi^{(n)}, \psi^{(n)} \rangle = 1\). 

By Lemma~\ref{lemma: technical lemma on the combination of limits and sups}, this implies that for any \( \varepsilon' > 0 \)  there is  \( n(\varepsilon',\delta)> 0 \) such that
\[
 \inf_{c \in I_\delta (\psi^{(n(\varepsilon',\delta))})}P(\psi^{(n(\varepsilon',\delta))}(w) \in [c -\sqrt{\varepsilon'}, c + \sqrt{\varepsilon'}] ) \geq \alpha/2
\]
for all \( \psi^{(n(\varepsilon',\delta))} \in \Psi_k^{(n(\varepsilon',\delta))} \). This in turn implies that for any \( \psi^{(n(\varepsilon',\delta))} \in \Psi_k^{(n(\varepsilon',\delta))} \) and for all \( c \in I_\delta (\psi^{(n(\varepsilon',\delta))}) \)
\begin{equation}\label{eq: weight of interval}
P(\psi^{(n(\varepsilon',\delta))}(w) \in [c -\sqrt{\varepsilon'}, c + \sqrt{\varepsilon'}] ) \geq \alpha/2.
\end{equation}
As the length of \(  I_{\delta}(\psi^{(n(\varepsilon',\delta))}) \) is \( c_{1-\delta}^{(n(\varepsilon',\delta))} - c_{\delta}^{(n(\varepsilon',\delta))} \) and any interval of length \( 2 \sqrt{\varepsilon'} \) with midpoint contained in \(  I_{\delta}(\psi^{(n(\varepsilon',\delta))}) \) has measure at least \( \alpha / 2 \) by~\ref{eq: weight of interval}, we obtain
\begin{equation*}
1 \geq  P(\psi^{(n(\varepsilon',\delta))}(w) \in I_{\delta}(\psi^{(n(\varepsilon',\delta))})) 
\geq
\frac{c_{1-\delta}^{(n(\varepsilon',\delta))} - c_{\delta}^{(n(\varepsilon',\delta))}}{2\sqrt{\varepsilon'}} \cdot  \frac{\alpha}{2}
=
\frac{|I_\delta(\psi^{(n(\varepsilon',\delta))})|}{\sqrt{\varepsilon'}} \cdot  \frac{\alpha}{4}
\end{equation*}
or equivalently, that
\begin{equation}
|I_\delta(\psi^{(n(\varepsilon',\delta))}) | \leq\frac{4}{\alpha} \cdot \sqrt{\varepsilon'}.
\label{equation: interval width}
\end{equation}

Now recall that we need to show that \( (\neg B) \) holds, i.e. that there for any \( k>0 \) and \( \varepsilon > 0 \) is arbitrarily large \( n \) such that \( P(\psi^{(n)}(w)^2<\varepsilon) \geq 1 - \varepsilon \) for all \( \psi^{(n)} \in \Psi_k^{(n)} \) with \( \langle \psi^{(n)}, \psi^{(n)} \rangle = 1 \).
To this end, fix \( k>0 \) and \(\varepsilon >0\) and pick a sequence \( (\delta_j)_{j \geq 1} \) such that \( 2 \delta_j/(1-2\delta_j)^2 < \varepsilon/2 \) for all \( j \geq 1 \), and \( \lim_{j \to \infty} \delta_j = 0 \). Then by~\eqref{equation: measure of I},
\begin{equation}\label{equation: measure of I, 2}
P(\psi^{({n})}(w) \in I_{\delta_j}(\psi^{(n)})) \geq 1-2\delta_j > 1 - \varepsilon/2 > 1 - \varepsilon
\end{equation}
for all \( n \geq 1 \) and all sequences \( (\psi^{(n)})_{n \geq 1} \) with \( \psi^{(n)} \in \Psi_k^{(n)} \).

Pick \( \varepsilon'(j) \) such that \( \frac{4}{\alpha(k,\delta_j)} \cdot \sqrt{\varepsilon'(j)} \leq \varepsilon/2 \). By the derivations above, we can find \( n(\varepsilon'(\delta_j), \delta_j) \) such that 
\begin{equation}
|I_{\delta_j}(\psi^{(n(\varepsilon'(\delta_j),\delta_j))}) | \leq\frac{4}{\alpha(k,\delta_j)} \cdot \sqrt{\varepsilon'(\delta_j)} = \varepsilon/2
\label{equation: interval width conclusion}
\end{equation}
for all \( \psi^{(n(\varepsilon'(\delta_j),\delta_j))} \in \Psi^{(n(\varepsilon'(\delta_j),\delta_j))}_k \).

By applying Lemma~\ref{lemma: delocalization around zero} to the sets 
\[
 A_j = \{ w \in S^{((n(\varepsilon'(\delta_j),\delta_j)))} \colon \psi^{((n(\varepsilon'(\delta_j),\delta_j)))}(w) \in I_{\delta_j}(\psi^{((n(\varepsilon'(\delta_j),\delta_j)))}) \} 
 \]
 we get
\[
 \E[\psi^{(n(\varepsilon'(\delta_j),\delta_j))}(w) \mid \psi^{(n(\varepsilon'(\delta_j),\delta_j))}(w) \in I_{n(\varepsilon'(\delta_j),\delta_j)}]^2 \leq \frac{2\delta_j}{(1-2\delta_j)^2} \leq \varepsilon/2.
\]
Combining this with~\eqref{equation: interval width conclusion}, we obtain
\[
I_{\delta_j}(\psi^{(n(\varepsilon'(\delta_j),\delta_j))}) \subseteq [-\varepsilon/2 - \varepsilon/2 , \varepsilon/2+\varepsilon/2] = [-\varepsilon, \varepsilon].
\]
Summing up, we  have that
\begin{equation*}
\begin{split}
P(\psi^{({n(\varepsilon'(\delta_j),\delta_j)})}(w)^2<\varepsilon) &\geq P(|\psi^{({n(\varepsilon'(\delta_j),\delta_j)})}(w)|<\varepsilon) 
\\&\geq
P(|\psi^{(n)}(w)|<\varepsilon \mid \psi^{(n)}(w) \in I_\delta(\psi^{(n)})) \cdot P(\psi^{(n)}(w) \in I_\delta(\psi^{(n)}))
\\&\geq
1 \cdot (1-2\delta)
\\&\geq 1 - \varepsilon
\end{split}
\end{equation*}
for all sequences \( (\psi^{(n(\varepsilon'(\delta_j),\delta_j))})_{j \geq 1} \) with \( \psi^{(n(\varepsilon'(\delta_j),\delta_j))} \in \Psi_k^{(n(\varepsilon'(\delta_j),\delta_j))} \). 

The only thing which remains to do to finish the proof is to show that \( n(\varepsilon'(\delta_j),\delta_j) \) can be chosen arbitrarily large. To do this, it is enough to show that
\[
\lim_{j \to \infty}n(\varepsilon'(\delta_j),\delta_j) = \infty.
\]
To see this, note that if \( \delta < \min_{w \in S^{(n)}} \pi_n(w) \), then \( \psi^{(n)} \in I_\delta \) for all \( w \in S^{(n)} \), which implies that \( |I_\delta| > 0 \) as \( \langle \psi^{(n)}, \psi^{(n)} \rangle = 1 \) and \( \E [ \psi^{(n)} ]= 0 \). This contradicts that \( |I_\delta| < \frac{4}{\alpha} \cdot \sqrt{\varepsilon'} < \frac{\varepsilon}{2} \). As \( \lim_{j \to \infty } \delta_j  = 0 \) by assumption, this shows that
\[
n(\varepsilon'(\delta_j),\delta_j) \geq \inf \{ n \geq 1 \colon \delta \geq  \min_{w \in S^{(n)}} \pi_n(w) \}.
\]

This completes the proof.

\end{proof}

\begin{lemma}
\label{lemma: third lemma}
Let \( (X^{(n)})_{n \geq 1} \) be a sequence of reversible and irreducible continuous time Markov chains. Then if \( (f_n)_{n \geq 1} \) is a sequence of nondegenerate Boolean functions, both \( (A) \) and \( ( \neg B) \) cannot hold. In particular \( (A) \Rightarrow (B) \).
\end{lemma}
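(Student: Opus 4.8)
The plan is to assume that the sequence $(f_n)_{n\ge 1}$ is nondegenerate and noise stable with respect to $(X^{(n)})_{n\ge 1}$ — which is exactly what it means for $(f_n)_{n\ge1}$ to realize $(A)$ — and to extract from $(f_n)_{n\ge1}$ a sequence of unit vectors in the low-frequency spaces $\Psi_k^{(n)}$ that witnesses $(B)$. Since $(\neg B)$ is the negation of $(B)$, producing such a witness simultaneously shows that $(A)$ and $(\neg B)$ cannot both hold and that $(A)\Rightarrow(B)$.

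First I would fix the constants. Write $\mu_n=\E[f_n]$, so that $\Var(f_n)=\mu_n(1-\mu_n)$ because $f_n$ is Boolean. Nondegeneracy supplies $v>0$ with $\Var(f_n)>v$ for all large $n$, which forces $\mu_n\in[a,1-a]$ for all large $n$, where $a=a(v)\in(0,1/2)$ is determined by $v$. Set $\delta:=\min(a^3/8,\,v/2)>0$. Noise stability of $(f_n)_{n\ge1}$ together with Proposition~\ref{proposition: noise stability fourier} then yields $k\in\mathbb N$ with $\sup_n\sum_{i:\,\lambda_i^{(n)}\ge k\lambda_1^{(n)}}\hat f_n(i)^2<\delta$. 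I would then decompose $f_n-\mu_n=g_n+h_n$, with $g_n=\sum_{i:\,\lambda_1^{(n)}\le\lambda_i^{(n)}<k\lambda_1^{(n)}}\hat f_n(i)\,\psi_i^{(n)}\in\Psi_k^{(n)}$ and $h_n=\sum_{i:\,\lambda_i^{(n)}\ge k\lambda_1^{(n)}}\hat f_n(i)\,\psi_i^{(n)}$; then $\|h_n\|^2<\delta$ and, using $\Var(f_n)=\sum_{i\ge1}\hat f_n(i)^2$, one gets $\|g_n\|^2=\Var(f_n)-\|h_n\|^2\in(v/2,\,1/4]$ for all large $n$. In particular $g_n\not\equiv 0$, so $\psi^{(n)}:=g_n/\|g_n\|$ is a well-defined unit vector in $\Psi_k^{(n)}$.

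The heart of the argument is a lower bound on $P_w(\psi^{(n)}(w)^2\ge\varepsilon)$ for a suitable fixed $\varepsilon$, and this is where Booleanness is used. On the event $\{f_n(w)=1\}$ one has $g_n(w)=(1-\mu_n)-h_n(w)\ge a-|h_n(w)|$, so $g_n(w)\ge a/2$ whenever in addition $|h_n(w)|\le a/2$. By Markov's inequality $P_w(|h_n(w)|>a/2)\le 4\|h_n\|^2/a^2<4\delta/a^2\le a/2$, hence $P_w(g_n(w)\ge a/2)\ge\mu_n-a/2\ge a/2$. Since $\|g_n\|^2\le 1/4$, on $\{g_n(w)\ge a/2\}$ we have $\psi^{(n)}(w)^2=g_n(w)^2/\|g_n\|^2\ge a^2$, so $P_w(\psi^{(n)}(w)^2\ge a^2)\ge a/2$ for all large $n$. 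Taking $\varepsilon:=\min(a^2,\,a/4)$ then gives $P_w(\psi^{(n)}(w)^2<\varepsilon)\le 1-a/2<1-\varepsilon$ for all large $n$; filling in $(\psi^{(n)})_{n\ge1}$ by an arbitrary unit vector of $\Psi_k^{(n)}$ for the remaining finitely many indices, the resulting sequence together with $k$ and $\varepsilon$ is precisely the data in $(B)$.

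Finally, if $(\neg B)$ also held, then applying it with this $k$ and this $\varepsilon$ would produce arbitrarily large $n$ for which \emph{every} unit vector of $\Psi_k^{(n)}$ — in particular our $\psi^{(n)}$ — satisfies $P_w(\psi^{(n)}(w)^2<\varepsilon)\ge 1-\varepsilon$, contradicting the previous step. Hence $(A)$ and $(\neg B)$ cannot both hold, and in particular $(A)\Rightarrow(B)$. The only genuinely delicate point I expect is the pointwise estimate of the third paragraph, namely upgrading the $L^2$-level statement ``nearly all of $\Var(f_n)$ sits in $\Psi_k^{(n)}$'' to the pointwise spread condition $(B)$; the Boolean structure (the centered function equals a constant bounded away from $0$ on a set of mass bounded away from $0$) together with a crude Markov bound on the discarded tail $h_n$ is what makes this go through, and the one thing to be careful about is that $a$, $\delta$, $k$, $\varepsilon$ are chosen in exactly this dependency order.
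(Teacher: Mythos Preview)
Your proof is correct and uses essentially the same approach as the paper's: project $f_n-\E[f_n]$ onto $\Psi_k^{(n)}$ and exploit Booleanness plus nondegeneracy to show this projection cannot be concentrated near zero. The only difference is cosmetic---you construct the witness for $(B)$ directly via a Markov bound on the high-frequency tail $h_n$, while the paper argues by contradiction, assuming $(\neg B)$ to force $\|f_n-P_{k,0}f_n\|^2$ to be large.
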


\begin{proof}
Suppose that \( (f_n)_{n \geq 1} \) is a nondegenerate sequence of Boolean functions which is noise stable with respect to \( (X^{(n)})_{n \geq 1} \). 
As \( (f_n)_{n \geq 1 } \) is nondegenerate and Boolean, there is \( \delta > 0 \) such that
\[
\delta < \Var (f_n) = \E[f_n^2]- \E [f_n]^2 = \E[f_n] - \E[f_n]^2 = \E[f_n] \cdot ( 1 - \E[f_n] ) 
\]
for all \( n \geq 1 \). Rewriting this, we obtain
\[
\left| \E[f_n] - \frac{1}{2}\right| < \sqrt{\frac{1}{4} - \delta}
\]
which in particular implies that
\begin{equation}
\min\left\{ \left| \E[f_n] -0\right|, \left| \E[f_n] - 1\right| \right\} > \frac{1}{2} - \sqrt{\frac{1}{4}-\delta}
\label{equation: lemma 3 distance to Boolean}
\end{equation}
for all \( n \geq 1 \).

As \( (f_n)_{n \geq 1 } \) is noise stable with respect to \( (X^{(n)})_{n \geq 1} \), by Proposition~\ref{proposition: noise stability fourier},  for all \( \alpha >0 \) there is \( k > 0 \) such that
\begin{equation}
\sum_{i \colon \lambda_i > k \lambda_1} \hat f_n(i)^2 < \alpha
\label{equation: lemma 3 noise stability}
\end{equation}
for all \( n \geq 1 \). Fix such a \( k \). 
For a function \( f \colon S^{(n)} \to \mathbb{R} \), let \( P_kf \coloneqq  \sum_{i \geq 1\colon \lambda_i < k \lambda_1} \hat f(i) \psi_i^{(n)} \) and let  \( P_{k,0}f \coloneqq  \sum_{i \geq 0\colon \lambda_i < k \lambda_1} \hat f(i) \psi_i^{(n)} \).
If \( (\neg B) \) holds, then for all \( \varepsilon > 0 \) there is arbitrarily large \( n \geq 1 \) such that for all \( \psi^{(n)} \in \Psi_k^{(n)} \) with \( \langle \psi^{(n)}, \psi^{(n)} \rangle = 1 \),
\[
P(\psi^{(n)} (w)^2<\varepsilon) \geq 1 - \varepsilon.
\]
In particular
\[
P(P_kf_n(w)^2 < \varepsilon ) \geq P(P_kf_n(w)^2 < \varepsilon \langle P_kf_n, P_kf_n \rangle) \geq 1-\varepsilon.
\]
Using~\eqref{equation: lemma 3 distance to Boolean} we now obtain
\begin{equation*}
\begin{split}
\sum_{i \colon \lambda_i > k \lambda_1} \hat f_n(i)^2 
&= 
\langle f_n-P_{k,0}f_n, f_n-P_{k,0}f_n \rangle 
\\&= 
\E[( f_n-P_{k,0}f_n )^2] 
\\&\geq 
P(P_kf_n(w)^2<\varepsilon) \cdot 
\E[( f_n-P_{k,0}f_n )^2 \mid P_kf_n(w)^2<\varepsilon]
\\&\geq
(1-\varepsilon) \cdot \left( \frac{1}{2} - \sqrt{\frac{1}{4}-\delta} - \sqrt{\varepsilon} \right).
\end{split}
\end{equation*}
If \( \alpha \) and \( \varepsilon \) are both chosen small enough, this contradicts~\eqref{equation: lemma 3 noise stability}. This finishes the proof.
\end{proof}

\begin{proof}[Proof of Proposition~\ref{proposition: noise stability existence equivalence}]
Note first that it is trivial that \( (C) \Rightarrow (A) \). By Lemma~\ref{lemma: third lemma}, \( (A) \) implies \( (B) \).  By Lemma~\ref{lemma: first lemma}, \( \neg (C) \Rightarrow \neg (B) \), in turn implying that \( (B) \Rightarrow (C)   \). Combining these results, we obtain \( (A) \Rightarrow (B) \Rightarrow C \Rightarrow A\), implying that \( (A) \Leftrightarrow (B)\) which is the desired conclusion.
\end{proof}

We now proceed to the proof of Proposition~\ref{proposition: vertex transitive functions}.

\begin{proof}[Proof of Proposition~\ref{proposition: vertex transitive functions}]
We will prove something stronger than what is required to be able to conclude the proposition, namely that there is \( \varepsilon > 0 \) and  a sequence \( (\psi^{(n)} )_{n \geq 1} \), where \( \psi^{(n)} \in \Psi_1^{(n)} \) and \( \langle \psi^{(n)}, \psi^{(n)} \rangle = 1 \) for each \( n \), such that 
\[
P_w( \psi^{(n)}(w)^2 < \varepsilon ) < 1 - \varepsilon
\]
for all large enough \( n \).

Assume for contradiction that this is not true, i.e. assume that for any \( \varepsilon > 0 \) there is a subsequence \( (n_k^{(\varepsilon)})_{k \geq 1} \), such that
\begin{equation}
P_w(\psi^{(n_k^{(\varepsilon)})}(w)^2 < \varepsilon) \geq 1 - \varepsilon
\label{equation: contradiction assumption}
\end{equation}
for all  sequences  \( ( \psi^{(n_k^{(\varepsilon)})})_{k \geq 1} \) with \( \psi^{(n_k^{(\varepsilon)})} \in \Psi_1^{(n_k^{(\varepsilon)})} \). We can without loss of generality assume that
\[
\{ n_k^{(\varepsilon)}\}_{k\geq 1}  \subseteq \{ n_k^{(\varepsilon')}\}_{k\geq 1} 
\]
whenever \( \varepsilon < \varepsilon' \). To make the notation less cumbersome, we will write \( n_k \) instead of \( n_k^{(\varepsilon)} \) whenever the dependency on \( \varepsilon \) is clear.

Set \( m^{(n_k)} \coloneqq \dim \Psi_1^{(n_k)} \) and let \( \chi_1^{(n_k)} \), \ldots, \( \chi_{m^{(n_k)}}^{(n_k)} \) be an orthonormal basis for \( \Psi_1^{(n_k)} \).
Define
\[
\phi_1^{(n_k)}[\theta_1] = \chi^{(n_k)}_1 \sin \theta_1 + \chi^{(n_k)}_2 \cos \theta_1
\]
and inductively for \( j = 2, 3, \ldots, m^{(n_k)} \),
\begin{equation*}
\phi_j^{(n_k)}[\theta_1, \theta_2, \ldots, \theta_{j}]
= 
\phi_{j-1}^{(n_k)}[\theta_1,\ldots, \theta_{j}]\sin \theta_{j} + \chi_{j+1}^{(n_k)} \cos \theta_{j}.
\end{equation*}
%where we set \( \theta_0 = 0 \) and \( \phi^{(n)} = \phi_{m^{(n)}} \). 
Then all vectors  \( \psi \in  \Psi_1^{(n_k)} \) of length one can be written as 
\[
 \psi  [\theta_1, \ldots, \theta_{m^{(n_k)}-1}]  \coloneqq \phi_{m^{(n_k)}}^{(n_k)} [\theta_1, \ldots, \theta_{m^{(n_k)}-1}] 
 \]
 for some tuple \( (\theta_1, \ldots, \theta_{m^{(n_k)}-1}) \in [0,\pi] \times [0,\pi/2]^{m^{(n_k)}-2} \). 
Note that \( \psi \) is the parametric description of a \( m^{(n_k)} \)-dimensional sphere in \( \mathbb{R}^{|S^{(n_k)}|} \). Let \( \mu^{(n_k)} \) be its surface measure.

Define
\[
\Theta_w^{(n_k^{(\varepsilon)})}(\varepsilon) = \left\{ (\theta_1, \ldots, \theta_{m^{(n_k)}-1})\in [0,\pi] \times [0,\pi/2]^{m^{(n_k)}-2} \colon \phi_{m^{(n_k)}}^{(n_k)}[\theta_1, \ldots \theta_{m^{(n_k)}-1}](w)^2< \varepsilon  \right\}
\]
\eqref{equation: contradiction assumption} is then equivalent to that
\[
P(\{ w \colon (\theta_1, \ldots, \theta_{m^{(n_k)}-1} ) \in \Theta_w^{(n_k^{(\varepsilon)})} \}) \geq 1 - \varepsilon
\]
for each tuple \( (\theta_1, \ldots, \theta_{m^{(n_k)}-1} ) \).
By~\eqref{equation: contradiction assumption}, each \( \psi \in \Psi_1^{(n_k)} \cap S_{|S^{(n_k)}|-1}\) corresponds to a tuple \( (\theta_1, \ldots, \theta_{m^{(n_k)}-1} )\) which lies in \( \Theta_w^{(n_{k})}(\varepsilon) \) for at least \( (1-\varepsilon)|S^{(n_k)}| \) different \( w\in S^{(n_k)} \). 
In terms of probabilities, this means that 
\[
P(\{ w \colon (\theta_1, \ldots, \theta_{m^{(n_k)}-1} ) \in \Theta_w^{(n_k^{(\varepsilon)})} \}) \geq 1 - \varepsilon
\]
for each tuple \( (\theta_1, \ldots, \theta_{m^{(n_k)}-1} ) \). As this holds for all such tuples, we obtain that
\[
P \times \mu^{(n_{k})} (\{ w \colon (\theta_1, \ldots, \theta_{m^{(n_k)}-1} ) \in \Theta_w^{(n_k^{(\varepsilon)})} \}) \geq 1 - \varepsilon
\]
As \( P \) a finite probability measure with is uniform on \(S^{(n_m)} \), there must be at least one \( w \in S^{(n_k)} \) such that \( \mu^{(n_{k})} (\Theta_w^{(n_{k})}(\varepsilon)) \geq 1-\varepsilon  \). 
Fix such an element  \(w \in S^{(n_k)} \). 

As \( \mu^{(n_k)} \) is a surface measure, it is invariant of the choice of basis. In particular, we can assume that the basis is chosen in such a way that 
\[
 \chi^{(n_k)}_1(w) = \ldots =\chi^{(n_k)}_{m^{(nk)}-1}(w) = 0.
\]
Then \( \mu( \Theta_w^{(n_{k})}(\varepsilon) )  \) 
can be rewritten as
\begin{equation*}
\begin{split}
\mu^{(n_k)} (\Theta_w^{(n_{k})}(\varepsilon)) 
&= 
\mu^{(n_k)} \left( \left\{ (\theta_1, \ldots, \theta_{m^{(n_k)}-1}) \colon \phi_{m^{(n_k)}}^{(n_k)} [\theta_1, \ldots, \theta_{m^{(n_k)}-1}](i)^2< \varepsilon \right\}\right)
\\&=
\mu^{(n_k)} \left( \left\{ (\theta_1, \ldots, \theta_{m^{(n_k)}-1}) \colon \chi^{(n_k)}_{m^{(n_k)}}(w)^2\cos^2(\theta_{m^{(n_k)}-1})<\varepsilon  \right\}\right)
\\&=
\mu^{(n_k)} \left( \left\{ (\theta_1, \ldots, \theta_{m^{(n_k)}-1}) \colon \cos^2(\theta_{m^{(n_k)}-1})< \frac{\varepsilon}{\chi^{(n_k)}_{m^{(n_k)}}(w)^2}  \right\}\right)
\\&=
\mu^{(n_k)} \left( \left\{ (\theta_1, \ldots, \theta_{m^{(n_k)}-1}) \colon \cos^2(\theta_{m^{(n_k)}-1})< \frac{\varepsilon}{M_w^{(n_k)}}  \right\}\right)
%\mu^{(n_k)} \left( \left\{ (\theta_1, \ldots, \theta_{m^{(n_k)}-1}) \colon \cos^2(\theta_{m^{(n_k)}-1})< \frac{\varepsilon}{\dim \Psi_1^{(n_k)}}  %\right\}\right)
%\\&=
%\mu^{(n_k)} \left( \left\{ (\theta_1, \ldots, \theta_{m^{(n_k)}-1}) \colon \cos^2(\theta_{m^{(n_k)}-1})< \frac{\varepsilon}{m^{(n_k)}}  \right\}\right).
\end{split}
\end{equation*}
where \( M_w^{(n_k)} = \sup_{\theta_1,\ldots, \theta_{m^{(n_k)} -1}} \phi_{m^{(n_k)}}^{(n_k)} [\theta_1, \ldots, \theta_{m^{(n_k)}-1}]^2 \).

%where the second to last step follows from Lemma~\ref{lemma: maximal value} as \( \chi_{m^{(n_k)}}(w) = \sup_{\psi \in \Psi_1^{(n_k)}, \langle \psi, \psi \rangle = 1} \psi(w) \), given this choice of basis.

Since the surface element of a \( m^{(n_k)} \)-dimensional sphere is given by 
\[
\sin(\theta_2) \sin^2(\theta_3) \cdots \sin^{m^{(n_k)}-2}(\theta_{m^{(n_k)}-1}) d\theta_1 d\theta_2\ldots d\theta_{m^{(n_k)}-1}
\]
we have that
\begin{equation*}
\mu(\Theta_w^{(n_{k})}(\varepsilon))
= \frac{\int_{\arccos \sqrt{\varepsilon/M_w^{(n_k)}}}^{\pi/2} \sin^{m^{(n_k)}-2}(\theta_{m^{(n_k)}-1}) d\theta_{m^{(n_k)}-1}}{\int_{0}^{\pi/2} \sin^{m^{(n_k)}-2}(\theta_{m^{(n_k)}-1}) d\theta_{m^{(n_k)}-1}}.
\end{equation*}
To be able to give bounds on this integral, note first that
\[
 \arccos \sqrt\frac{\varepsilon}{M_w^{(n_k)}} = \frac{\pi}{2} - \sqrt\frac{\varepsilon}{M_w^{(n_k)}} + O\left(\left(\sqrt\frac{\varepsilon}{M_w^{(n_k)}}\right)^3\right).
\]
This implies that
\begin{equation*}
\begin{split}
\mu^{(n_k)}(\Theta_w^{(n_{k'})}(\varepsilon)) 
=
 \frac{\int_{0}^{\sqrt{\varepsilon/M_w^{(n_k)}}+ O((\varepsilon/M_w^{(n_k)})^{3/2})} \cos^{m^{(n_k)}-2}(\theta) \,d\theta}{\int_{0}^{\pi/2} \cos^{m^{(n_k)}-2}(\theta) \,d\theta}.
\end{split}
\end{equation*}

For any \(a \in (0,1) \),
\begin{equation*}
\begin{split}
 \cos^{m^{(n_k)}-2}\theta = a 
\Leftrightarrow \theta &= \arccos a^{1/(m^{(n_k)}-2)}
\\&=
\sqrt{2(1-a^{1/(m^{(n_k)}-2)})} + O((1-a^{1/(m-2)})^{3/2})
\\&=
 \sqrt\frac{-2 \log a}{m^{(n_k)}-2}+  O(\frac{\log a}{m^{(n_k)}-2}) +  O((1-a^{1/(m-2)})^{3/2}) 
\\&=
 \sqrt\frac{-2 \log a}{m^{(n_k)}-2}+  O(\frac{\log a}{m^{(n_k)}-2}) .
\end{split}
\end{equation*}
Using Lemma~\ref{lemma: upper bound on measure}, we obtain
\begin{equation*}
\begin{split}
\frac{\int_{0}^{\frac{1}{2} \cdot \sqrt\frac{-2 \log a}{m^{(n_k)}-2}+  O(\frac{\log a}{m^{(n_k)}-2}) } \cos^{m^{(n_k)}-2}(\theta) \,d\theta}{\int_{0}^{\pi/2} \cos^{m^{(n_k)}-2}(\theta) \,d\theta}
\leq
\frac{1}{1+a}.
\end{split}
\end{equation*}
Since \( (X^{(n)} )_{n \geq 1} \) is transitive, Lemma~\ref{lemma: maximal value} implies that \( M_w^{(n_k)} = \dim \Psi^{(n_k)} = m^{(n_k)} \). In particular, this implies that if we choose \( a = \exp(-4\varepsilon)  \), then
\begin{equation*}
\begin{split}
\mu^{(n_k)}(\Theta_w^{(n_{k'})}(\varepsilon)) 
&=
 \frac{\int_{0}^{\sqrt{\varepsilon/M^{(n_k)}}+ O((\varepsilon/M^{(n_k)})^{3/2})} \cos^{m^{(n_k)}-2}(\theta) \,d\theta}{\int_{0}^{\pi/2} \cos^{m^{(n_k)}-2}(\theta) \,d\theta}
\\&= 
 \frac{\int_{0}^{\sqrt{\varepsilon/m^{(n_k)}}+ O((\varepsilon/m^{(n_k)})^{3/2})} \cos^{m^{(n_k)}-2}(\theta) \,d\theta}{\int_{0}^{\pi/2} \cos^{m^{(n_k)}-2}(\theta) \,d\theta}
\\&\leq 
\frac{\int_{0}^{\frac{1}{2} \cdot \sqrt\frac{-2 \log a}{m^{(n_k)}-2}+  O(\frac{\log a}{m^{(n_k)}-2}) } \cos^{m^{(n_k)}-2}(\theta) \,d\theta}{\int_{0}^{\pi/2} \cos^{m^{(n_k)}-2}(\theta) \,d\theta}
\\&\leq
\frac{1}{1+\exp(-4\varepsilon)}
\end{split}
\end{equation*}
whenever \( \varepsilon\) is small enough. In particular, if \( \varepsilon \) is small enough, then we cannot have that 
\[
\mu(\Theta_i^{(n_{k'})} (\varepsilon)) \geq 1-\varepsilon
\]
for any \( k \geq 1 \). This completes the proof.

\end{proof}

\section{Noise stability and the bottleneck ratio}

Although Proposition~\ref{proposition: vertex transitive functions} tell us that for a relatively large family of sequences of Markov chains \( (X_n)_{n \geq 1} \), there  is at least one nondegenerate sequence of Boolean functions which is noise stable with respect to \( (X_n)_{n \geq 1 } \), it does not tell us how to find such a sequence. Lemma~\ref{lemma: technical part of first lemma} provides us with means to do so, but it contains many parameters and does not connect to other well known definitions for graphs. We will end this paper by giving a result for reversible and irreducible continuous time Markov chains whose bottleneck ratio and spectral gap is of the same order, which says that in this case, any sequence of sets whose bottleneck ratios approximate the spectral gap well, will be noise stable. When in addition to being reversible and irreducible, the Markov chain is transitive, we show that there is a nondegenerate sequence of sets which minimize the bottleneck ratio. The most interesting thing with this result is that it provides a partial explanation of how noise stable functions can arise.

Recall that for a  random walk on a connected graph \( G \) with stationary distribution \( \pi \), the bottleneck ratio \( \Phi_* \) is defined by 
\begin{equation}
\Phi_* =\Phi_*(G) \coloneqq \inf_{A \subset V(G) \colon \pi(A)\leq 1/2} \Phi(A)
\label{equation: bottleneck graph}
\end{equation}
where \( \Phi(A) = {|E(A, A^c )|}/{\vol(A)} \). For a general continuous time Markov chain \( X \) with generator \( Q = (q_{ij}) \) and stationary distribution \( \pi\), we let \( \Phi(A) \coloneqq ({\sum_{i \in A,\, j \in A^c} \pi(i) q_{ij}})/{\pi(A)} \) and  define the bottleneck ratio as
\begin{equation}
\Phi_* = \Phi_*(X) \coloneqq \inf_{A \subset S^{(n)} \colon \pi(A)\leq 1/2} \Phi(A).
\label{equation: bottleneck Markov chain}
\end{equation}
Note that when the rate \( r = q_{ii} \) of a random walk equals one, 
\begin{equation*}
\frac{\sum_{i \in A,\, j \in A^c} \pi(i) q_{ij}}{\pi(A)} 
=
\frac{\sum_{i \in A,\, j \in A^c} \frac{\deg(i)}{\vol(G)} \cdot \frac{1}{\deg(i)}}{\sum_{i \in A} \frac{\deg(i)}{\vol(G)}} 
=
\frac{\sum_{i \in A,\, j \in A^c} 1}{\sum_{i \in A} \deg(i)}
=
\frac{|E(A,A^c)|}{\vol(A)}
\end{equation*}
i.e. in this case the two definitions coincide. 

The main reason to expect that noise stable sequences of Boolean functions should exist when \( \Phi_*^{(n)} \asymp \lambda_1^{(n)} \) is that heuristically, for a set \(A \subseteq S^{(n)}\), \( \Phi_*(A) \) estimates how large a proportion of the total mass of \( A \)  that will move to \( A^c \) during one unit of time. If \( \Phi_*^{(n)} \asymp \lambda_1^{(n)} \), then only about a proportion \( \varepsilon \) of the total mass in \( A \) will move to \( A^c \) at least once between time zero and time \( \varepsilon t_{rel}^{(n)} \). If \( A_n \) minimizes the bottleneck ratio of \( X^{(n)} \), we thus expect that the sequence \( (\mathbf{1}_{w \in A_n})_{n \geq 1} \) is noise stable, and it  only remains to show that we can find such a sequence which is in addition nondegenerate, which is why we want each Markov chain in the sequence \( (X^{(n)} )_{n\geq 1 } \) to be transitive. In Examples~\ref{example: small set bottleneck 2} and~\ref{example: small set bottleneck 3}, we show that even for random walks on connected graphs, we cannot drop the transitivity assumption. However, it is not clear whether this assumption could be replaced with something weaker.

In general, it is known that for any Markov chain, the bottleneck ratio and the spectral gap satisfies \( \Phi_*^2 \leq 2\lambda_1 \leq 4\Phi_* \), where the lower bound is attained for a unit rate random walk on \( \mathbb{Z}_n \) and the upper bound is attained for a unit rate random walk on the Hamming cube. We thus know that there are sequences of graphs with \( \Phi_*^{(n)} \asymp \lambda_1^{(n)} \), implying that the result is not void.

We are now ready to state our result.

\begin{proposition}
Let \( (X^{(n)})_{n \geq 1} \) be a sequence of transitive, reversible and irreducible continuous time Markov chains, with spectral gaps \( (\lambda_1^{(n)})_{n \geq 1 } \) and bottleneck ratios \( (\Phi_*^{(n)} )_{n \geq 1 } \).
If \( \lambda_1^{(n)} \asymp \Phi_*^{(n)} \), there exist at least one nondegenerate noise stable sequence of Boolean functions on \( S^{(n)} \).
\label{proposition: bottleneck and noise stability}
\end{proposition}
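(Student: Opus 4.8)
The plan is to produce an explicit nondegenerate noise stable sequence by taking indicator functions of sets that nearly minimize the bottleneck ratio, and then to verify noise stability via the machinery of Lemma~\ref{lemma: technical part of first lemma}. First I would use transitivity together with the hypothesis $\lambda_1^{(n)} \asymp \Phi_*^{(n)}$ to find, for each $n$, a set $A_n \subseteq S^{(n)}$ with $\pi_n(A_n) \leq 1/2$ and $\Phi(A_n) \leq C \lambda_1^{(n)}$ for a constant $C$ independent of $n$. The key structural point is that transitivity forces such near-minimizing sets to carry a nonvanishing fraction of the stationary mass: if $\pi_n(A_n) \to 0$, then one could tile $S^{(n)}$ by a growing number of disjoint translates $\varphi(A_n)$, $\varphi \in \Aut(X^{(n)})$, each with the same small conductance, and a standard averaging/test-function argument (plugging a suitable combination of the indicators $\mathbf{1}_{\varphi(A_n)}$ into the Rayleigh quotient~\eqref{equation: rayleigh quotient}) would then contradict $\Phi_*^{(n)} \asymp \lambda_1^{(n)}$ — essentially because many well-separated bottlenecks of conductance $\asymp \lambda_1^{(n)}$ force the spectral gap to be strictly smaller order than $\Phi_*^{(n)}$. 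Hence there is $\delta > 0$ with $\delta \leq \pi_n(A_n) \leq 1/2$ for all $n$, so $(\mathbf{1}_{A_n})_{n \geq 1}$ is nondegenerate.

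Next I would prove noise stability of $(\mathbf{1}_{A_n})_{n \geq 1}$ directly, bypassing the eigenvector language. Write $f_n = \mathbf{1}_{A_n}$. By the formula derived in the proof of Proposition~\ref{proposition: noise stability fourier},
\[
P\bigl(f_n(X_0^{(n)}) \neq f_n(X_{\alpha t_{rel}^{(n)}}^{(n)})\bigr) = 2\Bigl(\E[f_n] - \E\bigl[f_n(X_0^{(n)}) f_n(X_{\alpha t_{rel}^{(n)}}^{(n)})\bigr]\Bigr),
\]
and the right-hand side equals twice the probability (in stationarity) that the chain starts in $A_n$ and is in $A_n^c$ at time $\alpha t_{rel}^{(n)}$. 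The heuristic recorded before the statement is that $\Phi(A_n)$ controls the rate at which mass leaks out of $A_n$: one can bound
\[
\E\bigl[\mathbf{1}_{A_n}(X_0^{(n)})\,\mathbf{1}_{A_n^c}(X_{t}^{(n)})\bigr] \leq \int_0^{t} \sum_{i \in A_n,\, j \in A_n^c} \pi_n(i)\, q_{ij}^{(n)}\, ds = t \cdot \pi_n(A_n)\, \Phi(A_n),
\]
using reversibility and the fact that the instantaneous flux out of $A_n$ in stationarity is at most $\sum_{i \in A_n, j \in A_n^c}\pi_n(i)q_{ij}^{(n)}$ at every time (a union bound over the first jump across the cut, made rigorous by integrating the generator). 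Taking $t = \alpha t_{rel}^{(n)} = \alpha/\lambda_1^{(n)}$ and using $\Phi(A_n) \leq C\lambda_1^{(n)}$ gives
\[
P\bigl(f_n(X_0^{(n)}) \neq f_n(X_{\alpha t_{rel}^{(n)}}^{(n)})\bigr) \leq 2 \cdot \frac{\alpha}{\lambda_1^{(n)}} \cdot \Phi(A_n) \leq 2C\alpha,
\]
uniformly in $n$; letting $\alpha \to 0$ yields noise stability.

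The main obstacle I expect is the first half: turning "$\pi_n(A_n) \to 0$ contradicts $\Phi_*^{(n)} \asymp \lambda_1^{(n)}$'' into a clean argument. The delicate point is producing enough pairwise disjoint (or at least boundedly-overlapping) translates of $A_n$ and controlling the cross terms in the Rayleigh quotient for the test function $\sum_j \xi_j \mathbf{1}_{\varphi_j(A_n)}$ with the $\xi_j$ chosen orthogonal to the constants — this is exactly where transitivity is essential and where an unconditional argument would fail (cf.\ Examples~\ref{example: small set bottleneck 2} and~\ref{example: small set bottleneck 3}). It is plausible that the paper instead routes this through Proposition~\ref{proposition: noise stability existence equivalence} and Lemma~\ref{lemma: technical part of first lemma}, taking $\psi^{(n)}$ to be (a normalization of) the projection of $\mathbf{1}_{A_n}$ onto $\Psi_k^{(n)}$ and verifying conditions (i)–(iii) there; but the conductance-leakage estimate above is the conceptually cleanest path, provided the nondegeneracy step can be pushed through. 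A secondary, more routine concern is handling the rate normalization $r = q_{ii}$ (the excerpt's $\Phi$ is stated for general rates, so the leakage bound must be written in terms of $\sum_{i\in A,j\in A^c}\pi_n(i)q_{ij}^{(n)} = \pi_n(A_n)\Phi(A_n)$ rather than edge counts).
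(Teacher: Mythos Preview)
Your second half---the leakage bound
\[
P\bigl(f_n(X_0^{(n)}) \neq f_n(X_{\alpha t_{rel}^{(n)}}^{(n)})\bigr) \leq 2\,\alpha\, t_{rel}^{(n)}\,\pi_n(A_n)\,\Phi(A_n)
\]
via a union bound over the first crossing of the cut---is exactly the content of Lemma~\ref{lemma: asymp and noise stable}, so that part is fine and coincides with the paper's argument.

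The gap is in the nondegeneracy step. Your proposed mechanism is: if $\pi_n(A_n)\to 0$, pack many disjoint translates $\varphi_j(A_n)$ and plug $f=\sum_j \xi_j\mathbf{1}_{\varphi_j(A_n)}$ with $\E[f]=0$ into the Rayleigh quotient to force $\lambda_1^{(n)}=o(\Phi_*^{(n)})$. But compute what this test function actually gives: for disjoint translates, $\langle f,f\rangle=\pi_n(A_n)\sum_j\xi_j^2$ and the diagonal part of $\langle -Q_nf,f\rangle$ is $\pi_n(A_n)\Phi(A_n)\sum_j\xi_j^2$, while the off-diagonal terms $-\xi_i\xi_j\sum_{a\in\varphi_i(A_n),\,b\in\varphi_j(A_n)}\pi_n(a)q_{ab}^{(n)}$ can have either sign depending on the sign pattern of the $\xi_j$. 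Even in the favourable case you only get $\lambda_1^{(n)}\leq \Phi(A_n)\asymp\Phi_*^{(n)}$, which is just Cheeger and is perfectly consistent with $\lambda_1^{(n)}\asymp\Phi_*^{(n)}$. Nothing in this construction drives the ratio to zero; having many bottlenecks of conductance $\asymp\lambda_1^{(n)}$ produces many eigenvalues near $\lambda_1^{(n)}$, not an eigenvalue far below it. So as stated the argument does not close, and you correctly flagged this as the obstacle.

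The paper sidesteps this entirely with a purely combinatorial fact (Lemma~\ref{lemma: nondegenericity}) that needs neither the hypothesis $\Phi_*^{(n)}\asymp\lambda_1^{(n)}$ nor any spectral input: for \emph{any} transitive, reversible, irreducible chain the bottleneck ratio is achieved by some set $A$ with $\pi(A)\in(1/4,1/2]$. The proof uses a single translate, not a tiling. Suppose every minimizer $A$ has $\pi(A)\leq 1/4$; pick one of maximal measure and $\varphi\in\Aut(X)$ with $\varphi(A)\neq A$. Then $\pi(A\cup\varphi(A))\leq 1/2$, so by maximality $\Phi(A\cup\varphi(A))>\Phi(A)$. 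A direct inclusion--exclusion computation of the boundary flux gives
\[
\Phi(A\cup\varphi(A))\leq\frac{2\pi(A)\Phi(A)-\pi(A\cap\varphi(A))\,\Phi(A\cap\varphi(A))}{2\pi(A)-\pi(A\cap\varphi(A))},
\]
and combining the two inequalities forces $\Phi(A\cap\varphi(A))<\Phi(A)=\Phi_*$, a contradiction. This is both stronger (it yields the explicit lower bound $1/4$) and more elementary than what you attempted, and it does not go through Proposition~\ref{proposition: noise stability existence equivalence} or Lemma~\ref{lemma: technical part of first lemma} at all.
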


Proposition~\ref{proposition: bottleneck and noise stability} will be proven using two lemmas, which we now state and prove. In these lemmas, as well as in the rest of the notes, we will for any Boolean function  \( f\colon V(G) \to \{ 0,1 \} \) let \( A_f \) denote the set \(  \{ w \in V(G)\colon f(w) = 1 \} \). The first of these lemmas will be proven for general continuous time Markov chains, but due to issues concerning nondegeneracy which will be discussed later, the second lemma will only be proved for transitive Markov chains.

\begin{lemma}
Let \( (X^{(n)})_{n \geq 1} \) be a sequence of reversible and irreducible continuous time Markov chains with spectral gaps \( (\lambda_1^{(n)})_{n \geq 1 } \). Further,  let \( ( f_n )_{n \geq 1}\), \( f_n \colon S^{(n)} \to \{ 0,1 \} \),  be a sequence of Boolean functions with  \(    \pi (A_{f_n} ) \in (0,1/2]\). Then
\[
\lim_{\alpha \to 0 } \lim_{n \to \infty} 
P(f_n(X^{(n)}_0) \not = f_n(X^{(n)}_{t_{rel}}))
\leq
\lim_{\alpha \to 0 } \lim_{n \to \infty} 
\alpha  \, t^{(n)}_{rel}  \cdot \pi(A_{f_n}) \cdot \Phi(A_{f_n}).
\]
In particular, if \( \Phi (A_{f_n}) \asymp \lambda_1^{(n)}\), then \( ( f_n )_{n \geq 1} \) is noise stable with respect to \( (X^{(n)})_{n \geq 1} \).
\label{lemma: asymp and noise stable}
\end{lemma}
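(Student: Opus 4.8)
The plan is to control the probability $P(f_n(X_0^{(n)}) \neq f_n(X_{\alpha t_{rel}^{(n)}}^{(n)}))$ directly by bounding the expected number of times the walk crosses the boundary of $A_{f_n}$ during the time interval $[0, \alpha t_{rel}^{(n)}]$. First I would observe that $f_n$ is Boolean, so $f_n(X_0^{(n)}) \neq f_n(X_s^{(n)})$ forces the trajectory of $X^{(n)}$ to have used at least one edge $(i,j)$ with $i \in A_{f_n}$, $j \in A_{f_n}^c$ (or vice versa) at some time in $[0,s]$. Hence, by a union bound over such crossings and the fact that crossings of a fixed directed edge $(i,j)$ occur at rate $q_{ij}$ whenever the walk is at $i$ and the walk sits at $i$ a $\pi_n(i)$-fraction of the time in stationarity, the expected number of boundary crossings in time $s$ is exactly $s \sum_{i \in A_{f_n},\, j \in A_{f_n}^c} \pi_n(i) q_{ij} = s \,\pi_n(A_{f_n}) \,\Phi(A_{f_n})$. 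Taking $s = \alpha t_{rel}^{(n)}$ and applying Markov's inequality gives
\[
P\bigl(f_n(X_0^{(n)}) \neq f_n(X_{\alpha t_{rel}^{(n)}}^{(n)})\bigr) \leq \alpha\, t_{rel}^{(n)} \cdot \pi_n(A_{f_n}) \cdot \Phi(A_{f_n}),
\]
which after taking $\lim_{n\to\infty}$ and then $\lim_{\alpha\to 0}$ yields the displayed inequality. The rigorous way to get the crossing-count identity is to note that for the stationary chain, the process $N_s$ counting directed boundary crossings up to time $s$ is a counting process whose compensator is $\int_0^s \mathbf{1}_{X_u^{(n)} \in A_{f_n}} \sum_{j \in A_{f_n}^c} q_{X_u^{(n)} j}\, du$ (restricting to crossings out of $A_{f_n}$ suffices, since $f_n$ changing value from time $0$ requires at least one exit or one entry, and by stationarity/reversibility the expected number of exits equals the expected number of entries), so $\E[N_s] = s \sum_{i \in A_{f_n}} \pi_n(i) \sum_{j \in A_{f_n}^c} q_{ij}$, and $\{f_n(X_0^{(n)}) \neq f_n(X_s^{(n)})\} \subseteq \{N_s^{\text{tot}} \geq 1\}$ with $\E[N_s^{\text{tot}}] = 2\E[N_s]$; one absorbs the factor $2$ into the $\alpha \to 0$ limit or simply keeps it, since it does not affect the conclusion once $\Phi(A_{f_n}) \asymp \lambda_1^{(n)}$.

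For the final assertion, suppose $\Phi(A_{f_n}) \leq C \lambda_1^{(n)}$ for all $n$. Since $t_{rel}^{(n)} = 1/\lambda_1^{(n)}$ and $\pi_n(A_{f_n}) \leq 1/2$, the bound becomes $P(f_n(X_0^{(n)}) \neq f_n(X_{\alpha t_{rel}^{(n)}}^{(n)})) \leq \alpha \cdot \tfrac{1}{\lambda_1^{(n)}} \cdot \tfrac{1}{2} \cdot C\lambda_1^{(n)} = \tfrac{C}{2}\alpha$ (with an extra factor $2$ if one keeps the total-crossing count), uniformly in $n$. Therefore $\lim_{\alpha \to 0}\sup_n P(f_n(X_0^{(n)}) \neq f_n(X_{\alpha t_{rel}^{(n)}}^{(n)})) = 0$, which is precisely noise stability of $(f_n)_{n\geq 1}$.

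The main obstacle I anticipate is making the compensator/crossing-count argument fully rigorous for a general continuous-time Markov chain: one must justify that crossings happen at the claimed rate, that the counting process has finite expectation on $[0, \alpha t_{rel}^{(n)}]$ (which follows from $\sum_i \pi_n(i)(-q_{ii}) < \infty$ on a finite state space, so this is harmless), and that the event of $f_n$ changing value is indeed contained in the event of at least one crossing (immediate for càdlàg jump trajectories, since $f_n$ is constant on $A_{f_n}$ and on $A_{f_n}^c$, and any change in $f_n$-value corresponds to a jump across the cut). All of these are standard for finite-state chains, so the argument is essentially bookkeeping; the only genuinely substantive point is correctly identifying the boundary-crossing rate as $\sum_{i \in A,\, j \in A^c} \pi_n(i) q_{ij}$, which is exactly the numerator in the definition of $\Phi(A)$, making the link to the bottleneck ratio transparent.
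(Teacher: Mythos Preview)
Your argument is correct and follows essentially the same route as the paper: both bound $P(f_n(X_0^{(n)})\neq f_n(X_{\alpha t_{rel}^{(n)}}^{(n)}))$ by the expected number of boundary crossings of $A_{f_n}$ in time $\alpha t_{rel}^{(n)}$, which equals $2\alpha t_{rel}^{(n)}\pi(A_{f_n})\Phi(A_{f_n})$ by stationarity and reversibility. The only cosmetic difference is that the paper obtains this crossing rate via an explicit time-discretization (partitioning $[0,\alpha t_{rel}^{(n)}]$ into steps of length $h_n$, applying a union bound, and letting $h_n\to 0$), whereas you invoke the compensator identity for the crossing-count process directly; both handle the harmless factor of $2$ in the same way.
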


\begin{proof}
Fix \( n \geq 1 \) and let \( \alpha > 0 \) be arbitrarily chosen. By the definition of the generator \( Q_n \), for any \( h_n > 0\),
\begin{equation}
 P(X^{(n)}_{h_n} = j \mid X^{(n)}_0 =i) = h_n \, q_{ij}^{(n)} + o(h_n).
\label{eq: hn def}
\end{equation}
We will, to simplify notations, assume that \( h_n \) is choosen such that \( \alpha t^{(n)}_{rel}/h_n \) is an integer. Then
\begin{align}
P(f_n(X^{(n)}_0)=1,\,  f_n(X^{(n)}_{\alpha t^{(n)}_{rel}})=0) 
&\leq 
\sum_{k = 1}^{\alpha t^{(n)}_{rel}/h_n} P(f_n(X^{(n)}_{(k-1)h_n})=1,\, f_n( X^{(n)}_{kh_n})=0)\nonumber
\\[1ex]
 &= 
\frac{\alpha  \, t^{(n)}_{rel}}{h_n} \cdot  P(f_n(X^{(n)}_{0})=1,\,  f_n(X^{(n)}_{h_n})=0) \label{equation: only the second line}
\end{align}
where the last equality follows by stationarity. By definition, the right hand side of \eqref{equation: only the second line} equals
\begin{equation}
\alpha  \, t^{(n)}_{rel} \cdot  \sum_{i\in A_{f_n},\, j \in A_{f_n}^c} \pi_n(i) \cdot \frac{1}{h_n} \, P(X^{(n)}_{h_n} =j \mid X^{(n)}_{0} =i).
\label{equation: an equation which needs to have a name}
\end{equation}
Using~\eqref{eq: hn def}, \eqref{equation: an equation which needs to have a name} can be bounded from above by
\begin{equation*}
\begin{split}
\alpha  \, t^{(n)}_{rel} \cdot \!\!\!\! \sum_{i\in A_{f_n},\, j \in A_{f_n}^c} \!\!\!\!\!\!  \pi_n(i) \cdot \left( q_{ij}^{(n)}+\frac{o(h_n)}{h_n} \right)
&=
\alpha  \, t^{(n)}_{rel} \cdot \!\!\!\!   \sum_{i\in A_{f_n},\, j \in A_{f_n}^c} \!\!\!\!\!\! \pi_n(i) \, q_{ij}^{(n)}
+
\alpha  \, t^{(n)}_{rel} \cdot  \pi(A_{f_n})\cdot \frac{o(h_n)}{h_n} 
\\&=
\alpha  \, t^{(n)}_{rel}  \cdot \pi(A_{f_n}) \cdot \Phi(A_{f_n})
+ 
\alpha  \, t^{(n)}_{rel} \cdot  \pi(A_{f_n})\cdot \frac{o(h_n)}{h_n} .
\end{split}
\end{equation*}
In particular, we have showed that
\begin{equation}
P(f_n(X^{(n)}_0)=1,\,  f_n(X^{(n)}_{\alpha t^{(n)}_{rel}})=0) 
\leq 
\alpha  \, t^{(n)}_{rel}  \cdot \pi(A_{f_n}) \cdot \Phi(A_{f_n})
+ 
\alpha  \, t^{(n)}_{rel} \cdot  \pi(A_{f_n})\cdot \frac{o(h_n)}{h_n} .
\label{equation: upper bound by bottleneck}
\end{equation}
As \( X^{(n)} \) is reversible for each \( n \), 
\begin{equation*}
\begin{split}
P(f_n(X^{(n)}_0) \not = f_n(X^{(n)}_{\alpha t_{rel}}))
&=
P(f_n(X^{(n)}_0)=1,\,  f_n(X^{(n)}_{\alpha t^{(n)}_{rel}})=0)  + 
P(f_n(X^{(n)}_0)=0,\,  f_n(X^{(n)}_{\alpha t^{(n)}_{rel}})=1) 
\\&=
2 P(f_n(X^{(n)}_0)=1,\,  f_n(X^{(n)}_{\alpha t^{(n)}_{rel}})=0) .
\end{split}
\end{equation*}
Using the upper bound from~\eqref{equation: upper bound by bottleneck}, we thus obtain 
\begin{equation*}
\begin{split}
P(f_n(X^{(n)}_0) \not = f_n(X^{(n)}_{t_{rel}}))
&\leq
\alpha  \, t^{(n)}_{rel}  \cdot \pi(A_{f_n}) \cdot \Phi(A_{f_n})
+ 
\alpha  \, t^{(n)}_{rel} \cdot  \pi(A_{f_n})\cdot \frac{o(h_n)}{h_n} .
\end{split}
\end{equation*}
As the second term can be made arbitrarily small by choosing \( h_n \) small, the desired conclusion follows.
\end{proof}

\begin{remark}
This directly shows that if \( G_n \) is the \(n \)-dimensional Hamming cube,  the sequence \( ( f_n)_{n\geq 1} \), where \( f_n (w) = w(1) \),  is noise stable.
\end{remark}

To finish the proof of Proposition~\ref{proposition: bottleneck and noise stability}, it remains to show that given that \( X^{(n)} \) is transitive  and \( \Phi_*^{(n)} \asymp \lambda_1^{(n)} \), there is at least one \emph{nondegenerate} sequence \( (f_n)_{n \geq 1} \) of Boolean functions with \( \Phi (A_{f_n} ) \asymp \Phi_*^{(n)} \).  It is not obvious that such a sequence exists in a more general setting, and the following two examples show that even in the special case of random walks on graphs, some assumption on the graphs \( G_n \), which is stronger than \( G_n \) being regular, is needed to guarantee the existence of a  nondegenerate sequence of Boolean functions which minimizes the bottleneck ratio. In particular, these examples show that Proposition~\ref{proposition: bottleneck and noise stability} is not true for general Markov chains.

 \begin{example}
\label{example: small set bottleneck 2}
Let, as in Example~\ref{example: small set bottleneck}, \( G_n \) be the graph obtained by joining a complete graph on \( n \) vertices to a complete graph on \( n^2  \) vertices by adding a single edge. 
  \begin{figure}[H]
 \centering
 \begin{tikzpicture}[scale = 2]

\def\N{15}
\foreach \i in {0,...,\N}{
  \foreach \j in {\i,...,\N}{
    \draw ({cos(\i*360/(\N+1))},{sin(\i*360/(\N+1))}) -- ({cos((\j+1)*360/(\N+1)},{sin((\j+1)*360/(\N+1)});
  };
};
  \fill ({cos(0*360/(\N+1))},{sin(0*360/(\N+1))})  circle (1.5pt);

\draw[dashed] (1,0) -- (3,0);

\def\M{3}
\def\scaling{0.3}
\foreach \i in {0,...,\M}{
  \foreach \j in {\i,...,\M}{
    \draw ({3+\scaling*(1-cos(\i*360/(\M+1)))},{\scaling*sin(\i*360/(\M+1))}) -- ({3+\scaling*(1-cos((\j+1)*360/(\M+1))},{\scaling*sin((\j+1)*360/(\M+1)});
  };
};

  \fill ({3+\scaling*(1-cos(0*360/(\M+1)))},{\scaling*sin(0*360/(\M+1))})  circle (1.5pt);

\draw (-0.0,-1.3) node {\(K_{n^2}\)};
\draw (3.3,-1.3) node {\(K_n\)};

 \end{tikzpicture}
 \end{figure}
 
We will once more consider the random walks on these graphs. Using the same notation as in Example~\ref{example: small set bottleneck}, we have
\[
|E(K_{n} , K_{n^2})| = 1
\]
and
\[
\vol ( K_{n} ) = n(n-1).
\]
This implies that the continuous time random walk on \( G_n \) with rate \( 1 \) has bottleneck ratio
\begin{equation*}
\Phi_* = \Phi(G_n) = \frac{|E(K_{n} , K_{n^2})| }{\vol (K_n) +1} = \frac{1}{n(n-1)+1} \asymp \frac{1}{n^2}.
\end{equation*}
However,
\[
\pi_n  \left(  K_n \right) = \frac{\vol ( K_n )+1 }{\vol(G_n)} = \frac{n(n-1)+1}{n(n-1)+n^2(n^2-1)+2} \asymp \frac{1}{n^2}
\]
i.e. this example shows that there are sequences of graphs for which the bottleneck ratio is attained by a proportion of the graphs whose measure tends to zero.
 \end{example}

Our next example shows that to assume that graphs in the sequence are regular is not enough to avoid that the bottleneck ratio is attained by an arbitrarily small part of the graph.
\begin{example}
\label{example: small set bottleneck 3}
For each \( n \geq 1 \), let \( G_n^{(1)} \) be the complete graph on \( n+1 \) vertices and \( G_n^{(2)} \) be the Hamming cube \( \mathbb{Z}_{2}^n \). Then all vertices in both \( G_n^{(1)} \) and \( G_n^{(2)} \) have degree \( n \), \( \vol (G_n^{(1)}) =(n+1)\cdot n \asymp n^2 \), \( \vol(G_n^{(2)}) = 2^{n} \cdot n \), \( \Phi_*(G_n^{(1)}) \asymp 1 \) and \( \Phi_*(G_n^{(2)}) \asymp 1/n \). Now let \( G_n \) be the graph obtained by choosing one edge in \( G_n^{(1)} \) and one edge in \( G_n^{(2)} \), removing them both and adding new edges between the ends of the chosen  edges in the respective gaphs such that the degree of each vertex is preserved and \( G_n \) is connected. Then
\[
\Phi_*^{(n)} = 2/(\vol(G_n^{(1)} )+1).
\]
Due to the order of \( \Phi_*(G_n^{(1)}) \) and \( \Phi_*(G_n^{(2)}) \) and the fact that \(\vol (G_n^{(1)})/\vol(G_n) \to 0 \), this sequence of graphs provides examples of regular graphs where the bottleneck ratio is attained by an arbitrarily small proportion of the graph.

  \begin{figure}[H]
 \centering
 \begin{tikzpicture}[scale = 2]

\def\N{4}
\foreach \i in {1,2,3}{
  \foreach \j in {\i,...,3}{
    \draw ({cos(\i*360/(\N+1)+180/(\N+1))},{sin(\i*360/(\N+1)+180/(\N+1))}) -- ({cos((\j+1)*360/(\N+1)+180/(\N+1)},{sin((\j+1)*360/(\N+1)+180/(\N+1)});
  };
};

\foreach \i in {0}{
  \foreach \j in {0,1,2}{
    \draw ({cos(\i*360/(\N+1)+180/(\N+1))},{sin(\i*360/(\N+1)+180/(\N+1))}) -- ({cos((\j+1)*360/(\N+1)+180/(\N+1)},{sin((\j+1)*360/(\N+1)+180/(\N+1)});
  };
};

\foreach \i in {0}{
  \foreach \j in {3}{
    \draw[dotted] ({cos(\i*360/(\N+1)+180/(\N+1))},{sin(\i*360/(\N+1)+180/(\N+1))}) -- ({cos((\j+1)*360/(\N+1)+180/(\N+1)},{sin((\j+1)*360/(\N+1)+180/(\N+1)});
  };
};

\foreach \i in {0,\N}{
  \fill ({cos(\i*360/(\N+1)+180/(\N+1))},{sin(\i*360/(\N+1)+180/(\N+1))})  circle (1.5pt);
};

%\draw (1,0) -- (3,0);

%\draw (1,0) node[above right] {\( i \)};
%\draw (3,0) node[above left] {\( j \)};

\draw[dotted] (3,-0.75) -- (3,0.75);
\draw  (3,0.75) -- (4.5,0.75) -- (4.5,-0.75) -- (3,-0.75);
\draw (4,-0.25) -- (4,1.25) -- (5.5,1.25) -- (5.5,-0.25) -- (4,-0.25);
\draw (3,-0.75) -- (4,-0.25);
\draw (3,0.75) --(4,1.25);
\draw (4.5,0.75) --(5.5,1.25);
\draw (4.5,-0.75)  -- (5.5,-0.25);

\draw (3.8,-0.15) -- (3.8,0.35) -- (4.3,0.35) -- (4.3,-0.15) -- (3.8,-0.15);
\draw (4.1,0.1) --    (4.1,0.6) --   (4.6,0.6) -- (4.6,0.1) -- (4.1,0.1);
\draw (3.8,-0.15) -- (4.1,0.1);
\draw (3.8,0.35) --  (4.1,0.6);
\draw (4.3,0.35) --  (4.6,0.6);
\draw (4.3,-0.15) -- (4.6,0.1);

\draw[gray] (3.8,-0.15) -- (3,-0.75);
\draw[gray]  (3.8,0.35) -- (3,0.75);
\draw[gray]   (4.3,0.35) --(4.5,0.75);
\draw[gray]  (4.3,-0.15) -- (4.5,-0.75);
\draw[gray]  (4.1,0.1) -- (4,-0.25);
\draw[gray]  (4.1,0.6) -- (4,1.25);
\draw[gray]   (4.6,0.6) -- (5.5,1.25);
\draw[gray]  (4.6,0.1) -- (5.5,-0.25);

\fill (3,-0.75) circle  (1.5pt);
\fill (3,0.75) circle  (1.5pt);

\def\i{4}
\draw[dashed] (3,-0.75) -- ({cos(\i*360/(\N+1)+180/(\N+1))},{sin(\i*360/(\N+1)+180/(\N+1))});
\def\i{0}
\draw[dashed] (3,0.75) -- ({cos(\i*360/(\N+1)+180/(\N+1))},{sin(\i*360/(\N+1)+180/(\N+1))});

\draw (-0.0,-1.3) node {\(G_n^{(1)}\)};
\draw (4,-1.3) node {\(G_n^{(2)}\)};
 \end{tikzpicture}
 \end{figure}

\end{example}

To sum up, the examples above shows that even if we can find a sequence of sets \( A_n \) satisfying \( \Phi(A_n ) \asymp \lambda_1^{(n)}\), it is not obvious that we can find such a sequence which is in addition nondegenerate, even given quite strong conditions on the Markov chains, such as it being a random walk on a regular graph. For transitive Markov chains however, the following lemma guareantees the existence of such a sequence.

\begin{lemma}
Let \( X \) be a   transitive, reversible and irreducible continuous time Markov chain with finite state space \( S \) and stationary distribution \( \pi \). Then there is a set \( A \subset S\) with \( \pi(A) \in \left( \frac{1}{4}, \frac{1}{2} \right] \) such that \( \Phi_{*} = \Phi(A) \).
\label{lemma: nondegenericity}
\end{lemma}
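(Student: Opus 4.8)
The plan is to use transitivity twice: first to reduce to the case where $\pi$ is uniform and $\Phi$ is invariant under $\Aut(X)$, and then, via an extremality argument combined with submodularity of the cut function, to rule out the possibility that every minimizer of $\Phi_*$ has mass $\le 1/4$.

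First I would record two elementary consequences of transitivity. Since $X$ is irreducible, its stationary distribution is unique, and for any $\varphi \in \Aut(X)$ the measure $\pi \circ \varphi^{-1}$ is again stationary for $X$; hence $\pi$ is $\Aut(X)$-invariant, and by transitivity $\pi$ is uniform on $S$. Consequently, for every $A \subseteq S$ and every $\varphi \in \Aut(X)$ one has $\pi(\varphi(A)) = \pi(A)$ and $\sum_{i \in \varphi(A),\, j \in \varphi(A)^c} \pi(i) q_{ij} = \sum_{i \in A,\, j \in A^c}\pi(i)q_{ij}$, so $\Phi(\varphi(A)) = \Phi(A)$. I will also use the standard fact that, writing $Q(U) \coloneqq \sum_{i\in U,\, j \in U^c}\pi(i)q_{ij}$ (so that $\Phi(U) = Q(U)/\pi(U)$), the set function $Q$ is submodular, i.e. $Q(U) + Q(V) \ge Q(U\cup V) + Q(U\cap V)$; by reversibility $\pi(i)q_{ij}=\pi(j)q_{ji}$, so $Q(U)$ is just the total weight of edges crossing the cut $(U,U^c)$, and submodularity of the symmetric cut function is checked edge by edge.

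Since $S$ is finite, $\Phi_*$ is attained, so I can choose a set $A^*$ with $\pi(A^*) \le 1/2$, $\Phi(A^*) = \Phi_*$, and $\pi(A^*)$ maximal among all such sets; write $p = \pi(A^*)$. The claim is that $p > 1/4$, which together with $p \le 1/2$ gives the lemma. Suppose instead $p \le 1/4$. Fix any $\varphi \in \Aut(X)$ and set $B = A^* \cup \varphi(A^*)$ and $C = A^* \cap \varphi(A^*)$. Then $\pi(B) \le 2p \le 1/2$ and $\pi(B) + \pi(C) = 2p$, while submodularity and $\Aut(X)$-invariance give $Q(B) + Q(C) \le 2 Q(A^*) = 2\Phi_* p$. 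Since $\pi(C) \le p \le 1/2$, the definition of $\Phi_*$ gives $Q(C) \ge \Phi_* \pi(C)$ (trivially so if $C = \emptyset$), hence $Q(B) \le 2\Phi_* p - \Phi_*\pi(C) = \Phi_*\pi(B)$, i.e. $\Phi(B) \le \Phi_*$ and therefore $\Phi(B) = \Phi_*$. As $B$ is an admissible competitor, maximality of $p$ forces $\pi(B) \le p$; but $\pi(B) \ge \pi(A^*) = p$, so $\pi(B) = p$, which means $\varphi(A^*) \subseteq A^*$ and, since $|\varphi(A^*)| = |A^*|$, that $\varphi(A^*) = A^*$.

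Since $\varphi$ was arbitrary, $A^*$ is invariant under all of $\Aut(X)$; as $\Aut(X)$ acts transitively on $S$, the only invariant subsets are $\emptyset$ and $S$, contradicting $0 < p \le 1/4 < 1$. Hence $p > 1/4$, and $A = A^*$ has the required properties. The step I would be most careful about is exactly this combination step: the naive idea of taking a union of \emph{disjoint} translates of a minimizer fails because disjoint translates need not exist, and the remedy is to allow overlapping translates and control the overlap $C$ through the submodularity bound together with the competitor inequality $Q(C) \ge \Phi_*\pi(C)$, all leveraged against the extremality of $A^*$.
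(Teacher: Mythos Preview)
Your proof is correct and follows essentially the same route as the paper: pick a minimizer of $\Phi_*$ with maximal measure, apply submodularity of the cut function $Q$ to $A^*$ and a translate $\varphi(A^*)$, and derive a contradiction. The only cosmetic difference is the endgame: the paper fixes one $\varphi$ with $\varphi(A)\neq A$ and uses the strict inequality $\Phi(A)<\Phi(A\cup\varphi(A))$ from maximality to force $\Phi(A\cap\varphi(A))<\Phi_*$, contradicting minimality at the intersection, whereas you instead show $\Phi(B)=\Phi_*$ with $\pi(B)=p$ for every $\varphi$, deduce $\varphi(A^*)=A^*$ for all $\varphi$, and finish via transitivity of the action.
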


\begin{proof}
Suppose that the lemma is false. Then for at least one set \(A \subset S \),
\begin{enumerate}
\item[(i)] \( \pi (A) \leq \frac{1}{4} \),
\item[(ii)] \( \Phi(A) = \Phi_* \) and
\item[(iii)] \( \Phi(A') > \Phi(A) \) for all \(A' \subseteq S \) with \( \pi(A) < \pi(A') \leq \frac{1}{2} \).
\end{enumerate}
Let \( \varphi\in \Aut(X) \) be such that \( \varphi(A) \not = A \). As \( \pi (A) < 1 \) and \( X^{(n)} \) is transitive,  at least one such function exists. As and \( \pi(A) \leq \frac{1}{4} \) by (i), we must have that  \( \pi(A \cup \phi(A)) \leq 2\pi(A) \leq \frac{1}{2} \).
By the definition of \( \Phi \), 
\begin{equation*}
  \begin{split}
    \Phi (A \cup \varphi(A)) 
    &=
    \frac{\sum_{\substack{i \in A \cup \varphi (A) \\ j \not \in A \cup \varphi(A)}} \pi(i) q_{ij}}{\pi(A \cup \varphi(A))}
    \\&=
    \frac{
      \sum_{\substack{i \in A \\ j \not \in A }} \pi(i) q_{ij}
      +
      \sum_{\substack{i \in \varphi(A) \\ j \not \in \varphi(A) }} \pi(i) q_{ij}
      -
      \sum_{\substack{i \in A \\ j  \in A^c \cap \varphi(A) }} \pi(i) q_{ij}
      -
      \sum_{\substack{i \in \varphi(A) \\ j  \in A \cap \varphi(A)^c  }} \pi(i) q_{ij}
      -
      \sum_{\substack{i \in A\cap \varphi(A) \\ j \in A^c \cap \varphi(A)^c }} \pi(i) q_{ij}
    }{
      \pi(A) + \pi(\varphi(A)) - \pi (A \cap \varphi(A))
    }
    \\&=
    \frac{
      2\sum_{\substack{i \in A \\ j \not \in A }} \pi(i) q_{ij}
      -
      \sum_{\substack{i \in A \\ j  \in A^c \cap \varphi(A) }} \pi(i) q_{ij}
      -
      \sum_{\substack{i \in \varphi(A) \\ j  \in A \cap \varphi(A)^c  }} \pi(i) q_{ij}
      -
      \sum_{\substack{i \in A\cap \varphi(A) \\ j \in A^c \cap \varphi(A)^c }} \pi(i) q_{ij}
    }{
      2\pi(A) - \pi (A \cap \varphi(A))
    }
    \\&\leq
    \frac{
      2\sum_{\substack{i \in A \\ j \not \in A }} \pi(i) q_{ij}
      -
      \sum_{\substack{i \in A\cap \varphi(A) \\ j  \in A^c \cap \varphi(A) }} \pi(i) q_{ij}
      -
      \sum_{\substack{i \in A \cap \varphi(A) \\ j  \in A \cap \varphi(A)^c  }} \pi(i) q_{ij}
      -
      \sum_{\substack{i \in A\cap \varphi(A) \\ j \in A^c \cap \varphi(A)^c }} \pi(i) q_{ij}
    }{
      2\pi(A) - \pi (A \cap \varphi(A))
    }
    \\ &=
    \frac{
      2\sum_{\substack{i \in A \\ j \not \in A }} \pi(i) q_{ij}
      -
      \sum_{\substack{i \in A\cap \varphi(A) \\ j  \not \in A \cap \varphi(A) }} \pi(i) q_{ij}
    }{
      2\pi(A) - \pi (A \cap \varphi(A))
    }
    \\ &=
    \frac{
      2 \pi(A) \Phi(A)
      -
      \pi(A \cap \varphi(A)) \Phi(A \cap \varphi(A))
    }{
      2\pi(A) - \pi (A \cap \varphi(A))
    }.
  \end{split}
\end{equation*}

As \( \pi(A) < \pi (A \cup \varphi(A)) \leq \frac{1}{2} \), by (iii) we have that
\begin{equation*}
\Phi(A) <   \Phi(A \cup \varphi(A) ) .
\label{equation: maximality assumption}
\end{equation*}
Combining this with the previous equation we obtain
\begin{equation*}
  \begin{split}
\Phi(A)
<
    \frac{
      2 \pi(A) \Phi(A)
      -
      \pi(A \cap \varphi(A)) \Phi(A \cap \varphi(A))
    }{
      2\pi(A) - \pi (A \cap \varphi(A))
    }.
  \end{split}
\end{equation*}
Rearranging, we get
\[
\Phi(A \cap \varphi(A) ) < \Phi(A).
\]
This contradicts (ii), why the conclusion of the lemma follows.

\end{proof}

\begin{proof}[Proof of Proposition~\ref{proposition: bottleneck and noise stability}]
By Lemma~\ref{lemma: nondegenericity}, there exists a nondegenerate sequence of Boolean functions \( (f_n)_{n \geq 1} \) such that \( \Phi(A_{f_n}) \asymp \lambda_1^{(n)} \). By Lemma~\ref{lemma: asymp and noise stable} this sequence is noise stable.
\end{proof}

After now having finished the proof of the main result of this section; Propostion~\ref{proposition: bottleneck and noise stability}, it is natural to ask what will happen if the assumptions of this does not hold. We have already showed what will happen if we lose the transitivity assumption, so what remains to do is to consider what happens if we drop the assumption that \( \Phi_*^{(n)} \asymp \lambda_1^{(n)} \). In the following example we will give an example of a sequence of random walks on graphs which shows that in this setting, a sequence of sets \( (A_n)_{n \geq 1} \) with \( \Phi_*(A_n) \asymp \Phi_*^{(n)} \) can be noise stable. Unfortunately, I have yet not found a Markov chain with a sequence of sets \( (A_n)_{n \geq 1 } \) which satisfies that \( \Phi_*(A_n) \asymp \Psi_*^{(n)} \not \asymp \lambda_1^{(n)} \) such that \( (\mathbf{1}_{A_n} )_{n \geq 1} \) is nondegenerate but not noise stable, neither with nor without the additional transitivity assumption.

\begin{example}
Let \( G_n  \) be the graph with vertices labeled by the integers \( 0, 1 , \ldots, 2n-1 \) and with an ende between two vertices if their label diiffer by 1 modulo \( 2n \). The random walk \( X^{(n)} \) on this graph has spectral gap \( \lambda_1^{(n)} = 1/n^2\) and bottleneck ratio \( \Phi_*^{(n)} = 1/n \) achieved by e.g. the set of vertices \( A_n \) labeled by \( \{ 0,1, \ldots, n-1 \} \). It is easy to show that the sequence \( (\mathbf{1}_{A_n})_{n \geq 1} \) is noise stable with respect to \( (X^{(n)})_{n\geq 1} \), even though \( \Phi_*^{(n)} \not \asymp \lambda_1^{(n))} \).
\end{example}

Except in special cases, the definition of the bottleneck ratio does not give us much direct information about noise stability, as, assuming that  \( A \subseteq S \) and writing \( f = \mathbf{1}_A \) 
\begin{equation*}
\begin{split}
\Phi(A) 
&= 
\frac{ \sum_{i \in A ,\, j \not \in A} \pi(i) q_{ij}}{\pi(A)}
=
\frac{1}{2} \cdot \frac{ \sum_{i,j \in S} \pi(i) q_{ij} \cdot (f(i)- f(j))^2}{\E [f]}
\\&=
 \frac{ \sum_{i \in S} \pi(i) f(i) \cdot \sum_{j \in S} q_{ij} \cdot (f(i)- f(j))}{\E [f^2]}
= 
\frac{\langle f, -Qf \rangle}{\langle f, f \rangle }
\\&=   
\frac{\sum_i \lambda_i \hat f (i)^2}{\sum_i \hat f(i)^2}
=   
\sum_i \lambda_i \cdot \frac{\hat f(i)^2}{\sum_j \hat f(j)^2} 
= 
\E_{f} [\lambda]
\end{split}
\end{equation*}
where 
\[ P_{f} (\lambda = \lambda_i) = { \frac{\hat f(i)^2}{\sum_j \hat f(j)^2}}  .
\]
If \( A_*^{(n)} \) minimizes the bottleneck ratio of \( X^{(n)} \), and we want to show that the sequence \( (f_n)_{n \geq 1} \), where \( f_n(w) = \mathbf{1}_{w \in A_*^{(n)}}(w) \), is not noise stable, we need to show that  
for all \( \delta> 0 \) there is \( k = k_\delta \) such that \( \sup_n P_{f_n} (\lambda > k \lambda_1^{(n)}) < \delta \). This is neither implied nor not implied by the fact that \( \mathbb{E}_{f_n} [\lambda] = \omega( \lambda_1) \). 
Moreover, we cannot even conclude that \( f \) is not noise sensitive, as this would require that \( \lim_{n \to \infty } P_{f_n} (\lambda<k\lambda_1^{(n)}) = 0 \) for at least one value of \( k \).
The only conclusion one can draw given that \( \Phi_*^{(n)} \not \asymp \lambda_1^{(n)} \) is that  there can be no sequence \( ( f_n )_{n \geq 1 } \) of Boolean functions in \( (\Span_{i \colon \lambda_i^{(n)} < k \lambda_i^{(n)}} \psi_i^{(n)} )_{n \geq 1 }\) for any \( k \), as we would then have
\[
\Phi_*^{(n)} =\E_{f_n} [\lambda]\leq  k\lambda_1
\]
which contradicts that \( \Phi_*^{(n)} \not \asymp \lambda_1^{(n)}  \).

\bibliographystyle{plain}
\bibliography{references}

\clearpage
\appendix

\section{Technical lemmas}

\begin{lemma}
For each \( n \in \mathbb{N} \), let \( F_n \) be a collection of increasing functions and suppose that  
\begin{equation}\label{equation: inflimsupsup}
\inf_{(f_n)_{n\geq 1} \colon f_n \in F_n} \limsup_{x \to 0}\sup_n f_n(x) \geq \varepsilon.
\end{equation}
Then there is arbitrarily small \( x \) and \( n(x) \in \mathbb{N}\) such that \( f_{n(x)}(x) \geq \varepsilon/2 \) for all \( f_{n(x)} \in F_{n(x)} \).
\label{lemma: technical lemma on the combination of limits and sups}

\end{lemma}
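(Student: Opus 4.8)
The plan is to argue by contraposition on the quantifier structure. Suppose the conclusion fails: then for every $x$ small (equivalently, for every $x$ in some sequence $x_j \to 0$) and every candidate index $n$, there is at least one function $f_n \in F_n$ with $f_n(x) < \varepsilon/2$. I would use this to construct a single sequence $(f_n)_{n \geq 1}$, $f_n \in F_n$, witnessing $\limsup_{x \to 0} \sup_n f_n(x) < \varepsilon$, contradicting~\eqref{equation: inflimsupsup}. The subtlety is that the ``bad'' function may depend on both $x$ and $n$, whereas the infimum in~\eqref{equation: inflimsupsup} ranges over sequences indexed only by $n$; this is exactly the place where monotonicity (the functions in $F_n$ are increasing) does the work, since it lets a single small $x$ control the behaviour on the whole interval $(0,x]$.

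Concretely, first I would fix a sequence $x_j \downarrow 0$. Negating the conclusion, for each $j$ and each $n$ there is $f_n^{(j)} \in F_n$ with $f_n^{(j)}(x_j) < \varepsilon/2$. Since $f_n^{(j)}$ is increasing, $f_n^{(j)}(x) \le f_n^{(j)}(x_j) < \varepsilon/2$ for all $x \le x_j$. Now, for the contradiction I only need one sequence in $n$: for each fixed $j$, set $g_n := f_n^{(j)}$ (for all $n$), giving a sequence $(g_n)_{n \ge 1}$ with $g_n \in F_n$ and $\sup_n g_n(x) \le \varepsilon/2$ for all $x \le x_j$; hence $\limsup_{x \to 0} \sup_n g_n(x) \le \varepsilon/2 < \varepsilon$. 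This already contradicts~\eqref{equation: inflimsupsup}, since the left-hand side of that display is an infimum over exactly such sequences. So no diagonal construction is actually needed: a single value of $j$ suffices once the conclusion is negated.

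The one point to be careful about is the precise reading of ``arbitrarily small $x$'': I would phrase the conclusion as ``for every $\eta > 0$ there is $x \in (0,\eta)$ and $n(x) \in \mathbb{N}$ with $f_{n(x)}(x) \ge \varepsilon/2$ for all $f_{n(x)} \in F_{n(x)}$.'' Negating this gives: there is $\eta > 0$ such that for every $x \in (0,\eta)$ and every $n$, some $f_n \in F_n$ has $f_n(x) < \varepsilon/2$. Pick any $x \in (0,\eta)$; for that fixed $x$ choose, for each $n$, such an $f_n^x \in F_n$, and let $(g_n)_{n \ge 1} = (f_n^x)_{n \ge 1}$. Then $\sup_n g_n(x') \le \sup_n g_n(x) \le \varepsilon/2$ for all $x' \le x$ by monotonicity, so $\limsup_{x' \to 0}\sup_n g_n(x') \le \varepsilon/2$, contradicting~\eqref{equation: inflimsupsup}.

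The main obstacle — really the only thing requiring thought — is getting the order of the quantifiers exactly right when passing from the negated conclusion (where the chosen bad function depends on $x$ as well as $n$) to a sequence eligible for the infimum in the hypothesis (which may depend on $n$ only). Monotonicity of the functions in $F_n$ resolves this cleanly by letting a single small $x$ dominate the whole tail $(0,x]$; without it the statement would be false. Everything else is bookkeeping.
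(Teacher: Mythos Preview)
Your proof is correct and follows essentially the same route as the paper's: negate the conclusion, fix a single small $x_0$, choose for each $n$ a bad function $f_n \in F_n$ with $f_n(x_0) < \varepsilon/2$, and use monotonicity to conclude $\limsup_{x\to 0}\sup_n f_n(x) \le \varepsilon/2 < \varepsilon$, contradicting~\eqref{equation: inflimsupsup}. Your treatment of the quantifier negation (``there is $\eta>0$ such that for all $x\in(0,\eta)$ \ldots'') is in fact slightly more careful than the paper's, but the argument is otherwise identical.
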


\begin{proof}
Suppose that the conclusion of the lemma is false. Then for each \( x_0 > 0 \) and each \( n \geq 1 \) there is at least one function \( f_{n,x_0} \in F_{n} \) such that \( f_{n,x_0}(x_0) < \varepsilon/2 \) As each such function is increasing, we must in fact have that \(f_{n,x_0}(x) < \varepsilon/2 \) for all \( x<x_0 \). This implies that
\begin{equation*}
\begin{split} 
\inf_{(f_n)_{n\geq 1} \colon f_n \in F_n} \limsup_{x \to 0}\sup_n f_n(x)  &\leq \limsup_{x \to 0}\sup_n f_{n,x_0}(x) 
\leq \limsup_{x \to 0}\sup_n f_{n,x_0}(x_0) 
\\&\leq \sup_n f_{n,x_0}(x_0) 
\\&< \varepsilon/2.
\end{split}
\end{equation*}
As this contradicts~\eqref{equation: inflimsupsup}, the desired conclusion follows.
\end{proof}

\begin{lemma}
Let  \( A \subseteq S\) be a set with \( 0 < P(A) < 1 \). Then for any function  \( \psi \colon S \to \mathbb{R} \) with \( \E[\psi^2(w)]=1 \) and \( \E[\psi(w)]=0\),
\[
\E[\psi^{(n)}(w) \mid w \in A]^2 \leq \frac{P(w \not \in A)}{P(w \in A)^2} .
\]
\label{lemma: delocalization around zero}
\end{lemma}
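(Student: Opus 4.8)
The plan is to exploit the mean-zero hypothesis to transfer the conditional expectation over $A$ into an expectation over the complement $A^c$, where the indicator has small $L^2$-mass, and then apply Cauchy--Schwarz. Concretely, write $p \coloneqq P(w \in A) \in (0,1)$. Since $\E[\psi(w)] = 0$, splitting the expectation over $A$ and $A^c$ gives
\[
0 = \E[\psi(w)] = \E[\psi(w)\mathbf{1}_{w \in A}] + \E[\psi(w)\mathbf{1}_{w \in A^c}],
\]
so that $\E[\psi(w)\mathbf{1}_{w \in A}] = -\E[\psi(w)\mathbf{1}_{w \in A^c}]$. Consequently
\[
\E[\psi(w) \mid w \in A] = \frac{\E[\psi(w)\mathbf{1}_{w \in A}]}{p} = -\frac{\E[\psi(w)\mathbf{1}_{w \in A^c}]}{p}.
\]

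Next I would bound the numerator by Cauchy--Schwarz with respect to $\langle \cdot,\cdot\rangle$: using $\E[\psi(w)^2]=1$,
\[
\bigl| \E[\psi(w)\mathbf{1}_{w \in A^c}] \bigr| \leq \sqrt{\E[\psi(w)^2]} \cdot \sqrt{\E[\mathbf{1}_{w \in A^c}^2]} = \sqrt{P(w \notin A)}.
\]
Combining the last two displays and squaring yields
\[
\E[\psi(w) \mid w \in A]^2 \leq \frac{P(w \notin A)}{p^2} = \frac{P(w \notin A)}{P(w \in A)^2},
\]
which is the claim. (The same argument applied to the statement before squaring also gives the companion bound with $A$ and $A^c$ interchanged; the point of routing through $A^c$ is that in the intended application $P(w \in A)$ is close to $1$, so $P(w \notin A)$ is the small quantity, making this the sharp form.)

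I do not anticipate any real obstacle here: the only thing to be mindful of is that the inner product and all expectations are taken with respect to $\pi_n$ (so $\mathbf{1}_{w\in A^c}$ has squared norm $\pi_n(A^c) = P(w \notin A)$, not $1$), and that one must use the mean-zero hypothesis rather than bounding $\E[\psi\mathbf{1}_A]$ directly, which would only give the weaker $1/P(w\in A)$.
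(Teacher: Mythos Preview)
Your proof is correct and follows essentially the same route as the paper: use $\E[\psi]=0$ to rewrite the conditional mean on $A$ in terms of the complement, then bound via the second moment. The only cosmetic difference is that you apply Cauchy--Schwarz to $\E[\psi\,\mathbf{1}_{A^c}]$, whereas the paper applies Jensen's inequality to $\E[\psi\mid w\notin A]^2\le \E[\psi^2\mid w\notin A]$ and then uses $P(w\notin A)\,\E[\psi^2\mid w\notin A]\le \E[\psi^2]=1$; these two steps are equivalent here.
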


\begin{proof}
Note first that
\begin{equation*}
0 = \E[\psi(w)] 
= \E[\psi(w) \mid w \in A] \cdot  P(w \in A ) 
+ \E[\psi(w) \mid w  \not \in A ] \cdot  P(w \not \in A ) .
\end{equation*}
Rewriting this, we obtain
\[
 \E[\psi^{(n)}(w) \mid w \not \in A] = - \frac{P(w \in A) }{P(w \not \in A) } \cdot \E[\psi(w) \mid w \in A].
\]
This in turn implies that
\begin{equation*}
\begin{split}
1&=\E[\psi(w)^2]
\geq
P(w \not \in A) \cdot  \E[\psi(w)^2 \mid w \not \in A] 
\geq
P(w \not \in A) \cdot  \E[\psi(w) \mid w \not \in A]^2
\\&=
P(w \not \in A) \cdot \left(\frac{P(w \in A) }{P(w \not \in A ) } \cdot \E[\psi(w) \mid w \in A] \right)^2
=
\frac{P(w \in A)^2 }{P(w \not \in A ) } \cdot \E[\psi^{(n)}(w) \mid w \in A]^2.
\end{split}
\end{equation*}
Rearranging, we obtain the desired equation.
\end{proof}

\begin{lemma}
For any positive decreasing function \( f\colon \mathbb{R}_+ \to \mathbb{R}_+ \) and any constants \( A \geq a  > 0 \),
\[
\frac{\int_0^{a/2} f(\theta) d\theta}{\int_0^A f(\theta) d\theta} \leq \frac{ f(0)}{f(0)+ f(a)}.
\] 
\label{lemma: upper bound on measure}
\end{lemma}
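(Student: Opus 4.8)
The plan is to prove the inequality by sandwiching the ratio between crude monotonicity bounds on $f$, using nothing beyond the positivity and monotonicity of $f$.

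\textbf{Step 1: bound the numerator above.} Since $f$ is decreasing, $f(\theta) \le f(0)$ for all $\theta \in [0, a/2]$, so
\[
\int_0^{a/2} f(\theta)\, d\theta \le \frac{a}{2}\, f(0).
\]
In particular $N \coloneqq \int_0^{a/2} f(\theta)\, d\theta$ is finite, and it is strictly positive because $f > 0$.

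\textbf{Step 2: bound the denominator below.} Because $A \ge a$ and $f > 0$,
\[
\int_0^A f(\theta)\, d\theta \ge \int_0^a f(\theta)\, d\theta = N + \int_{a/2}^a f(\theta)\, d\theta \ge N + \frac{a}{2}\, f(a),
\]
where the last inequality uses $f(\theta) \ge f(a)$ on $[a/2, a]$. Hence
\[
\frac{\int_0^{a/2} f(\theta)\,d\theta}{\int_0^A f(\theta)\,d\theta} \le \frac{N}{N + \tfrac{a}{2} f(a)}.
\]

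\textbf{Step 3: push the numerator bound through a monotone map.} For any fixed $c > 0$ the function $x \mapsto x/(x+c)$ is nondecreasing on $[0,\infty)$; applying this with $c = \tfrac{a}{2} f(a) > 0$ together with the bound $N \le \tfrac{a}{2} f(0)$ from Step 1 gives
\[
\frac{N}{N + \tfrac{a}{2} f(a)} \le \frac{\tfrac{a}{2} f(0)}{\tfrac{a}{2} f(0) + \tfrac{a}{2} f(a)} = \frac{f(0)}{f(0) + f(a)},
\]
which is the desired inequality.

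There is no genuine obstacle here: the argument is three lines of elementary estimates. The only points deserving a remark are that all integrals appearing are finite and strictly positive — immediate from $0 < f(\theta) \le f(0)$ together with monotonicity — and that $f(a) > 0$, so none of the denominators vanish, which is what legitimizes invoking monotonicity of $x \mapsto x/(x+c)$.
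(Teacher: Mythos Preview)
Your proof is correct and is essentially the same as the paper's: both bound the denominator below by $N + \tfrac{a}{2}f(a)$ using $f(\theta)\ge f(a)$ on $[a/2,a]$, then replace $N$ by $\tfrac{a}{2}f(0)$ via the monotonicity of $x\mapsto x/(x+c)$. The only difference is that you make the monotonicity step explicit, whereas the paper performs it in one line.
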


\begin{proof}
\begin{equation*}
\begin{split}
\frac{\int_0^{a/2} f(\theta) d\theta}{\int_0^A f(\theta) d\theta} 
&= 
\frac{\int_0^{a/2} f(\theta) d\theta}{\int_0^{a/2} f(\theta) d\theta + \int_{a/2}^a f(\theta) d\theta + \int_a^A f(\theta) d\theta}
\\&\leq
\frac{\int_0^{a/2} f(\theta) d\theta}{\int_0^{a/2} f(\theta) d\theta + \frac{a}{2}\cdot f(a) + 0}
\\&\leq
\frac{\frac{a}{2}\cdot f(0)}{\frac{a}{2}\cdot f(0)+ \frac{a}{2}\cdot f(a) + 0}
\\&=
\frac{ f(0)}{f(0)+ f(a)}.
\end{split}
\end{equation*}
\end{proof}

\begin{lemma}
Let \( \Psi \) be an eigenspace of a transitive,  reversible and irreducible continuous time Markov chain \( X \). Then for any \( w \in S \),
\[
\sup_{\psi \in \Psi \colon \langle \psi, \psi \rangle = 1} \psi(w)^2 = \dim \Psi.
\]
\label{lemma: maximal value}
\end{lemma}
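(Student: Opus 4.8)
The plan is to compute the quantity $\sup_{\psi \in \Psi,\, \langle\psi,\psi\rangle = 1} \psi(w)^2$ by relating it to the diagonal entry of the orthogonal projection onto $\Psi$ and then using transitivity to average over the state space. First I would let $P_\Psi$ denote the $\langle\cdot,\cdot\rangle$-orthogonal projection onto $\Psi$, and observe that for fixed $w$, the functional $\psi \mapsto \psi(w)$ on $\Psi$ is linear, hence by Riesz representation there is a unique $\eta_w \in \Psi$ with $\psi(w) = \langle \psi, \eta_w\rangle$ for all $\psi\in\Psi$. By Cauchy--Schwarz, $\sup_{\langle\psi,\psi\rangle=1}\psi(w)^2 = \langle\eta_w,\eta_w\rangle = \eta_w(w)$, the last equality because $\eta_w\in\Psi$ so $\eta_w(w) = \langle\eta_w,\eta_w\rangle$. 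One then identifies $\eta_w(w)$ with the ``diagonal entry'' $K_\Psi(w,w)$ of the reproducing kernel of $\Psi$: if $\{\chi_j\}_{j=1}^{m}$ is any orthonormal basis of $\Psi$ (with $m = \dim\Psi$), then $\eta_w = \sum_j \chi_j(w)\,\chi_j$, so $\eta_w(w) = \sum_{j=1}^m \chi_j(w)^2$.

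**Using transitivity.** The key point is now that $\sum_{w\in S}\pi(w)\sum_{j=1}^m \chi_j(w)^2 = \sum_{j=1}^m \langle\chi_j,\chi_j\rangle = m$, so the average of $\eta_w(w)$ against $\pi$ equals $m = \dim\Psi$. It therefore suffices to show $\eta_w(w)$ is the \emph{same} for every $w\in S$; since it averages to $m$, it must then equal $m$ everywhere. For this I would invoke transitivity: given $w, w' \in S$, pick $\varphi \in \Aut(X)$ with $\varphi(w) = w'$. Because $\varphi$ preserves the generator $Q$ (and hence, being an automorphism of a finite irreducible chain, preserves the stationary distribution $\pi$), composition $\psi \mapsto \psi\circ\varphi^{-1}$ is a linear isometry of $L^2(\pi)$ that maps the eigenspace $\Psi$ onto itself. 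Consequently it permutes unit vectors of $\Psi$, and
\[
\sup_{\psi\in\Psi,\,\langle\psi,\psi\rangle=1}\psi(w')^2
= \sup_{\psi\in\Psi,\,\langle\psi,\psi\rangle=1}\psi(\varphi(w))^2
= \sup_{\psi'\in\Psi,\,\langle\psi',\psi'\rangle=1}\psi'(w)^2,
\]
where in the last step I substituted $\psi' = \psi\circ\varphi$, which again ranges over all unit vectors of $\Psi$. Hence $\eta_w(w)$ is constant in $w$, and combining with the averaging identity gives $\eta_w(w) = m = \dim\Psi$ for all $w$, which is the claim.

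**Main obstacle.** The only genuinely delicate point is verifying that an automorphism $\varphi \in \Aut(X)$ really does induce an isometry of $L^2(\pi)$ that preserves $\Psi$ — i.e. that $\varphi$ preserves $\pi$ and commutes with $Q$ (equivalently with $H_t$), so that it maps eigenvectors of $-Q$ to eigenvectors with the same eigenvalue. Preservation of $\pi$ follows because $\pi\circ\varphi^{-1}$ is again a stationary distribution of the irreducible chain $X$, which has a unique stationary distribution; commutation with $Q$ is immediate from $q_{\varphi(i)\varphi(j)} = q_{ij}$. Everything else (Riesz representation on the finite-dimensional space $\Psi$, the Cauchy--Schwarz evaluation of the sup, the reproducing-kernel identity, and the trace computation) is routine. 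I would also remark that this lemma makes no use of a connectedness or graph structure beyond transitivity, reversibility and irreducibility, which is exactly the generality in which it is invoked in the proof of Proposition~\ref{proposition: vertex transitive functions}.
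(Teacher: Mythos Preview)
Your proof is correct and follows the same overall architecture as the paper's: identify $\sup_{\langle\psi,\psi\rangle=1}\psi(w)^2$ with $\sum_j \chi_j(w)^2$ for an orthonormal basis $\{\chi_j\}$ of $\Psi$, use transitivity to show this quantity is independent of $w$, and average against $\pi$ to obtain $\dim\Psi$. The one substantive difference is in how the supremum is computed. The paper parametrizes the unit sphere of $\Psi$ in polar coordinates and proves by a trigonometric induction that the maximum of $\psi(w)^2$ over this sphere equals $\chi_1(w)^2+\cdots+\chi_m(w)^2$. Your Riesz/Cauchy--Schwarz argument (equivalently, the reproducing-kernel identity $\eta_w=\sum_j\chi_j(w)\chi_j$) reaches the same conclusion in one line and is the more standard and transparent route; it also makes clear that the result holds for any finite-dimensional subspace invariant under $\Aut(X)$, not just an eigenspace. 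Your explicit justification that $\varphi\in\Aut(X)$ preserves $\pi$ (via uniqueness of the stationary distribution) and hence acts isometrically on $L^2(\pi)$ is a detail the paper leaves implicit but uses when it writes $\sum_{w}\chi_i(w)^2=|S|\langle\chi_i,\chi_i\rangle$.
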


\begin{proof}
Let \( \chi_1\), \ldots, \( \chi_{\dim \Psi} \) be an orthonormal basis for \( \Psi \). Then any \( \psi \in \Psi \) of length one can be written as 
\[
\psi[\theta_1, \ldots, \theta_{\dim \Psi -1}] = \Bigl( \ldots \bigl( (\chi_1 \sin \theta_1 + \chi_2 \cos \theta_1) \cdot \sin \theta_2 + \chi_3 \cos \theta_2 \bigr) \cdot \sin \theta_3 + \ldots \Bigr)
\]
where \( (\theta_1, \ldots, \theta_{\dim \Psi}) \) are the polar coordinates of \( \psi \).

We now claim that, from this representation of \( \psi \in \Psi \), it follows that for any \( w \in S \), \( \psi(w)^2 \) is maximized by 
\[
M_w \coloneqq \chi_1(w)^2 + \ldots + \chi_{\dim \Psi}(w)^2.
\]
To see this, note first that for \( \dim \Psi = 2 \), we have that
\[
\psi_1[\theta_1]  \coloneqq \psi[\theta_1] = \chi_1  \sin \theta_1 + \chi_2 \cos \theta_2 = \sqrt{\chi_1^2 + \chi_2^2} \cos (\theta_1 - \arctan \chi_2/\chi_1).
\]
This is clearly maximized by \( \sqrt{\chi_1^2 + \chi_2^2 } \) when \( \theta_1 = \arctan \chi_2/\chi_1 \). This provides the first step for an argument by induction. To see that this holds in general, define recursively
\[
\psi_m[\theta_1, \ldots, \theta_m] = \psi_{m-1}[\theta_1, \ldots, \theta_{m'-1}] \sin \theta_m + \chi_{m+1} \cos \theta_m
\]
and assume that the claim holds for\( m \in \{ 1,2, \ldots, m'-1 \} \). Then
\[
\psi_m[\theta_1, \ldots, \theta_{m'}] = \sqrt{\psi_{m'-1}^2[\theta_1, \ldots, \theta_{m'-1}] + \chi_{m}^2} \cos (\theta_{m'} - \arctan \chi_m/\psi_{m'-1}[\theta_1, \ldots, \theta_{m'-1}])
\]
is maximized by \( \sqrt{\psi_{m'-1}^2[\theta_1, \ldots, \theta_{m'-1}] + \chi_{m}^2} \) when \( \theta_{m'} = \arctan \chi_m/\psi_{m'-1}[\theta_1, \ldots, \theta_{m'-1}] \) for any choice of \( \theta_1 \), \ldots, \( \theta_{m'-1} \). This finishes the proof of the claim.

Now note that for any \( \psi \in \Psi \)   with corresponding eigenvalue \( \lambda \), and any \( \varphi \in \Aut(X) \),
\[
Q\circ \psi(\varphi(i)) = \sum_{j \in S} q_{ij} \psi(\varphi(j)) = \sum_{j \in S} q_{\varphi(i)\varphi(j)} \psi(\varphi(j)) = \sum_{j \in S} q_{\varphi(i)j} \psi(j) = \lambda \psi(\varphi(i)) 
\]
i.e. \( \psi \cdot \varphi  \Psi\). From this it follows that the maximum \( M \) must be the same for each \( w \in S^{(n)} \). This implies that
\begin{equation*}
\begin{split}
|S| \cdot M &=  \sum_{w \in S} \chi_1(w)^2 + \ldots + \chi_{\dim \Psi}(w)^2 
\\&= \sum_{w \in S}\sum_{i = 1}^{\dim \Psi} \chi_i(w)^2 
= \sum_{i = 1}^{\dim \Psi} \sum_{w \in S} \chi_i(w)^2
\\&= |S|\sum_{i = 1}^{\dim \Psi} \sum_{w \in S} \pi(w) \chi_i(w)^2
= |S|\sum_{i = 1}^{\dim \Psi} \langle \chi_i, \chi_i \rangle
\\&= |S|\sum_{i = 1}^{\dim \Psi} 1
= |S| \cdot \dim \Psi.
\end{split}
\end{equation*}
\end{proof}

\begin{lemma}
For any fixed \( u \), the functions \( \{ \psi_{i,u}\}_{i\colon  \psi_i \in \Psi} \), where  \( \psi_{i,u} (w) = \psi_i(w_u) \), are an orthonormal basis for the eigenspace spanned by \( \{ \psi_i \}_{i\colon  \psi_i \in \Psi} \)
\end{lemma}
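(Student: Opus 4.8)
The plan is to exploit that the map $w \mapsto w_u$ is a symmetry of the chain, so that precomposition by it is a linear isometry of the space of functions on $S$ which preserves the eigenspace $\Psi$; any such map carries an orthonormal basis of $\Psi$ to an orthonormal basis of $\Psi$, and since $\{\psi_i\}_{i \colon \psi_i \in \Psi}$ is an orthonormal basis of $\Psi$ this is exactly the assertion. Write $\sigma$ for the bijection $S \to S$ given by $w \mapsto w_u$; by construction $\sigma \in \Aut(X)$, i.e. $q_{\sigma(i)\sigma(j)} = q_{ij}$ for all $i,j \in S$, and $\psi_{i,u} = \psi_i \circ \sigma$.

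First I would record that $\sigma$ preserves the stationary distribution, i.e. $\E[g(\sigma(w))] = \E[g(w)]$ for every $g \colon S \to \mathbb{R}$. Since $X$ is irreducible its stationary distribution is unique, and from $q_{\sigma(i)\sigma(j)} = q_{ij}$ one checks directly that $\pi \circ \sigma^{-1}$ is again stationary, whence $\pi \circ \sigma^{-1} = \pi$; this gives the claimed change-of-variables identity. (For transitive chains $\pi$ is uniform and the identity is immediate.)

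Next I would show $\psi_{i,u} \in \Psi$. If $\Psi$ is the eigenspace with eigenvalue $\lambda$, then exactly as in the proof of Lemma~\ref{lemma: maximal value},
\[
Q(\psi_i \circ \sigma)(j) = \sum_{k \in S} q_{jk}\, \psi_i(\sigma(k)) = \sum_{k \in S} q_{\sigma(j)\sigma(k)}\, \psi_i(\sigma(k)) = \sum_{k' \in S} q_{\sigma(j)k'}\, \psi_i(k') = \lambda\, \psi_i(\sigma(j)),
\]
where the third equality reindexes the sum by $k' = \sigma(k)$, using that $\sigma$ is a bijection. Hence $\psi_i \circ \sigma$ lies in the same eigenspace, i.e. $\psi_{i,u} \in \Psi$.

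Finally, orthonormality follows from the $\pi$-invariance of $\sigma$: $\langle \psi_{i,u}, \psi_{j,u} \rangle = \E[\psi_i(\sigma(w))\psi_j(\sigma(w))] = \E[\psi_i(w)\psi_j(w)] = \langle \psi_i, \psi_j \rangle = \delta_{ij}$. Thus $\{\psi_{i,u}\}_{i \colon \psi_i \in \Psi}$ is an orthonormal family of $\dim \Psi$ vectors contained in $\Psi$, and is therefore an orthonormal basis of $\Psi$. The only step requiring any care is the invariance of $\pi$ under $\sigma$, which is a one-line consequence of uniqueness of the stationary distribution; the rest is bookkeeping already carried out for Lemma~\ref{lemma: maximal value}.
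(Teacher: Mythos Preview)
Your proof is correct and follows essentially the same approach as the paper: both establish orthonormality via the identity $\langle \psi_{i,u},\psi_{j,u}\rangle = \langle \psi_i,\psi_j\rangle$ and then show each $\psi_{j,u}$ is an eigenvector with the same eigenvalue, using that $\sigma\colon w\mapsto w_u$ lies in $\Aut(X)$. Your argument is in fact somewhat more explicit than the paper's, since you spell out the $\pi$-invariance of $\sigma$ and verify the eigenvector property by the direct calculation already used in Lemma~\ref{lemma: maximal value}, whereas the paper checks it by expanding $-Q\psi_{j,u}$ in the shifted basis $\{\psi_{k,u}\}_k$.
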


\begin{proof}
As 
\begin{equation*}
\langle \psi_{i,u}, \psi_{j,u}\rangle = \langle \psi_{i}, \psi_{j}\rangle
\end{equation*}
it is immediately clear that \( \{ \psi_{i,u} \}_{i\colon  \psi_i \in \Psi} \) is an orthonormal set, so it remains to show that \( \psi_{j,u} \in \Span \{ \psi_i \}_{i\colon  \psi_i \in \Psi} \) for any \( j \) with \(\psi_j \in \Psi \). To obtain this result, it is enough to show that \( \psi_{j,u} \) is an eigenvector of \( -Q \) with eigenvalue \( \lambda_j \). This follows as
\begin{equation*}
\begin{split}
L \psi_{j,u} 
&= 
\sum_{k=0}^{|X_n|-1} \langle -Q \psi_{j,u}, \psi_{k,u} \rangle \psi_{k,u}
=
\sum_{k=0}^{|X_n|-1}  \langle -Q \psi_{j}, \psi_{k} \rangle \psi_{k,u}
\\&=
\sum_{k=0}^{|X_n|-1}  \langle \lambda_j \psi_{j}, \psi_{k} \rangle \psi_{k,u}
=
\sum_{k=0}^{|X_n|-1}  \lambda_j \langle \psi_{j}, \psi_{k} \rangle \psi_{k,u}
=
\lambda_j \psi_{j,u}.
\end{split}
\end{equation*}
\end{proof}

\end{document}